\documentclass{amsart}
\calclayout
\usepackage{amssymb,amsmath,amsfonts,epsfig,latexsym,tikz}
\usepackage[alphabetic, initials]{amsrefs}
\usepackage{tikz-cd}
\usepackage{hyperref}
\usepackage{enumerate}
\usepackage{mathtools}
\usepackage{verbatim}
\usepackage{cleveref}
\usepackage[shortlabels]{enumitem}
\usepackage{subcaption}
\usepackage{ mathrsfs }

\usetikzlibrary{positioning}
\usetikzlibrary{matrix}
\usetikzlibrary{decorations}
\usetikzlibrary{decorations.pathreplacing, decorations.pathmorphing, angles,quotes}

\newtheorem{theorem}{Theorem}[section]

\newtheorem{proposition}[theorem]{Proposition}
\newtheorem{lemma}[theorem]{Lemma}
\newtheorem{corollary}[theorem]{Corollary}
\newtheorem{conjecture}[theorem]{Conjecture}

\def\N{\mathbb{N}}

\def\R{\mathbb{R}}

\def\Z{\mathbb{Z}}
\def\K{\mathcal{K}}

\def\1{\mathbf{1}}

\def\<{\langle}
\def\>{\rangle}

\DeclareMathOperator{\sgn}{sgn}

\DeclareMathOperator{\Supp}{Supp}

\DeclareMathOperator{\ann}{ann}

\DeclareMathOperator{\HR}{HR}

\newcommand{\Conv}[1]{\operatorname{Conv}\left\{{#1}\right\}}

\theoremstyle{definition}
\newtheorem{remark}[theorem]{Remark}
\newtheorem{example}[theorem]{Example}

\begin{document}
	
	\title[]{Positivity properties of Schur classes}
	\date{\today}
	\author{Matt Larson and Alan Stapledon}

\address{Princeton University and the Institute for Advanced Study}
\email{mattlarson@princeton.edu}

\address{Sydney Mathematics Research Institute}

	\email{astapldn@gmail.com}
\begin{abstract}
Results of Fulton--Lazarsfeld and Ross--Toma show that Schur classes of ample and nef vector bundles have remarkable positivity properties. We generalize these results to a purely algebraic setting. We show that if a projective bundle ring, a ring which resembles the cohomology ring of a projective bundle over a smooth complex projective variety, has the K\"{a}hler package with respect to a suitable cone, then the associated Schur classes satisfy a version of the Hodge--Riemann relations. This result is new even for the projectivization of an ample vector bundle over a smooth complex projective variety. We apply this result to prove that Schur coefficients of matroids are nonnegative. 
\end{abstract}
	
	\maketitle
	
	\vspace{-20 pt}

	\section{Introduction}

In the 1960s, a wide variety of positivity properties of vector bundles on algebraic varieties were studied by Grauert, Griffiths, Hartshorne, and others. The most commonly studied notion was introduced by Hartshorne \cite{HartshorneAmple}. 
A vector bundle $E$ on a complex projective variety $X$ is \emph{ample} if the relative $\mathcal{O}(1)$ on the projectivization $\mathbb{P}_X({E})$ is an ample line bundle. A vector bundle is \emph{nef} if the relative $\mathcal{O}(1)$ is a nef line bundle. There is now a large collection of results showing that ample and nef vector bundles have properties analogous to those enjoyed by ample and nef line bundles; see \cite[Part Two]{Lazarsfeld2} for a comprehensive summary of the theory as of 2004.

Motivated by a problem of Griffiths \cite{Griffiths}, positivity properties for characteristic classes of ample or nef vector bundles have attracted significant attention. 
Bloch and Gieseker  showed that if the rank of a nef vector bundle $E$ is at least the dimension of $X$, then the degree of the top Chern class $c_{\dim X}(E)$ is nonnegative \cite{BlochGieseker}. This was earlier proved when $X$ is a surface by Kleiman \cite{Kleiman}. After some further work \cite{Giseker,UsuiTango}, this result was generalized to \emph{Schur classes} by Fulton and Lazarsfeld. Given a partition $\lambda = (\lambda_1, \dotsc, \lambda_e)$ with $\lambda_1 \ge \dotsb \ge \lambda_e > 0$, let $\ell(\lambda) = e$ be the number of parts and let $|\lambda| = \lambda_1 + \dotsb + \lambda_e$ be the sum of the parts. The Schur class $s_{\lambda}(E)$ is defined by the formula
\begin{equation}\label{eq:schurdef}
s_{\lambda}(E) = \det(s_{\lambda_i + j -i}(E))_{i,j=1}^{\ell(\lambda)}
\end{equation}
where $s_i(E) = s_{(i)}(E)$ is the $i$th Segre class of $E$. This is a class in $H^{2|\lambda|}(X)$, and it can also be defined as an explicit polynomial in Chern classes; see Section~\ref{sec:projbund}. In \cite{FultonLaz}, Fulton and Lazarsfeld showed that if $|\lambda| = \dim X$ and $E$ is nef, then the degree of $s_{\lambda}(E)$ is nonnegative. 
As $c_i(E) = s_{(1^i)}(E)$, this is a generalization of the result of Bloch and Gieseker. Furthermore, this result is sharp: any characteristic class of $E$ in the cohomology of $X$ can be expressed as a linear combination of Schur classes, and Fulton and Lazarsfeld constructed examples to show that $\sum_{\lambda} a_{\lambda} s_{\lambda}(E)$ has nonnegative degree for all nef vector bundles $E$ on all complex projective varieties $X$ if and only if $a_{\lambda} \ge 0$ for all $\lambda$. 

This result was extended to the setting of compact K\"{a}hler manifolds in \cite{DPS}. In a series of works, Ross and Toma have shown positivity properties of Schur classes on compact K\"{a}hler manifolds when $|\lambda| = \dim X - 2$ \cite{RT1,RT2,RT3}. 
These results were extended in \cite{HRLuZheng}. 

\medskip

We prove a generalization of these results on Schur classes in a purely algebraic setting. Beyond giving a new proof of the aforementioned results, our proof gives additional inequalities satisfied by Schur classes of nef vector bundles. These results are new even for nef vector bundles on smooth complex projective varieties. 

Let $A = A^0 \oplus \dotsb \oplus A^n$ be a commutative graded $\mathbb{R}$-algebra which is equipped with a linear map $\deg_A \colon A^n \to \mathbb{R}$. If $A$ is equipped with a nonempty open convex cone $\mathcal{K}_A \subseteq A^1$, then we say that $A$ has the \emph{K\"{a}hler package} with respect to $\mathcal{K}_A$ if it satisfies the following properties:
\begin{enumerate}
\item For all nonnegative $k$, the pairing
$$A^k \times A^{n-k} \to \mathbb{R}, \quad (x, y) \mapsto \deg_A(xy)$$
is nondegenerate (\emph{Poincar\'{e} duality}). 
\item For all nonnegative $k \le n/2$ and $h \in \mathcal{K}_A$, the map
$$A^k \to A^{n-k}, \quad x \mapsto h^{n - 2k} x$$
is an isomorphism (\emph{hard Lefschetz property}).
\item For all nonnegative $k \le n/2$ and $h \in \mathcal{K}_A$, the bilinear form
$$A^k \times A^k \to \mathbb{R}, \quad (x, y) \mapsto (-1)^k \deg_A(h^{n - 2k} xy)$$
is positive definite on the kernel of multiplication by $h^{n - 2k + 1}$.  
\end{enumerate}

For example, if $X$ is a compact K\"{a}hler manifold (such as a smooth complex projective variety), then the ring of real $(p, p)$ classes on $X$ satisfies the K\"{a}hler package with respect to the K\"{a}hler cone; see \cite[Chapter 3]{Huybrechts}. There are a number of combinatorial sources of algebras with the K\"{a}hler package \cite{McMullen1989,AHK18,ADH,LarsonPartida}. See \cite{HuhICM1} for a discussion. 

We will be concerned with rings that resemble the cohomology ring of a projective bundle. Recall that if $E$ is a rank $r$ vector bundle over $X$, then $H^*(\mathbb{P}_X(E)) \cong H^*(X)[\zeta]/(\zeta^r - c_1(E) \zeta^{r-1} + \dotsb + (-1)^r c_r(E))$, where $\zeta$ is the first Chern class of the relative $\mathcal{O}(1)$.  Given classes $c_1, \dotsc, c_r$, with $c_i \in A^i$, the \emph{projective bundle ring} $B$ is the ring defined by
$$B \coloneqq \frac{A[\zeta]}{(\zeta^r - c_1 \zeta^{r-1} + \dotsb + (-1)^r c_r)}.$$
Set $c_0 = 1$, and set $c_i = 0$ for $i > r$ or $i < 0$. 
Note that $B$ is a commutative graded $\mathbb{R}$-algebra, and there is a natural ring injection $\pi^* \colon A \to B$. The ring $B$ admits a direct sum decomposition
\begin{equation}\label{eq:Bdirectsum}
B = \bigoplus_{i=0}^{r-1} \zeta^i \cdot A.
\end{equation}
In particular, $B^{n + r - 1}$ is isomorphic to $\zeta^{r-1} A^n$, so there is an induced degree map $\deg_B \colon B^{n + r - 1} \to \mathbb{R}$. 
For any partition $\lambda$, there is a corresponding Schur class $s_\lambda \in A^{|\lambda|}$ (see Section~\ref{sec:projbund}). For example, if $\lambda = (1^i)$ then $s_\lambda = c_i$. 
Let $\mathcal{K}_B$ be the open convex cone in $B^1$ given by $\pi^*(\K_A) + \R_{> 0} \zeta$. 
 It turns out that, for any choice of $c_1, \dotsc, c_r$, $B$ has the K\"{a}hler package with respect to some cone; see Proposition~\ref{prop:somecone}. We will show that if $B$ has the K\"{a}hler package with respect to $\mathcal{K}_B$, then the classes $c_1, \dotsc, c_r$ have remarkable positivity properties. 

\medskip

To state our main result, we will need to make some combinatorial definitions. 
We often identify partitions with their corresponding Young diagrams (with row indices increasing downwards starting from $1$ and column indices increasing to the right starting from $1$). In particular, we write $\lambda \subseteq \mu$ for a partition $\mu$ if there is a containment of the corresponding Young diagrams. 
We write $\lambda + \mu$ for component-wise addition of partitions. 
Fix a partition $\lambda$, and let its transpose be the partition
\[
(m_1,\ldots,m_1,m_2,\ldots,m_2,\ldots,m_t,\ldots,m_t),
\]
where $m_1 > m_2 > \cdots > m_t$ and $m_i$ appears $k_i$ times. For
$1 \le i \le t$, let $\widehat{\lambda}_i$ be the rectangular partition
\[
(\underbrace{k_i,\ldots,k_i}_{m_i\text{ times}}).
\]
Then $\ell(\widehat{\lambda}_i) = m_i$ and $\lambda=\widehat{\lambda}_1+\cdots+\widehat{\lambda}_t.$
We call this the \emph{rectangular decomposition} of $\lambda$.
Observe that the distinct parts of $\lambda$ are $\{ \lambda_{m_i} = k_1 + \cdots + k_i : 1 \le i \le t \}$.  Set $m_{t + 1} = 0$ and $\lambda_{0} = \lambda_{m_{t + 1}} = \infty$. 
For $1 \le i \le t$, define the skew Young diagram
\begin{equation}\label{eq:rectangledef}
R_i = \{ (r,c) : m_{i + 1} < r \le m_i,\, \lambda_{m_i} < c \le \lambda_{m_{i + 1}} \}. 
\end{equation}
Observe that $R_1,\ldots,R_{t - 1}$ are finite rectangles, while $R_t$ has infinitely many columns. See Figure~\ref{fig:rectangular-decomposition} for an illustration when $t = 3$. 

\begin{figure}[htbp]
	\centering
	\begin{tikzpicture}[x=0.9cm,y=0.9cm, every node/.style={font=\Large}]
		
		\definecolor{lambdaBlue}{RGB}{215,232,255}
		\definecolor{regionYellow}{RGB}{255,238,175}
		
		\fill[lambdaBlue] (0,0) rectangle (2,6);
		\fill[lambdaBlue] (2,2) rectangle (4,6);
		\fill[lambdaBlue] (4,4) rectangle (6,6);
		
		\fill[regionYellow] (2,0) rectangle (4,2);
		\fill[regionYellow] (4,2) rectangle (6,4);
		\fill[regionYellow] (6,4) rectangle (11,6);
		
		\draw[thin] (2,0) rectangle (4,2);
		\draw[thin] (4,2) rectangle (6,4);
		\draw[thin] (6,4) -- (11,4);
		\draw[thin] (6,6) -- (11,6);
		\draw[thin] (6,4) -- (6,6);
		
		\draw[thin] (2,2) -- (2,6);
		\draw[thin] (4,4) -- (4,6);
		
		\draw[line width=1.3pt]
		(0,0) -- (0,6) -- (6,6) -- (6,4) -- (4,4) -- (4,2) -- (2,2) -- (2,0) -- cycle;
		
		\node at (1,3) {$\widehat{\lambda}_1$};
		\node at (3,4) {$\widehat{\lambda}_2$};
		\node at (5,5) {$\widehat{\lambda}_3$};
		
		\node at (3,1) {$R_1$};
		\node at (5,3) {$R_2$};
		\node at (8.5,5) {$R_3$};
		\node at (10.2,5) {$\cdots$};

	\end{tikzpicture}
	
		\caption{The rectangular decomposition
		\(\lambda=\widehat{\lambda}_1+ \cdots +\widehat{\lambda}_t\)
		and the associated rectangles \(R_1,\ldots,R_t\) when \(t = 3\).}
	\label{fig:rectangular-decomposition}

\end{figure}

\begin{theorem}\label{thm:main}
Let $A = A^0 \oplus \dotsb \oplus A^n$ be an algebra with the K\"{a}hler package with respect to a cone $\mathcal{K}_A$. For some $r \ge 1$, choose classes $c_i \in A^i$ for $i = 1, \dotsc, r$. Let $B = A[\zeta]/(\zeta^r - c_1 \zeta^{r-1} + \dotsb + (-1)^r c_r)$. Suppose that $B$ has the K\"{a}hler package with respect to $\mathcal{K}_B$. 
Let $k \le n/2$ be a nonnegative integer. 
Let $\lambda$ be a partition with $|\lambda| = n - 2k$, and 
consider an element $a \in A^k$. Fix $h \in \K_A$ and consider  the rectangular decomposition of $\lambda$ with corresponding rectangles $R_1,\ldots,R_t$ as defined in \eqref{eq:rectangledef}.
Suppose that for any partition $\mu$ with $\lambda \subseteq \mu$ and $\ell(\lambda) = \ell(\mu)$, either
 \begin{enumerate}
 	
 	\item\label{i:primitiveinside} $\mu \subseteq \lambda \cup R_1 \cup \cdots \cup R_t$  and 
 	\begin{equation}\label{eq:primitive}
 		 	a  \left( h s_\mu +   \sum_{i = 1}^t i \sum_{ \substack{ \nu \supseteq \mu \\ |\nu| = |\mu| + 1 \\ \nu \smallsetminus \mu \in R_i}} s_\nu \right) = 0, \textrm{ or},
 	\end{equation}

 	\item\label{i:primitiveoutside} $\mu \nsubseteq \lambda \cup R_1 \cup \cdots \cup R_t$ and  $as_\mu = 0$. 
 \end{enumerate}
 Then 
 \[
 (-1)^k\deg_A(a^2 s_\lambda) \ge 0,
 \]
 with equality if and only if $a s_\mu = 0$ in $A$ for all partitions $\mu$ with $\lambda \subseteq \mu$. 
\end{theorem}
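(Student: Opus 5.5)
The plan is to realize $a^2 s_\lambda$ as a Hodge--Riemann pairing on a suitable ring with the Kähler package, in the spirit of Fulton--Lazarsfeld's flag-bundle proof. Since $s_\lambda$ is a determinant in Segre classes, the natural ambient ring is an iterated projective bundle (a partial flag bundle ring) built from $B$. That ring should still have the Kähler package, and $s_\lambda$ should be a pushforward of a monomial in tautological classes.

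\textbf{Step 1: the flag bundle ring $F$.} From $B$ I construct the partial flag bundle ring $F$ corresponding to the rectangular decomposition: quotients of ranks determined by $m_1 > \dotsb > m_t$, obtained by iterating the projective bundle construction. I show that $F$ has the Kähler package with respect to a cone built from $\mathcal{K}_B$. This uses the hypothesis on $B$ together with the fact that Kähler packages pass to projective bundle rings whose Chern roots are ``positive'', which I expect is Proposition~\ref{prop:somecone} or its refinement. On $F$ there is a degree map compatible with $\deg_A$ via a pushforward $\pi_*\colon F \to A$.

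\textbf{Step 2: realize $s_\lambda$ as a pushforward.} I use the Kempf--Laksov/Józefiak--Lascoux--Pragacz description: $s_\lambda = \pi_*(\omega^{N})$ for an explicit class $\omega \in F^1$. This class is a positive combination of the first Chern classes of the tautological quotient line bundles $\det Q_i$, with weights $k_i$. Moreover, $\omega$ lies in (the closure of) the Kähler cone of $F$, and $N = \dim F - 2k$. Then
\[
(-1)^k\deg_A(a^2 s_\lambda) = (-1)^k\deg_F\bigl((\pi^*a)^2 \omega^{N}\bigr).
\]
By the Hodge--Riemann relations on $F$, this is $\ge 0$ once $\pi^*a$ is primitive, i.e.\ $\omega^{N+1}\pi^*a = 0$. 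To handle $\omega$ on the boundary, I perturb to $\omega + \epsilon h$ and take limits.

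\textbf{Step 3: translate primitivity.} The weighted class $\omega + h$ (with weights $i$ on the $i$-th step, which produce the coefficients $i$ in \eqref{eq:primitive}) must satisfy $(\omega+h)^{N+1}\pi^*a = 0$ in $F$. Pushing forward along all the fibres decomposes $F$ as a free $A$-module with basis indexed by Schubert-type monomials. The component of $(\omega+h)\omega^{N}\pi^* a$ along each basis element is computed by Pieri's rule: multiplying $s_\mu$ by the tautological class adds a box in rectangle $R_i$ with weight $i$. This matches expression \eqref{eq:primitive} for $\mu \subseteq \lambda\cup R_1\cup\dots\cup R_t$. Boxes outside the union correspond to basis elements killed by condition (2). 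So the hypotheses (1)--(2), over all $\mu\supseteq\lambda$ with $\ell(\mu)=\ell(\lambda)$, are exactly the vanishing of all components, which gives primitivity of $\pi^*a$.

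\textbf{Step 4: equality case.} Hodge--Riemann positive definiteness gives equality iff $\omega^{N}\pi^*a$ vanishes appropriately, i.e.\ iff $\pi^* a$ pairs to zero against the whole relevant primitive space. The same basis expansion converts this into $a s_\mu = 0$ for all $\mu\supseteq\lambda$.

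\textbf{Main obstacle.} I expect the main obstacle to be Step 3: the bookkeeping that identifies the pushforward components of $(\omega+h)\omega^N\pi^*a$ with precisely the weighted Pieri sums over the rectangles $R_i$. A secondary difficulty is checking that the flag ring really inherits the Kähler package with $\omega$ in the closure of its cone. For Step 3 I would first work out the Grassmannian case $t=1$, where $F$ is a single Grassmann bundle and only $R_1$ appears, and then induct on $t$ by composing Grassmann bundles.
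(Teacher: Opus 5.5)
\textbf{The gap is in Step 2, and the rest of your plan depends on it.} The identity $s_\lambda=\pi_*(\omega^N)$, with $\omega\in F^1$ a nonnegative combination of tautological divisor classes, is false in general.

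Pushing forward a power of a single divisor class from a flag bundle gives a positive combination of many Schur classes. For example, on the Grassmann bundle of rank-$m$ quotients, $\pi_*(c_1(Q)^{m(r-m)+d})=\sum_\mu f^\mu s_{\mu-((r-m)^m)}$, where $f^\mu$ counts standard tableaux. A single Schur class appears only in special cases such as $\pi_*(\zeta^{r-1+j})=s_j$ on $B$.

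Concretely, take $r=2$ and $\lambda=(1,1)$, where the full flag bundle is $B$ itself. Using \eqref{eq:pushforward},
\[
\pi_*\bigl((x\zeta+yc_1)^3\bigr)=(x^3+3x^2y+3xy^2)\,c_1^2-x^3\,c_2 .
\]
This equals $c_2$ for no real $x,y$, and any $\omega$ with $x>0$ gives $c_2$ a negative coefficient.

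What is true for a rectangle $\lambda=(k_1^m)$ is $s_\lambda=\pi_*(c_m(Q)^{r-m+k_1})$. That is a power of a codimension-$m$ class, to which no Hodge--Riemann form attached to a single divisor applies. As a sanity check: at $k=0$ your argument would deduce Fulton--Lazarsfeld (hence Bloch--Gieseker) positivity from $\deg_F(\omega^{\dim F})\ge 0$ for a nef class, with no Hodge--Riemann input at all.

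Steps 3 and 4 inherit this problem. In Step 1, Proposition~\ref{prop:somecone} only provides some inexplicit subcone on which the K\"{a}hler package holds, so it cannot place $\omega$ in the closure of the relevant cone. The paper needs Theorem~\ref{thm:multiproj} for this, whose proof uses the Kashiwara--Kawai convexity result.

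\textbf{The missing idea is to split $c_m(Q)$ into roots.} On the $m$-fold fibre product $C$ with $m=\ell(\lambda)$, Proposition~\ref{prop:pushforward} gives $(-1)^{\binom m2}m!\,s_\lambda=p_*\bigl(\Delta^2(\zeta_1\cdots\zeta_m)^{r-m}s_\lambda(\zeta_1,\ldots,\zeta_m)\bigr)$. Here $s_{\widehat\lambda_1}(\zeta_1,\ldots,\zeta_m)=(\zeta_1\cdots\zeta_m)^{k_1}$ is a product of classes in $\overline{\mathcal{K}}_C$. So by Proposition~\ref{prop:descent}, the quotient $D=C/\operatorname{ann}((\zeta_1\cdots\zeta_m)^{r-m+k_1})$ has the K\"{a}hler package.

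The price is the Vandermonde factor. Hodge--Riemann is applied to the antisymmetric class $a\Delta$ in degree $k+\binom m2$ of $D$, not to $\pi^*a$ in degree $k$; this degree is what accounts for the sign $(-1)^{\binom m2}$. This is why higher Hodge--Riemann relations are needed even when $k=0$.

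The leftover factor $s_{\overline\lambda}(\zeta_1,\ldots,\zeta_m)$ is handled by induction on $t$. One uses the ring $D[y]/((y-\zeta_1)\cdots(y-\zeta_m))$ (Proposition~\ref{prop:nefdirectsum}), with $h$ replaced by $h+s_1(\zeta_1,\ldots,\zeta_m)$. Iterating this replacement is what produces the weight $i$ on boxes of $R_i$.

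Transferring the hypotheses to primitivity of $a\Delta$ in $D$ uses Lemma~\ref{lem:detectzeroupstairs} and the fact that $\widetilde L_\lambda$ is a $\Lambda^{(\le m)}$-module map (Lemma~\ref{lem:commute}). The equality case uses Lemma~\ref{lem:idealgenerated}.
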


We make some comments on the two conditions appearing in Theorem~\ref{thm:main}. 

\begin{itemize}
	\item Condition \eqref{i:primitiveinside} holds if $|\mu| \ge |\lambda| + k$. Indeed, 
	then all terms of the form $ahs_\mu$ and $as_\nu$ in Equation~\eqref{eq:primitive} have degree strictly greater than $n$ and so are zero. Similarly, \eqref{i:primitiveoutside} holds if $|\mu| > |\lambda| + k$.
	\item 	The condition $\mu \nsubseteq \lambda \cup R_1 \cup \cdots \cup R_t$ implies that $|\mu| \ge |\lambda| + 3$. Indeed, given a box $b$ in $\mu \smallsetminus (\lambda \cup R_1 \cup \cdots \cup R_t)$, the boxes immediately above and to the left of $b$ lie in $\mu \smallsetminus \lambda$. In particular, condition \eqref{i:primitiveoutside} holds vacuously if $k \le 2$. 
\end{itemize}

When $A$ is the ring of real $(p,p)$ classes on a compact K\"{a}hler manifold and $B$ is the ring of real $(p,p)$ classes on the projectivization of a nef vector bundle, then $B$ has the K\"{a}hler package with respect to $\mathcal{K}_B$ and the hypothesis of Theorem~\ref{thm:main} holds. 

\begin{example}\label{ex:FL}
Suppose that $k=0$, so $|\lambda| = n$. Then \eqref{i:primitiveinside} and \eqref{i:primitiveoutside} are automatic because $A^s = 0$ for $s > n$. So if $B$ has the K\"{a}hler package with respect to $\mathcal{K}_B$, then $\deg_A(s_{\lambda}) \ge 0$. 
\end{example}

In particular, Example~\ref{ex:FL} generalizes the main result of \cite{FultonLaz} for smooth complex projective varieties. Using intersection cohomology, our proof can also be adapted to arbitrary complex projective varieties; see Theorem~\ref{thm:moduleversion}. Our arguments cannot be easily adapted to varieties over a field of positive characteristic. 

\begin{example}\label{ex:rossToma}
Suppose that $k = 1$, so $|\lambda| = n-2$. Then \eqref{i:primitiveoutside} is vacuous and \eqref{i:primitiveinside} is vacuous unless $\mu = \lambda$. We see that the hypothesis of Theorem~\ref{thm:main} holds for $a \in A^1$ which lie inside a subspace of codimension at most $1$. This implies that the bilinear form $A^1 \times A^1 \to \mathbb{R}$ given by $(x, y) \mapsto \deg_A(xy s_{\lambda} )$ has at most one positive eigenvalue. 
\end{example}

Example~\ref{ex:rossToma} is a generalization of the main theorem of \cite{RT2}. 
If $k > 1$, then Theorem~\ref{thm:main} gives a version of the higher Hodge--Riemann relations for Schur classes of nef vector bundles. This answers \cite[Question 9.1]{RT1}.

Theorem~\ref{thm:main} gives a generalization of known positivity properties of nef vector bundles. We now state a version which gives a stronger positivity result under an assumption that holds for Chern classes of ample vector bundles. 

\begin{theorem}\label{thm:strict}
Let $A = A^0 \oplus \dotsb \oplus A^n$ be an algebra with the K\"{a}hler package with respect to a cone $\mathcal{K}_A$. For some $r \ge 1$, choose classes $c_i \in A^i$ for $i = 1, \dotsc, r$. Let $B = A[\zeta]/(\zeta^r - c_1 \zeta^{r-1} + \dotsb + (-1)^r c_r)$. Suppose that $B$ has the K\"{a}hler package with respect to $\mathcal{K}_B$, and that $\zeta$ satisfies the hard Lefschetz property in $B$. Then for any partition $\lambda$ with $|\lambda| = n$ and at most $r$ parts, we have $\deg_A(s_{\lambda}) > 0$. 
\end{theorem}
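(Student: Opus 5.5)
Throughout, the claim to establish is $\deg_A(s_\lambda) > 0$ for every partition $\lambda$ with $|\lambda| = n$ and $\ell(\lambda) \le r$.

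The plan is to deduce Theorem~\ref{thm:strict} from Theorem~\ref{thm:main} with $k=0$, the only new input being the hard Lefschetz property for $\zeta$. By Example~\ref{ex:FL} we already know $\deg_A(s_\lambda)\ge 0$. Take $a = 1\in A^0$. The equality clause of Theorem~\ref{thm:main} then says $\deg_A(s_\lambda)=0$ if and only if $s_\mu = 0$ in $A$ for every $\mu\supseteq\lambda$; since $A^j=0$ for $j>n$, this reduces to the single condition $s_\lambda = 0$ in $A^n$. So it suffices to show $s_\lambda\neq 0$, and by Poincar\'e duality on $A^n$ (which is one-dimensional, spanned by an element of nonzero degree) this is the same as $\deg_A(s_\lambda)\ne 0$. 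As stated, that is circular, so the real task is to produce a contradiction from $s_\lambda=0$ using hard Lefschetz for $\zeta$.

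To do this I would realize $s_\lambda$ through pushforwards from $B$. Let $\pi_*\colon B\to A$ be the $A$-linear map taking $\zeta^{r-1}\mapsto 1$ and $\zeta^i\mapsto 0$ for $i<r-1$, so that $\deg_B = \deg_A\circ \pi_*$. With this normalization, $\pi_*(\zeta^{r-1+i}) = s_i$ (Segre classes with the sign convention of Section~\ref{sec:projbund}). For a single-row $\lambda=(n)$ one gets $\deg_A(s_n)=\deg_B(\zeta^{n+r-1})$. This is positive when $n+r-1$ is the top degree: by hard Lefschetz for $\zeta$ in degree $0$, the map $1\mapsto \zeta^{n+r-1}$ is an isomorphism $B^0\to B^{n+r-1}$, so $\zeta^{n+r-1}\neq 0$. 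Moreover $\zeta$ lies in the closure of $\mathcal{K}_B$, so the degree is nonnegative and hence positive. For general $\lambda$ with $\ell=\ell(\lambda)\le r$, I would use the standard Jacobi--Trudi/flag-bundle device: realize $s_\lambda$ as the pushforward of a monomial $\zeta_1^{\lambda_1+r-1}\cdots\zeta_\ell^{\lambda_\ell+r-\ell}$ on an iterated projective bundle (a partial flag ring) built algebraically over $A$, as in the proof of Theorem~\ref{thm:main}. The Kähler package and hard Lefschetz for the tautological classes would be propagated to each stage; the hypothesis $\ell\le r$ is exactly what makes the quotient bundles at each stage of positive rank.

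Alternatively, and more in the spirit of Theorem~\ref{thm:main}, I would argue by perturbation. Hard Lefschetz for $\zeta$ is an open condition, so $\zeta-\epsilon\,\pi^*h$ still has hard Lefschetz for small $\epsilon>0$. Replacing $\zeta$ by $\zeta-\epsilon h$ corresponds to twisting the $c_i$, and the twisted ring $B_\epsilon$ should still satisfy the Kähler package with respect to $\mathcal{K}_{B_\epsilon}$, since $\mathcal{K}_{B_\epsilon}=\mathcal{K}_B-\epsilon h$ remains inside the Lefschetz region, by openness. Applying Example~\ref{ex:FL} to $B_\epsilon$ gives $\deg_A s_\lambda(E\otimes L^{-\epsilon})\ge 0$. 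Expanding by the twisting formula for Schur classes,
\[
s_\lambda = s_\lambda(E_\epsilon) + \sum_{\mu\subsetneq\lambda} c_{\lambda\mu}(\epsilon)\, h^{n-|\mu|}s_\mu(E_\epsilon),
\]
with coefficients that are positive for $\ell(\lambda)\le r$. Each term is nonnegative by Theorem~\ref{thm:main}, applied via mixed versions with $h$ factors. The term with $\mu=\emptyset$ is $c\,\epsilon^{n}\deg h^n>0$, and its coefficient is nonzero precisely because $\ell(\lambda)\le r$. This gives strict positivity.

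The main obstacle is the claim that $B_\epsilon$ retains the full Kähler package on its shifted cone. Hodge--Riemann is closed rather than open, so I would first try to argue it from hard Lefschetz holding along the whole segment from $\mathcal{K}_B$ to $\zeta-\epsilon h$, together with continuity of signatures. I would also need to check that the twisting coefficients are nonzero exactly when $\ell(\lambda)\le r$.
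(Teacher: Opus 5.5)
Your second (perturbation) route is the paper's proof. It twists by $-\varepsilon h$ and applies the $k=0$ case of Theorem~\ref{thm:main} to the twisted ring, and also, via Proposition~\ref{prop:descent}, to the terms $h^{n-|\mu|}s_\mu\langle-\varepsilon h\rangle$. It then expands $s_\lambda$ by \eqref{eq:twist} and isolates $a_\emptyset\varepsilon^n\deg_A(h^n)>0$.

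However, the step you flag as the main obstacle is a genuine gap, and your proposed fix does not close it. Under $B_\varepsilon\cong B$, $\zeta\mapsto\zeta-\varepsilon h$, the cone $\mathcal{K}_{B_\varepsilon}$ becomes $\pi^*(\mathcal{K}_A)+\R_{>0}(\zeta-\varepsilon h)$. This is the cone spanned by $\mathcal{K}_A$ and $\zeta-\varepsilon h$, not a translate of $\mathcal{K}_B$. Openness of hard Lefschetz only gives you a ball $U$ around $\zeta$ and neighborhoods of points of $\mathcal{K}_B$, and these do not cover the twisted cone. It contains the rays through $a_0-\tfrac{\varepsilon}{2N}h+\tfrac{1}{N}\zeta=\tfrac1N\bigl(Na_0+\tfrac{\varepsilon}{2}h+(\zeta-\varepsilon h)\bigr)$ with $a_0\in\partial\mathcal{K}_A$ and $N\gg 0$. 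These rays lie outside $\overline{\mathcal{K}}_B$ and far from $\zeta$. They converge to the ray of $a_0$, where hard Lefschetz fails (for $r\ge 2$, $a_0^{n+r-1}=0$), so openness gives no uniform control there.

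Your continuity-of-signature argument is fine once hard Lefschetz is known on a connected set meeting $\mathcal{K}_B$. On the Lefschetz locus the Hodge--Riemann property is locally constant, so ``closed rather than open'' is not the issue. What is missing is hard Lefschetz on the whole twisted cone. The paper gets it from the Kashiwara--Kawai convexity theorem, Proposition~\ref{prop:convex}. The component of $U_{B,\HR}$ containing $\mathcal{K}_B\cup U$ is convex, so it contains an open convex cone $\tilde{\mathcal{K}}\supseteq\mathcal{K}_B\cup\{\zeta\}$. For small $\varepsilon$ this cone contains $\pi^*(\mathcal{K}_A)+\R_{>0}(\zeta-\varepsilon h)$. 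Alternatively, since Theorem~\ref{thm:main} only needs some cone, you could first shrink $\mathcal{K}_A$ to a narrow open cone around $h$. Then the twisted cone does lie in $\mathcal{K}_B\cup\R_{>0}U$ for small $\varepsilon$, and your openness-plus-signature argument works.

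Two smaller points. First, in the expansion you need every $a_\mu\ge 0$; they are not all positive, as you claim. This is a real input, which the paper gets from Corteel--Kim or from the sharpness half of Fulton--Lazarsfeld applied to $E\boxtimes\mathcal{O}(1)$ on $X\times\mathbb{P}^i$. The hypothesis $\ell(\lambda)\le r$ enters through $a_\emptyset=s_\lambda(1,\dotsc,1)>0$.

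Second, your first route does not extend beyond the single-row case, where your argument is correct. By \eqref{eq:pushforwardmulti}, your monomial pushes forward to the product of Segre classes $\prod_j s_{\lambda_j-j+1}$, not to $s_\lambda$; Proposition~\ref{prop:pushforward} needs the non-monomial factor $\Delta^2$. In a genuine flag ring, the monomial representing $s_\lambda$ involves Chern roots other than the tautological quotient class. Those roots are not limits of K\"ahler classes, so no sign information follows.
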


This recovers \cite[Theorem 1]{FultonLaz}. Theorem~\ref{thm:strict} is deduced from Theorem~\ref{thm:main} via a perturbation argument. 

\medskip

We discuss two ingredients in the proof of Theorem~\ref{thm:main}. We retain the notation of the theorem. If $E$ is a vector bundle over a variety $X$, then we can consider the multiprojective bundle $\mathbb{P}_X(E) \times_{X} \mathbb{P}_X(E) \times_X \dotsb \times_X \mathbb{P}_X(E)$ obtained by taking the fiber product of $m$ copies of $\mathbb{P}_X(E)$ over $X$. We will consider rings which resemble the cohomology rings of multiprojective bundles. For some $m \ge 1$, let 
$$C := \frac{A[\zeta_1, \dotsc, \zeta_m]}{(\zeta_1^{r} - c_1 \zeta_1^{r-1} + \dotsb + (-1)^r c_r, \dotsc, \zeta_m^r - c_1 \zeta_{m}^{r-1} + \dotsb + (-1)^r c_r)}.$$
There is a natural inclusion $p^* \colon A \to C$ which identifies $A$ with a subalgebra of $C$. 
Then $C$ admits a direct sum decomposition analogous to \eqref{eq:Bdirectsum}, and there is a distinguished degree map $\deg_C \colon C^{n + m(r-1)} \to \mathbb{R}$. Let $\mathcal{K}_C$ be the open convex cone in $C^1$ given by $p^*(\mathcal{K}_A) + \mathbb{R}_{>0} \zeta_1 + \dotsb + \mathbb{R}_{>0} \zeta_m$. 

\begin{theorem}\label{thm:multiproj}
Suppose that $B$ satisfies the K\"{a}hler package with respect to $\mathcal{K}_B$. Then $C$ has the K\"{a}hler package with respect to $\mathcal{K}_C$. 
\end{theorem}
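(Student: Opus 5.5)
Induction on $m$ looks right: the case $m=1$ is the hypothesis, and $C_m$ is the projective bundle ring over $C_{m-1}$ with the same classes $c_1,\dots,c_r$, pulled back along $A \to C_{m-1}$. Indeed
\[
C_m = C_{m-1}[\zeta_m]/(\zeta_m^r - c_1\zeta_m^{r-1} + \dotsb + (-1)^r c_r),
\]
and the cone $\mathcal{K}_{C_m}$ equals $\mathcal{K}_{C_{m-1}} + \mathbb{R}_{>0}\zeta_m$. So it suffices to prove the following relative statement. Let $A \to A'$ be a map of graded algebras such that $A'$ has the K\"ahler package with respect to $\mathcal{K}_{A'}$, with $\mathcal{K}_{A'} \supseteq$ the image of $\mathcal{K}_A$. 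Suppose the projective bundle ring $B = A[\zeta]/(\dots)$ has the K\"ahler package for $\mathcal{K}_B$. Then $B' = A'[\zeta]/(\dots)$ has the K\"ahler package for $\mathcal{K}_{A'} + \mathbb{R}_{>0}\zeta$.

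This cannot hold for an arbitrary $A'$, so I would keep the specific structure. Note $C_m = C_{m-1}\otimes_A B$, and symmetrically $C_m$ is a projective bundle ring over $B$ via $\zeta_1$ (or any $\zeta_i$). I would run the standard argument showing that the K\"ahler package is preserved under such fibered tensor products. Poincar\'e duality is routine from the direct sum decomposition
\[
C = \bigoplus_{0\le i_j \le r-1} \zeta_1^{i_1}\cdots\zeta_m^{i_m}A,
\]
since the top-degree coefficient extraction is triangular with respect to a suitable order on monomials.

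For hard Lefschetz and the Hodge--Riemann relations I would use the mixed/deformation approach of Cattani and of Adiprasito--Huh--Katz style. First, HL together with HR for one element $\ell \in \mathcal{K}_C$ implies the same for every element of the connected open cone $\mathcal{K}_C$, provided HL holds throughout the cone. This is the usual signature-constancy argument: the signature of the Hodge--Riemann form cannot change along a path on which HL holds.

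So the plan is:
\begin{enumerate}
\item Prove HL for all $\ell = p^*h + \sum t_i\zeta_i$ with $h\in\mathcal{K}_A$ and $t_i>0$.
\item Prove HR for a single convenient $\ell$, obtained as a limit where $t_m \to 0$ relative to the others or where $h$ dominates.
\end{enumerate}
For the degeneration in step 2, I would view $C_m$ as a $B$-module through the $\zeta_m$ factor, free with basis $1,\zeta_j,\dots$ for $j<m$. Use the decomposition theorem analogue: for $\ell = \ell_{m-1} + \epsilon\,\zeta_m$ with $\epsilon$ small, the Lefschetz decomposition of $C_m$ is the tensor of the ``fiber'' $\mathbb{R}[\zeta_m]/\zeta_m^r$ with $C_{m-1}$. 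In that limit the HR forms decompose into orthogonal pieces whose signs are those of $C_{m-1}$ times those of $\mathbb{P}^{r-1}$. This is the standard product/Leray argument, as in proofs that the K\"ahler package is preserved under products and projective bundles.

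The main obstacle is step 1, HL on the whole cone. Here I would try to reduce to the mixed Hodge--Riemann relations on $B$ itself, which I expect the paper already derives from the K\"ahler package of $B$: either via Proposition~\ref{prop:somecone}-style arguments, or via the fact that the K\"ahler package for $\mathcal{K}_B$ implies mixed HR for tuples from $\mathcal{K}_B$. Then, by pushing forward along $\zeta_i$-coefficients, express $\deg_C(\ell^{N-2k}x^2)$ in terms of mixed degrees on $B$ and on $C_{m-1}$, and argue by a second induction that HR holds on the whole cone. If that route stalls, I would instead degenerate to $h$ dominating, take $\ell = h/\epsilon + \sum\zeta_i$, and check HL via a filtration by powers of $A^{>0}$ whose associated graded is $A\otimes\bigotimes_i \mathbb{R}[\zeta_i]/\zeta_i^r$. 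That associated graded has the K\"ahler package by the product theorem, and the package then lifts from the associated graded back to $C$.
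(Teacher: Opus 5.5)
Your bookkeeping is sound: Poincar\'e duality is routine, and signature constancy along paths on which hard Lefschetz holds is correct. Your step 2 is essentially the local ingredient the paper uses, once made precise by rescaling. Let $D_\mu$ be the ring with relations $\sum_j(-1)^jc_j\mu_i^j\zeta_i^{r-j}$. The isomorphism $D_\mu\to C$, $\zeta_i\mapsto\mu_i\zeta_i$, carries $h+\sum_i\zeta_i$ to $h+\sum_i\mu_i\zeta_i$. Letting some $\mu_i\to0$ degenerates $D_\mu$ to a tensor product with copies of $\R[z]/(z^r)$, where the package holds by Proposition~\ref{prop:civanishexample}, and openness of the Hodge--Riemann condition then gives it nearby.

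That only produces the Hodge--Riemann relations near faces of the slice $h+\sum_i\R_{>0}\zeta_i$. The real content of the theorem is your step 1 (hard Lefschetz on all of $\K_C$), and you give no argument for it. Route (b) cannot supply it. It never uses the hypothesis on $B$ and just reproduces Proposition~\ref{prop:somecone}. Yet hard Lefschetz on all of $\K_C$ genuinely depends on that hypothesis: for $m=1$, $A=\R[x]/(x^2)$, $r=2$, $c_1=-x$, $c_2=0$, one gets $\deg_B((x+t\zeta)^2)=2t-t^2$, which vanishes at $x+2\zeta\in\K_B$. Route (a) is not an argument either. A general $x\in C^k$ is a sum of products of classes from $B$ and $C_{m-1}$. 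So $\deg_C(\ell^{N-2k}x^2)$, with $N=n+m(r-1)$, has cross terms that mixed Hodge--Riemann relations on $B$ do not control, since those only concern forms $\deg_B(\ell_1\cdots\ell_{N-2k}y^2)$ with $y\in B$.

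The missing idea is the Kashiwara--Kawai convexity theorem (Proposition~\ref{prop:convex}): every connected component of the locus $U_{C,\HR}\subseteq C^1$ where the Hodge--Riemann relations hold is convex. The paper applies the rescaling degeneration only at $\mu$ supported on sets $I$ with $|I|\le 1$. Thus only the packages for $A$ and $B$ are needed, and there is no induction on $m$. This gives overlapping, path-connected open subsets of $U_{C,\HR}\cap(h+\sum_i\R_{>0}\zeta_i)$ containing a neighborhood of $h$ and each edge ray $h+\R_{>0}\zeta_j$. These lie in a single component, which by convexity contains their convex hull. Every $h+\sum_i\mu_i\zeta_i$ is the average over $j$ of the points $h+m(\mu_j-\varepsilon)\zeta_j+\varepsilon\sum_i\zeta_i$, which lie in those edge neighborhoods for $\varepsilon$ small. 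So the hull is the whole slice, and the paper never needs to prove hard Lefschetz separately. Without this, or some substitute that propagates the Hodge--Riemann relations from neighborhoods of the edge rays to their convex hull, your plan stalls at exactly the step you flagged as the main obstacle.
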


The proof of Theorem~\ref{thm:multiproj} uses a deep result of Kashiwara and Kawai \cite{KK87}. The other ingredient is a new formula for Schur classes. In $C$, let $\Delta = \prod_{1 \le i < j \le m} (\zeta_i - \zeta_j)$. If $m=1$, then we have $\Delta = 1$. There is a pushforward map $p_* \colon C \to A$ which is adjoint to $p^*$ under the symmetric bilinear forms induced by Poincar\'{e} duality. If $A$ is the cohomology ring of a smooth complex projective variety and $C$ is the cohomology ring of a multiprojective bundle, then $p_*$ is the Gysin pushforward map. For a partition $\lambda$, let $s_{\lambda}(\zeta_1, \dotsc, \zeta_m)$ be the Schur polynomial of $\lambda$ evaluated at $\zeta_1, \dotsc, \zeta_m$. This is a class in $C^{|\lambda|}$, and it is the Schur class that we obtain if we use the elementary symmetric functions of $\zeta_1, \dotsc, \zeta_m$ as the $c_i$. 

\begin{proposition}\label{prop:pushforward}
Let $\lambda$ be a partition, and choose some $m$ with $\ell(\lambda) \le m \le r$. Then
$$(-1)^{\binom{m}{2}} m! s_{\lambda} = p_* \left(\Delta^2 (\zeta_1 \dotsb \zeta_m)^{r-m} s_{\lambda}(\zeta_1, \dotsc, \zeta_m)\right ).$$
\end{proposition}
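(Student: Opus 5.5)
The plan is to compute $p_*$ explicitly on monomials and then reduce the identity to a classical symmetric-function identity.

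\emph{Pushforward on monomials.} Since $C$ is an iterated projective bundle ring, $C=\bigoplus \zeta_1^{a_1}\cdots\zeta_m^{a_m}A$ with $0\le a_j\le r-1$. The degree map satisfies $\deg_C(\zeta_1^{r-1}\cdots\zeta_m^{r-1}x)=\deg_A(x)$, and adjointness of $p_*$ to $p^*$ gives
$$p_*(\zeta_1^{a_1}\cdots\zeta_m^{a_m}) = \prod_j p_{1*}(\zeta^{a_j}).$$
This factorization holds because the relations in different variables are independent over $A$: one can push forward one variable at a time. For a single projective bundle ring $B$, I would establish $p_*(\zeta^{r-1+i}) = s_i$, where $s_i$ is the $i$-th Segre class, i.e.\ the $i$-th complete homogeneous symmetric function of the Chern roots. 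This follows by induction from the relation $\zeta^r=c_1\zeta^{r-1}-\cdots$ together with the identity $\sum_i (-1)^i c_i s_{j-i}=0$ for $j>0$, with $p_*(\zeta^i)=0$ for $i<r-1$. Hence
$$p_*(\zeta_1^{a_1}\cdots\zeta_m^{a_m}) = \prod_j s_{a_j-r+1},$$
with $s_{<0}=0$.

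\emph{Reduction to an identity of formal symmetric functions.} Both sides are now determined by universal formulas in the $c_i$, so it suffices to work with formal Chern roots $x_1,\dots,x_r$. Write $\Delta^2 = (-1)^{\binom m2}\Delta\cdot\Delta'$, where $\Delta'=\prod_{i<j}(\zeta_j-\zeta_i)$ is the Vandermonde $\det(\zeta_i^{j-1})$ up to the indicated sign. Also use $\Delta\, s_\lambda(\zeta) = a_{\lambda+\delta}(\zeta)=\det(\zeta_i^{\lambda_j+m-j})$ (bialternant formula). Then
$$\Delta^2(\zeta_1\cdots\zeta_m)^{r-m}s_\lambda(\zeta) = \pm\, a_{\lambda+\delta}(\zeta)\,a_\delta(\zeta)\,(\zeta_1\cdots\zeta_m)^{r-m}.$$
Expand the product of the two alternants as a sum over pairs of permutations $\sigma,\tau$. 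Applying the monomial pushforward formula, each term becomes
$$\prod_j s_{\lambda_{\sigma(j)}+m-\sigma(j)+\tau(j)-1-(m-1)} = \prod_j s_{\lambda_{\sigma(j)}-\sigma(j)+\tau(j)}.$$
Since the pushforward of a product is a product of commuting Segre classes, reindex by $\sigma$: each $\sigma$ contributes $\operatorname{sgn}$-weighted $\sum_{\pi}\operatorname{sgn}(\pi)\prod_i s_{\lambda_i-i+\pi(i)}$, which is the Jacobi--Trudi determinant $\det(s_{\lambda_i+j-i}) = s_\lambda$. The $m!$ choices of $\sigma$ produce the factor $m!$, and the signs $\operatorname{sgn}\sigma\operatorname{sgn}\tau$ combine to $\operatorname{sgn}(\tau\sigma^{-1})$. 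This is where the hypothesis $\ell(\lambda)\le m$ enters: it is needed so that Jacobi--Trudi applies with an $m\times m$ matrix, padding $\lambda$ with zeros. The hypothesis $m\le r$ ensures $r-m\ge 0$, so the exponents are legitimate.

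\emph{Main obstacle.} The main difficulty is keeping track of the overall sign $(-1)^{\binom m2}$. Its sources are the orientation of $\Delta$ versus the standard Vandermonde, and the ordering of the columns in $a_\delta$ (exponents $m-j$ versus $j-1$); both conventions must be fixed consistently. I would also check the base case $p_*(\zeta^{r-1})=1$ against the normalization of $\deg_B$. The result can then be verified at $m=1$, where it reads $s_\lambda = p_*(\zeta^{r-1}s_{\lambda_1}(\zeta)) = s_{\lambda_1}$, and at $m=2$ with $\lambda=\emptyset$, where it should give $-2 = p_*((\zeta_1-\zeta_2)^2(\zeta_1\zeta_2)^{r-2})$. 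The latter holds because the cross term $-2\zeta_1^{r-1}\zeta_2^{r-1}$ pushes forward to $-2$, while $\zeta_1^{r}\zeta_2^{r-2}$ pushes forward to $s_1\cdot 0=0$.
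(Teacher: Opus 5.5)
Your proposal is correct and is essentially the paper's proof: both push forward monomials via $p_*(\zeta_1^{r-1+i_1}\cdots\zeta_m^{r-1+i_m}) = s_{i_1}\cdots s_{i_m}$, expand $\Delta\, s_\lambda(\zeta_1,\dotsc,\zeta_m) = \det(\zeta_i^{\lambda_j+m-j})$, and recognize the Jacobi--Trudi determinant, with the sign $(-1)^{\binom{m}{2}}$ coming from $\Delta = (-1)^{\binom{m}{2}}\det(\zeta_i^{j-1})$. The only cosmetic difference is how the factor $m!$ arises. The paper first computes the pushforward against the single monomial $\zeta_2\zeta_3^2\cdots\zeta_m^{m-1}$ and then uses $S_m$-equivariance of $p_*$ to rebuild the full Vandermonde. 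You instead expand the second Vandermonde directly and obtain $m!$ by reindexing with $\pi=\tau\sigma^{-1}$.
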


We now sketch the proof of Theorem~\ref{thm:main}. Fix a partition $\lambda$ with size $n - 2k$ and a class $a \in A^k$ satisfying the hypothesis of Theorem~\ref{thm:main}. We induct on the number of parts in the rectangular decomposition of $\lambda$.
The base case is when $\lambda$ is the empty partition,
when Theorem~\ref{thm:main} reduces to the Hodge--Riemann relations for $A$ in degree $n/2$. To do the inductive step, we consider the multiprojective bundle ring $C$ given by taking $m = \ell(\lambda)$. Let $\overline{\lambda}$ be the partition obtained by removing the first block of the rectangular decomposition from $\lambda$. If $k_1$ is the number of columns of $\lambda$ with length $m$, then  $s_{\lambda}(\zeta_1, \dotsc, \zeta_m) = (\zeta_1 \dotsb \zeta_m)^{k_1} s_{\overline{\lambda}}(\zeta_1, \dotsc, \zeta_m)$. Proposition~\ref{prop:pushforward} then gives the formula
$$(-1)^{\binom{m}{2}} m! \deg_A(a^2s_{\lambda}) = \deg_C(a^2\Delta^2 (\zeta_1 \dotsb \zeta_m)^{r - m + k_1} s_{\overline{\lambda}}(\zeta_1, \dotsc, \zeta_m)).$$
Let $D \coloneqq C/\operatorname{ann}((\zeta_1 \dotsb \zeta_m)^{r - m + k_1})$ be the quotient of $C$ by the annihilator of $(\zeta_1 \dotsb \zeta_m)^{r - m + k_1}$; this algebra satisfies Poincar\'{e} duality with respect to a natural degree map. We have 
$$\deg_C(a^2\Delta^2 (\zeta_1 \dotsb \zeta_m)^{r - m + k_1} s_{\overline{\lambda}}(\zeta_1, \dotsc, \zeta_m)) = \deg_{D}(a^2 \Delta^2 s_{\overline{\lambda}}(\zeta_1, \dotsc, \zeta_m)).$$
By the descent lemma of \cite{CKS87} and \cite{KK87} and Theorem~\ref{thm:multiproj}, $D$ has the K\"{a}hler package with respect to the image of $\mathcal{K}_C$. It also follows from Theorem~\ref{thm:multiproj} that $D[\xi]/((\xi - \zeta_1) \dotsb (\xi - \zeta_m))$ satisfies the K\"{a}hler package with respect to the expected cone; see Proposition~\ref{prop:nefdirectsum}. That $a$ satisfies the hypothesis of Theorem~\ref{thm:main} in $A$ implies that $a \Delta$ satisfies the hypothesis of Theorem~\ref{thm:main} in $D$, and the result follows from induction.

The proof in \cite{FultonLaz} reduces the positivity of Schur classes of ample vector bundles to the Bloch--Gieseker theorem \cite{BlochGieseker}.
In \cite[pg. 44]{FultonLaz}, Fulton and Lazarsfeld ask for an alternative argument avoiding that. In our argument, we obtain the Bloch--Gieseker theorem as a special case, answering their question. 

\subsection{Applications to matroids}

We apply Theorem~\ref{thm:main} to prove a conjecture about matroids. We first describe the motivating geometry. Let $H_1, \dotsc, H_n$ be hyperplanes in an $r$-dimensional vector space $L$ over a field $k$. We assume that $\cap_i H_i = 0$. Choosing a linear form defining each hyperplane realizes $L$ as a subspace of $k^n$, where the $H_i$ are the restrictions of the coordinate hyperplanes. We have a corresponding point $[L]$ of the Grassmannian $\operatorname{Gr}(r,n)$. The torus $T = \mathbb{G}_m^n$ acts on $\operatorname{Gr}(r,n)$; the points of $T \cdot [L]$ correspond to different choices of defining linear forms. 

The Grassmannian $\operatorname{Gr}(r,n)$ has a stratification by Schubert varieties indexed by partitions $\lambda$ which fit inside an $r \times (n-r)$ box. The Schubert variety $\Omega_{\lambda}$ corresponding to $\lambda$ has \emph{dimension} equal to $|\lambda|$. Let $\mathcal{S}^{\vee}$ be the dual of the universal subbundle on $\operatorname{Gr}(r,n)$. In the homology of $\operatorname{Gr}(r,n)$, we have the identity
$$\deg(s_{\mu}(\mathcal{S}^{\vee}) \smallfrown [\Omega_{\lambda}]) = \begin{cases} 1 & \text{if }\lambda = \mu, \\ 0 & \text{otherwise}.\end{cases}$$
The classes $[\Omega_{\lambda}]$ form a basis for the homology of $\operatorname{Gr}(r,n)$; see \cite[Chapter 14]{FultonIntersection}.
We can expand the fundamental class of the torus-orbit closure $\overline{T \cdot [L]}$ in this basis:
$$[\overline{T \cdot [L]}] = \sum_{\lambda} a_{\lambda} [\Omega_{\lambda}].$$
We see that $a_{\lambda}$ is the degree of the Schur class $s_{\lambda}$ of the restriction of $\mathcal{S}^{\vee}$ to $\overline{T \cdot [L]}$. It follows, for example from \cite{FultonLaz}, that the $a_{\lambda}$ are nonnegative integers. 

The $a_{\lambda}$ are invariants of the hyperplane arrangement.  The \emph{matroid} of the hyperplane arrangement corresponding to $L \subseteq k^n$ is the collection of subsets $B$ of $\{1, \dotsc, n\}$ of size $r$ such that the composition
$$L \hookrightarrow k^n \twoheadrightarrow k^B$$
is an isomorphism, where the map $k^n \twoheadrightarrow k^B$ is the coordinate projection. 
In \cite{FinkSpeyer}, Fink and Speyer used equivariant localization to show that the $a_{\lambda}$ depend only on the matroid of $L$. Furthermore, they gave a (complicated) formula for the class of $\overline{T \cdot [L]}$ which makes sense for any matroid. This allows one to define an invariant which assigns a rank $r$ matroid $\mathrm{M}$ on $\{1, \dotsc, n\}$ to $a_{\lambda}(\mathrm{M})$, the coefficient of the expansion of Fink and Speyer's formula in the Schubert basis. This integer is called the \emph{Schur coefficient} of $\mathrm{M}$. 
However, from Fink and Speyer's formula it is no longer obvious that Schur coefficients are nonnegative. The nonnegativity of Schur coefficients was conjectured implicitly in \cite{Speyerg} and explicitly in \cite[Conjecture 9.13]{BergetFink}.

Schur coefficients of matroids have attracted significant attention. They were first studied by Klyachko \cite{KlyachkoOrbit}, who calculated the Schur coefficients of a uniform matroid, i.e., for hyperplanes in general position. 
An elegant geometric explanation of Klyachko's formula was given in \cite{Lian}. Klyachko's computation was generalized to sparse paving matroids by Hamre \cite{Hamre}, who proved the nonnegativity of Schur coefficients in this case. Schur coefficients for rank $2$ matroids were computed in \cite{EFG}, and Schur coefficients for snake matroids and lattice path matroids were computed in \cite{HSVV}. Perhaps the most attractive results on Schur coefficients are the following explicit formulas.

\begin{example}
In \cite[Theorem 5.1]{Speyerg}, Speyer showed that if $\lambda = (n-r, 1, \dotsc, 1)$ is the hook, then $a_{\lambda}(\mathrm{M})$ is Crapo's beta invariant of a matroid \cite{crapo}. This counts the number of bases of $\mathrm{M}$ according to certain statistics: it is the number of bases with external activity $0$ and internal activity $1$. 
\end{example}

\begin{example}
If $n \ge 2r+1$ and $\lambda = (n-r, r-1)$, then it follows from \cite[Corollary 6.16]{BF24} and Pieri's rule that $a_{\lambda}(\mathrm{M})$ is equal to the number of bases of the diagonal Dilworth truncation $D(\mathrm{M}, \mathrm{M})$ with generalized external activity $0$, a statistic introduced in \cite{BF24}. 
\end{example}

An alternative description of Schur coefficients of matroids was given in \cite{BEST}. Suppose that $L \subseteq k^n$ is a realization of a \emph{connected} matroid. This means that the stabilizer of $[L]$ in $T$ is as small as possible, i.e., $\dim T \cdot [L] = n-1$. Then there is a distinguished resolution of singularities $f \colon X_n \to \overline{T \cdot [L]}$ called the \emph{permutohedral toric variety}. By the projection formula, one has
$$a_{\lambda}(\mathrm{M}) = \deg(s_{\lambda}(f^* \mathcal{S}^{\vee}) \smallfrown [X_n]).$$
In \cite{BEST}, the authors gave a formula for the Chern classes of $f^* \mathcal{S}^{\vee}$ which depends only on the matroid of $L$. This formula makes sense for any matroid $\mathrm{M}$, and they used it to define classes $c_1(\mathcal{S}_{\mathrm{M}}^{\vee}), \dotsc, c_r(\mathcal{S}_{\mathrm{M}}^{\vee})$ in $H^*(X_n; \mathbb{R})$, the cohomology ring of the permutohedral toric variety. The ring $H^*(X_n; \mathbb{R})$ satisfies the K\"{a}hler package with respect to the ample cone $\mathcal{K}_{X_n}$ of the permutohedral toric variety. The following result was recently proved by Partida and the first author.

\begin{proposition}\cite[Theorem 1.4]{LarsonPartida}\label{prop:LP}
Let $\mathrm{M}$ be a matroid of rank $r$ on $\{1, \dotsc, n\}$. Then $H^*(X_n)[\zeta]/(\zeta^r - c_1(\mathcal{S}_{\mathrm{M}}^{\vee}) \zeta^{r-1} + \dotsb + (-1)^r c_r(\mathcal{S}_{\mathrm{M}}^{\vee}))$ satisfies the K\"{a}hler package with respect to the cone $\pi^*(\mathcal{K}_{X_n}) + \mathbb{R}_{>0} \zeta$. 
\end{proposition}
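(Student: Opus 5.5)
The plan is to treat the realizable case geometrically and then reduce the general case to a combinatorial Kähler package for a suitable fan. In the realizable case, $L \subseteq k^n$ with $k=\mathbb{C}$, the bundle $f^*\mathcal{S}^{\vee}$ on $X_n$ is a quotient of the trivial bundle $(\mathbb{C}^n)^{\vee}\otimes \mathcal{O}$. It is therefore globally generated and hence nef, so the relative $\mathcal{O}(1)$ on $\mathbb{P}_{X_n}(f^*\mathcal{S}^\vee)$ is nef. The class $\zeta$ is also relatively ample. Hence, for $h \in \mathcal{K}_{X_n}$ and $t>0$, the class $\pi^*h + t\zeta$ is ample. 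The ring in the statement is $H^*(\mathbb{P}_{X_n}(f^*\mathcal{S}^\vee);\R)$, by the projective bundle formula and the \cite{BEST} description of the Chern classes. So the Kähler package with respect to $\pi^*(\mathcal{K}_{X_n})+\R_{>0}\zeta$ is the classical hard Lefschetz theorem and the Hodge--Riemann relations for a smooth projective variety.

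For an arbitrary matroid I would realize the ring combinatorially. First, I would identify $H^*(X_n)[\zeta]/(\zeta^r - c_1(\mathcal{S}_{\mathrm{M}}^{\vee})\zeta^{r-1}+\dotsb)$ with the Chow ring of a simplicial fan $\Sigma_{\mathrm{M}}$ of dimension $n+r-2$. This fan is the tropical analogue of the projectivized toric vector bundle attached to $\mathcal{S}^\vee_{\mathrm{M}}$. Its cones should be built from the permutohedral cones together with the flags of flats of $\mathrm{M}$ that encode the Klyachko filtrations. One checks the presentation by matching generators and relations: the piecewise-linear functions on the base give $H^*(X_n)$, and the extra ray functions give $\zeta$. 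The relation $\sum (-1)^i c_i\zeta^{r-i}=0$ should come from the \cite{BEST} formula for $c_i(\mathcal{S}^\vee_{\mathrm{M}})$ as sums over flats. Second, I would show that the classes in $\pi^*(\mathcal{K}_{X_n})+\R_{>0}\zeta$ correspond to strictly convex piecewise-linear functions on $\Sigma_{\mathrm{M}}$. In the realizable case this is the ampleness above, and in general it should be a direct check on the support of $\Sigma_{\mathrm{M}}$.

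Third, I would prove the Kähler package for the Chow ring of $\Sigma_{\mathrm{M}}$ in the style of \cite{AHK18,ADH}. Poincaré duality would come from $\Sigma_{\mathrm{M}}$ being a balanced tropical fan. For hard Lefschetz and the Hodge--Riemann relations, I would induct on $n$ by deletion of an element $i$. A flip or semismall decomposition expresses the Chow ring of $\Sigma_{\mathrm{M}}$ as a direct sum of the ring for $\mathrm{M}\setminus i$ (pulled back) and shifted copies of rings attached to stars of rays, which are products of smaller rings of the same type. Hodge--Riemann for the pieces, plus a limiting argument from a boundary class, would then give Hodge--Riemann on the whole cone, using that the cone is connected and hard Lefschetz holds throughout.

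I expect the main obstacle to be the construction of $\Sigma_{\mathrm{M}}$ itself. One must find a fan whose Chow ring is exactly the projective bundle ring (not a blowup of it), prove that it is balanced, and verify that the stars appearing in the deletion decomposition are again of the same (product) form so the induction closes. If this direct identification fails, my fallback is to work with a finer fan whose Chow ring contains the projective bundle ring as a subring compatible with the degree maps. I would then deduce the Kähler package for the subring via the descent lemma of \cite{CKS87,KK87}, since the cone $\pi^*(\mathcal{K}_{X_n})+\R_{>0}\zeta$ lands in the nef cone of the finer fan.
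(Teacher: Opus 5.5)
Your realizable case is correct. $\mathcal{S}_L^\vee$ is a quotient of a trivial bundle, so $\pi^*h+t\zeta$ is ample on $\mathbb{P}_{X_n}(\mathcal{S}_L^\vee)$, and classical Hodge theory applies. But this only covers matroids realizable over $\mathbb{C}$. The paper does not prove the statement; it imports it from Larson--Partida. For general $\mathrm{M}$ your argument has a genuine gap.

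It helps to isolate what actually has to be shown:
\begin{itemize}
\item Poincar\'e duality for $B$ is automatic (Proposition~\ref{prop:PDprojbun}).
\item $B$ satisfies the Hodge--Riemann relations on some nonempty open subcone of $\mathcal{K}_B$ (Proposition~\ref{prop:somecone}).
\item The Hodge--Riemann locus is a union of connected components of the hard Lefschetz locus.
\end{itemize}
Since $\mathcal{K}_B$ is convex, the statement is therefore \emph{equivalent} to hard Lefschetz for every class $\pi^*h+t\zeta$ with $h\in\mathcal{K}_{X_n}$ and $t>0$. Your third step takes exactly this as an input (``hard Lefschetz holds throughout''). Nothing earlier in the proposal supplies it for non-realizable $\mathrm{M}$.

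Neither of the two mechanisms you offer for supplying it works as stated.

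\emph{The fan $\Sigma_{\mathrm{M}}$.} No fan with Chow ring isomorphic to $B$ is constructed, and the natural candidate cannot be one. Over a point $t$ of the open torus of $X_n$, the fibre of $\mathbb{P}_{X_n}(\mathcal{S}_L^\vee)$ is the space of lines in $tL$. The boundary divisors where the tautological line lies in a coordinate hyperplane restrict there to the hyperplane arrangement of $\mathrm{M}$. This arrangement is not normal crossings once, say, $r\ge 3$ and three elements span a rank-$2$ flat. So a fan built from permutohedral cones and flags of flats models a wonderful-type blowup of the projective bundle. Its Chow ring has extra degree-one classes (the exceptional divisors) and is strictly larger than $B$.

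\emph{The fallback.} This misuses the descent lemma. Proposition~\ref{prop:descent} gives the K\"ahler package for a quotient $D/\ann(\ell)$, which has smaller socle degree, and only with respect to the image of the \emph{open} cone $\mathcal{K}_D$. A degree-preserving subring $B\subseteq D$ is not of this form. Moreover, $\pi^*h+t\zeta$ becomes a nef but non-ample class on the blowup, and hard Lefschetz on $D$ typically fails for such classes. Getting from the K\"ahler package of $D$ to hard Lefschetz on $B$ needs a further idea. One option is an orthogonal decomposition $D=B\oplus B^{\perp}$ of $B$-modules together with independent control of the signature on $B^{\perp}$, as in semismall decompositions. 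Another is a direct positivity argument for $\pi^*h+t\zeta$ on $B$ itself. Your deletion induction does not provide either: its summands, and why they are ``of the same type,'' are never identified.
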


\begin{corollary}
Let $\mathrm{M}$ be a matroid of rank $r$ on $\{1, \dotsc, n\}$ and let $\lambda$ be a partition which fits inside the $r \times (n-r)$ box. Then $a_{\lambda}(\mathrm{M}) \ge 0$. 
\end{corollary}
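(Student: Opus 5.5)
The corollary should follow from Proposition~\ref{prop:LP} combined with Example~\ref{ex:FL}, i.e.\ the $k=0$ case of Theorem~\ref{thm:main}. I would argue as follows.

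\textbf{Setup.} Take $A = H^*(X_n;\mathbb{R})$ with its degree map and the cone $\mathcal{K}_A = \mathcal{K}_{X_n}$; as noted above, $A$ has the K\"{a}hler package with respect to this cone. Here $\dim X_n = n-1$, so the top degree of $A$ is $n-1$. Set $c_i = c_i(\mathcal{S}_{\mathrm{M}}^{\vee})$ for $i = 1, \dotsc, r$. By Proposition~\ref{prop:LP}, the projective bundle ring $B$ has the K\"{a}hler package with respect to $\mathcal{K}_B = \pi^*(\mathcal{K}_{X_n}) + \mathbb{R}_{>0}\zeta$, so the hypotheses of Theorem~\ref{thm:main} hold.

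\textbf{Reduction to Example~\ref{ex:FL}.} Next I must identify $a_\lambda(\mathrm{M})$ with $\deg_A(s_\lambda)$, where $s_\lambda \in A^{|\lambda|}$ is the Schur class built from $c_1, \dotsc, c_r$.
\begin{itemize}
\item By the projection formula recalled above, in the realizable connected case this identification is just the definition.
\item For general $\mathrm{M}$, I would invoke the result of \cite{BEST}: the Fink--Speyer class, paired with the Schubert-dual classes $s_\lambda(\mathcal{S}^\vee)$, equals $\deg(s_\lambda(\mathcal{S}_{\mathrm{M}}^\vee))$ on $X_n$ whenever $\mathrm{M}$ is connected.
\end{itemize}
The two degrees to keep straight are these. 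Schubert classes in $\operatorname{Gr}(r,n)$ paired against a class of dimension $n-1$ are nonzero only when $|\lambda| = n-1$, and for such $\lambda$ Example~\ref{ex:FL} (with $k=0$) gives $\deg_A(s_\lambda) \ge 0$. For any other $\lambda$ in the $r\times(n-r)$ box, the coefficient $a_\lambda(\mathrm{M})$ is zero.

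\textbf{Disconnected and degenerate cases.} If $\mathrm{M}$ is disconnected, the torus orbit has dimension $n - c$, where $c$ is the number of connected components. I would handle this in one of two ways:
\begin{itemize}
\item Use the convention that the Fink--Speyer class then has degree $n-c$ and its Schur coefficients are those of $s_\lambda(\mathcal{S}_{\mathrm{M}}^\vee)$ capped appropriately. With the \cite{BEST} convention, the classes are still defined on $X_n$, and $a_\lambda$ is the degree of $s_\lambda$ times a suitable power of a fixed class.
\item More simply, note that the matroid is a direct sum and the coefficients vanish or factor by a product formula for direct sums.
\end{itemize}
Loops and coloops are handled the same way: they contribute trivially and reduce $n$.

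\textbf{Main obstacle.} The K\"{a}hler package input is already provided by Proposition~\ref{prop:LP}. The genuine obstacle is the bookkeeping that matches the Fink--Speyer definition of $a_\lambda(\mathrm{M})$ with $\deg(s_\lambda(\mathcal{S}^\vee_{\mathrm{M}}))$ for nonrealizable matroids, including the disconnected case. For this I would cite \cite{BEST}, which proves that equality via tautological classes.
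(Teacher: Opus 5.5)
Your proposal is correct and follows the paper's proof. In the connected case you identify $a_\lambda(\mathrm{M})$ with $\deg(s_\lambda)$ on $X_n$ (the paper cites \cite[Remark 9.15]{BergetFink} for this rather than \cite{BEST}), note that only $|\lambda| = n-1$ matters, and conclude from Proposition~\ref{prop:LP} together with the $k=0$ case of Theorem~\ref{thm:main} (Example~\ref{ex:FL}). For disconnected matroids the paper likewise reduces to the connected case via \cite[Theorem 4.1]{Hamre} or a product of permutohedral varieties, so your second option is the matching one; just note that the direct-sum formula is the Littlewood--Richardson-weighted sum $a_\lambda(\mathrm{M}_1 \oplus \mathrm{M}_2) = \sum_{\mu,\nu} c^{\lambda}_{\mu,\nu}\, a_\mu(\mathrm{M}_1)\, a_\nu(\mathrm{M}_2)$ rather than a single product, which still gives nonnegativity because $c^{\lambda}_{\mu,\nu} \ge 0$.
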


\begin{proof}
Suppose that $\mathrm{M}$ is connected, so $a_{\lambda}(\mathrm{M}) = 0$ unless $|\lambda| = n-1$. Choose a partition $\lambda$ which fits inside the $r \times (n-r)$ rectangle with $|\lambda| = n-1$. If we define the Schur class $s_{\lambda} \in H^*(X_n; \mathbb{R})$ using $c_1(\mathcal{S}_{\mathrm{M}}^{\vee}), \dotsc, c_r(\mathcal{S}_{\mathrm{M}}^{\vee})$, then it is not hard to show that
$$a_{\lambda}(\mathrm{M}) = \deg(s_{\lambda});$$
see \cite[Remark 9.15]{BergetFink}. The result follows in this case from Theorem~\ref{thm:main} and Proposition~\ref{prop:LP}. The result can be reduced to the case of connected matroids \cite[Theorem 4.1]{Hamre}, and it is also not hard to directly prove the disconnected case by working on a product of permutohedral toric varieties. 
\end{proof}

The above argument and Theorem~\ref{thm:main} also answer \cite[Question 1.4]{BEST}, which asks for log-concavity properties satisfied by Schur classes. Furthermore, \cite[Theorem 1.4]{LarsonPartida} is considerably more general than Proposition~\ref{prop:LP}, and it can be used to answer analogous questions arising from \cite{EHL} and \cite{EFLS}. See \cite[Example 1.2 and Example 1.3]{LarsonPartida}.

In \cite{HuhICM1}, Huh asks for combinatorial applications of the higher Hodge--Riemann relations. As discussed above, the proof of Theorem~\ref{thm:main} crucially uses the higher Hodge--Riemann relations, even in the case of Fulton--Lazarsfeld positivity. The above argument therefore gives a combinatorial application of the higher Hodge--Riemann relations. 

\medskip

The paper is organized as follows. In Section~\ref{sec:symmetric}, we prove some preliminary results about symmetric functions which will be needed in the proof of Theorem~\ref{thm:main}. In Section~\ref{sec:projbund}, we develop some properties of projective bundle rings and prove Theorem~\ref{thm:multiproj} and Proposition~\ref{prop:pushforward}. In Section~\ref{sec:positivity}, we prove Theorems~\ref{thm:main} and ~\ref{thm:strict}. In Section~\ref{sec:variants}, we consider some variants of the main results.

\subsection*{Acknowledgements}

We thank Dave Anderson, Anders Buch, Teddy Gonzales, Rob Lazarsfeld, Julius Ross, Christian Schnell, and Hunter Spink for helpful conversations. Claude was used to help with numerical experiments and for proofreading, but all ideas in this paper were generated by the authors. 

\section{Symmetric functions}\label{sec:symmetric}

In this section, we develop some properties of symmetric functions that will be used in the proof of Theorem~\ref{thm:main}; see \cite[Chapter 7]{stanley-ec2} for a comprehensive reference.  
Let $\Lambda = \Lambda_\Z$ 
be the graded $\Z$-algebra of symmetric functions in infinitely many variables. Recall that  the Schur functions $\{ s_\lambda : \lambda \textrm{ a partition} \}$ form a $\Z$-basis of $\Lambda$, and 
multiplication in $\Lambda$ is determined by 
\[
s_\lambda s_\mu = \sum_{\nu} c_{\lambda,\mu}^\nu s_{\nu},
\]
where $\lambda, \mu$, and $\nu$ are partitions, and  $c_{\lambda,\mu}^\nu \in \Z_{\ge 0}$ is the corresponding Littlewood--Richardson coefficient. 
Here $s_\lambda$ is  homogeneous of degree $|\lambda|$.
Also, recall that $c_{\lambda,\mu}^\nu = 0$ unless $\lambda \subseteq \nu$ and $\mu \subseteq \nu$. 
For example, when $\lambda = (j)$ for some $j \in \Z_{\ge 0}$, we write $h_j = s_{\lambda}$ for the corresponding complete homogeneous symmetric function.
When $\lambda = (1^j)$ for  some $j \in \Z_{\ge 0}$, we write $e_j = s_{\lambda}$ for the corresponding elementary symmetric function. Recall that  $\Lambda$ is the polynomial ring on either $h_1, h_2, \dotsc$ or on $e_1, e_2, \dotsc$. 
We also recall Pieri's rule, that
$$h_j s_{\lambda} = \sum_{\mu} s_{\mu},$$
where the sum is over all partitions $\mu$ whose Young diagram is obtained by adding a horizontal $j$-strip to the Young diagram of $\lambda$, i.e., adding $j$ boxes, no two of which lie in the same column.

Considering length of partitions induces an increasing filtration of $\Lambda$ defined by
\[
\Lambda^{(\le m)} := \bigoplus_{\ell(\lambda) \le m } \Z \cdot s_\lambda,  
\]
for all $m \in \Z_{\ge 0}$. In particular, $\Lambda^{(\le 0)} = \Z \cdot s_{\emptyset} = \Z$.  
As an abelian group, 
$\Lambda^{(\le m)}$ is naturally isomorphic to the graded ring $\Z[x_1,\ldots,x_m]^{S_m}$ of symmetric functions in $m$ variables, with $s_\lambda \in \Lambda^{(\le m)}$ corresponding to $s_\lambda(x_1,\ldots,x_m) \in \Z[x_1,\ldots,x_m]^{S_m}$ for $\ell(\lambda) \le m$.  In particular, this isomorphism induces a natural ring structure on $\Lambda^{(\le m)}$. Explicitly, 
multiplication is given by the product
\[
\star_m \colon \Lambda^{(\le m)} \times \Lambda^{(\le m)} \to \Lambda^{(\le m)}, \quad
s_\lambda \star_m s_\mu = \sum_{\ell(\nu) \le m } c_{\lambda,\mu}^\nu s_{\nu}, 
\]
for any partitions $\lambda,\mu$ with $\ell(\lambda),\ell(\mu) \le m$. We have a surjective ring homomorphism 
$$\pi_m \colon  \Lambda \to \Lambda^{(\le m)}, \quad \pi_m(s_\lambda) = \begin{cases}
	s_\lambda &\textrm{if } \ell(\lambda) \le m, \\
	0         &\textrm{otherwise. }
\end{cases} 
$$
In what follows, we often consider $\star_m$ acting on elements of $\Lambda^{(\le m')}$ and $\Lambda^{(\le m'')}$  for $m', m'' \le m$ via the natural inclusions. 

\begin{example}\label{ex:rectangle}
	If $\lambda = (k,k,\ldots,k)$ is a rectangle with $\ell(\lambda) = m$, 
	then $s_\lambda(x_1,\dots,x_m) = (x_1\cdots x_m)^k$, and, for any partition $\mu$ with $\ell(\mu) \le m$, $s_\lambda \star_m s_\mu = s_{\lambda + \mu}$. 
\end{example}

Let $\lambda$ be a partition, and consider the ideal in $\Lambda$
\[
\Lambda_\lambda := \bigoplus_{  \mu \supseteq \lambda  } \Z \cdot s_{\mu} \subseteq\Lambda.
\]
For a nonnegative integer $m$, consider the ideal in $\Lambda^{(\le m)}$
\[
\Lambda_\lambda^{(\le m)} := \pi_m(\Lambda_\lambda) =  \bigoplus_{ \substack{ \mu \supseteq \lambda \\ \ell(\mu) \le m} } \Z \cdot s_{\mu} \subseteq \Lambda^{(\le m)}. 
\]
We can view $\Lambda_{\lambda}^{(\le m)}$ as a subset of $\Lambda$ or as a subset of $\Lambda_{\lambda}^{(\le m')}$ for any $m' \ge m$ via the natural inclusion. We need the following basic lemma.

\begin{lemma}\label{lem:idealgenerated}
	Let $\lambda$ be a partition with $\ell(\lambda) = m$. 
	Then the ideal 	$\Lambda_\lambda$ of $\Lambda$ is generated by $\Lambda_\lambda^{(\le m)}$. 
	In particular, for any $m' \ge m$, the ideal $\Lambda_\lambda^{(\le m')}$ in   $\Lambda^{(\le m')}$ is generated by $\Lambda_\lambda^{(\le m)}$.
\end{lemma}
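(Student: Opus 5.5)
We need to show that every $s_\mu$ with $\mu \supseteq \lambda$ lies in the ideal of $\Lambda$ generated by $\{s_\nu : \nu \supseteq \lambda,\ \ell(\nu) \le m\}$. The plan is to induct on $\ell(\mu)$, and for fixed length on a suitable order such as reverse dominance or the size of the part of $\mu$ below row $m$.

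\textbf{Case $\ell(\mu) \le m$.} Then $s_\mu$ is itself a generator, so there is nothing to prove.

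\textbf{Case $\ell(\mu) > m$.} Write $\mu = (\mu_1,\dots,\mu_m,\mu_{m+1},\dots)$. Let $\alpha = (\mu_1,\dots,\mu_m)$, which contains $\lambda$ since $\ell(\lambda)=m$. Let $\beta = (\mu_{m+1},\dots,\mu_{\ell(\mu)})$ be the remaining rows. Consider the product $s_\alpha s_\beta$. By the Littlewood--Richardson rule, $s_\mu$ occurs in $s_\alpha s_\beta$ with coefficient $1$: put the boxes of $\beta$ row by row below $\alpha$, with row $i$ of $\beta$ filled entirely with the letter $i$. The plan is to show that every other $\nu$ appearing satisfies two conditions:
\begin{enumerate}[(a)]
\item $\nu \supseteq \alpha \supseteq \lambda$, which is automatic since $c^\nu_{\alpha\beta} \neq 0$ forces $\alpha \subseteq \nu$;
\item $\nu$ is either of length $<\ell(\mu)$ or strictly larger than $\mu$ in dominance order.
\end{enumerate}
For (b), the standard fact is that every $\nu$ in $s_\alpha s_\beta$ satisfies $\nu \le \alpha+\beta$ (componentwise sum) and $\nu \ge \alpha\cup\beta$ (union of parts, sorted) in dominance order. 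Since $\alpha_m \ge \beta_1$, we have $\alpha\cup\beta = \mu$, so $\mu$ is the dominance-minimal term. Also $\ell(\nu) \le \ell(\alpha)+\ell(\beta) = \ell(\mu)$.

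We then induct downward on dominance among partitions of the fixed size $|\mu|$ with length at least $m+1$. The set of such partitions is finite, so we can take a dominance-maximal counterexample $\mu$. Then
\[
s_\mu = s_\alpha s_\beta - \sum_{\nu \neq \mu} c^\nu_{\alpha\beta}\, s_\nu ,
\]
where $s_\alpha$ is a generator (as $\ell(\alpha)\le m$ and $\alpha\supseteq\lambda$). Each remaining $\nu$ contains $\lambda$ and strictly dominates $\mu$. It is therefore either of length $\le m$, hence a generator, or covered by maximality of the counterexample. This contradiction gives the first statement.

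\textbf{Second statement.} Apply the surjective ring homomorphism $\pi_{m'}$. It sends generators of $\Lambda_\lambda$ to elements of $\Lambda^{(\le m)}_\lambda$ (or zero), and sends $\Lambda_\lambda$ onto $\Lambda^{(\le m')}_\lambda$. The image of an ideal generated by a set, under a surjective ring map, is the ideal generated by the image of that set. Since $\pi_{m'}$ is the identity on $\Lambda^{(\le m)}_\lambda$ (as $m \le m'$), the claim follows.

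\textbf{Main obstacle.} The only nontrivial input is the dominance bound $\nu \ge \alpha\cup\beta$ for constituents of $s_\alpha s_\beta$. If citing it is undesirable, I would instead argue directly with Jacobi--Trudi: expand $s_\mu = \det(h_{\mu_i+j-i})$ along the first $m$ rows by Laplace expansion. Each term is then a product of an $m\times m$ minor in the $h$'s with a complementary minor. Show that the $m\times m$ minors are $\pm s_\gamma$ with $\gamma \supseteq \lambda$; this holds because the row indices are $\mu_1,\dots,\mu_m$, which satisfy $\mu_i \ge \lambda_i$. The dominance route is cleaner, so I would try it first.
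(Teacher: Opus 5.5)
Your proof is correct, but it is organized differently from the paper's.

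\textbf{The paper's route.} It peels off one row at a time using only Pieri's rule. For $\ell(\mu)=j>m$ it sets $\mu'=(\mu_1,\dots,\mu_{j-1})$ and notes $s_{\mu'}\in I$ by induction on length. It then writes $h_{\mu_j}s_{\mu'}=s_\mu+\sum_\nu s_\nu$, where every other $\nu$ has $\nu_j<\mu_j$, so a secondary induction on the last part finishes.

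\textbf{Your route.} You split off all rows below row $m$ at once, so the factor $s_\alpha$ with $\alpha=(\mu_1,\dots,\mu_m)$ is already a generator. You control the other terms of $s_\alpha s_\beta$ by two facts:
\begin{itemize}
\item the dominance bound $\nu\trianglerighteq\alpha\cup\beta=\mu$;
\item $c^{\mu}_{\alpha\beta}=1$, which holds because $\mu/\alpha$ is a translate of the straight shape $\beta$.
\end{itemize}
The induction is then purely downward on dominance within a fixed degree. The induction on $\ell(\mu)$ that you announce is never actually used.

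\textbf{What each buys.} Your route gives a cleaner one-step triangularity: $s_\mu\equiv s_\alpha s_\beta$ modulo the span of $s_\nu$ with $\nu\triangleright\mu$ and $\nu\supseteq\lambda$. The cost is invoking the Littlewood--Richardson rule and the dominance bounds, e.g.\ by applying $\omega$ to $\nu'\trianglelefteq\alpha'+\beta'$. The paper's route needs only Pieri's rule, which it uses elsewhere anyway. Both keep the coefficient of $s_\mu$ equal to $1$, which is what makes the argument valid over $\mathbb{Z}$. Your treatment of the second statement via $\pi_{m'}$ matches the paper's.

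\textbf{A caution about your fallback.} In the Laplace expansion of Jacobi--Trudi along the first $m$ rows, the $m\times m$ minors are in general skew Schur functions, not single $\pm s_\gamma$. For instance, for $\mu=(3,1,1)$ and $m=2$, the minor on columns $\{1,3\}$ is $h_3h_2-h_5=s_{(4,1)}+s_{(3,2)}$. That route can be repaired, since every constituent still contains $(\mu_1,\dots,\mu_m)$ and has at most $m$ rows, but it needs that extra argument.
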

\begin{proof}
	The second statement follows from the first by applying $\pi_{m'}$. 
	For the first statement, let $I$ be the ideal in $\Lambda$ generated by 
	$\Lambda_\lambda^{(\le m)}$; clearly
	$I \subseteq \Lambda_\lambda$. We prove the reverse inclusion by induction,  firstly on the length of a partition, and then on the number of elements in its last part.
	Consider a partition $\mu$ with $\mu \supseteq \lambda$. Let $j = \ell(\mu) \ge  m$, and write  $\mu = (\mu_1,\ldots,\mu_j)$. 
	If $j = m$, then $s_\mu \in I$ by definition. Assume that $j > m$, and let $\mu' = (\mu_1,\ldots,\mu_{j - 1})$.
	Then $\lambda \subseteq \mu'$, and hence $s_{\mu'} \in I$ by induction on the length of a partition. 
	By Pieri's rule, $h_{\mu_j}s_{\mu'} \in I$ is the sum of Schur functions whose partitions are obtained from $\mu'$ by adding a horizontal $\mu_j$-strip to its Young diagram. Hence  $h_{\mu_j}s_{\mu'} = s_{\mu} + \sum_{\nu} s_{\nu}$, where each $\nu = (\nu_1,\ldots,\nu_j)$ satisfies $0 \le \nu_j < \mu_j$. By induction, each $s_{\nu}$ is contained in $I$, and so $s_{\mu}$ is contained in $I$. 
\end{proof}

Consider a  partition $\lambda$ with $\ell(\lambda) = m$. Recall that it has a rectangular decomposition 
$\lambda=\widehat{\lambda}_1+ \cdots +\widehat{\lambda}_t$, with associated 
rectangles $R_1,\ldots,R_t$ (see \eqref{eq:rectangledef} and Figure~\ref{fig:rectangular-decomposition}). 
Define a map of abelian groups
\[
L_{\lambda} \colon \Lambda^{(\le m)}_{\lambda} \to \Lambda^{(\le m)}_{\lambda}, \quad
L_{\lambda}(s_\mu) 
= \sum_{i = 1}^t i 
\sum_{ \substack{ \nu \supseteq \mu \\
		\nu \smallsetminus \mu \in R_i	\\
		|\nu| = |\mu| + 1 
}} s_\nu.
\]
For example, if $\lambda = \emptyset$ then $L_\lambda$ is  the zero map.
Consider the following ideal in $\Lambda^{(\le m)}$:
\[
E_\lambda^{(\le m)} := \bigoplus_{ \substack{ \mu \supseteq \lambda \\ \ell(\mu) = m \\ \mu \nsubseteq \lambda \cup R_1 \cup \cdots \cup R_t } } \Z \cdot s_\mu \subseteq  \Lambda^{(\le m)}_{\lambda}.
\]
Since $L_{\lambda}(E_\lambda^{(\le m)}) \subseteq E_\lambda^{(\le m)}$,  $L_{\lambda}$ induces a map of abelian groups
$$\widetilde{L}_{\lambda} \colon \Lambda^{(\le m)}_{\lambda}/E_\lambda^{(\le m)} \to \Lambda^{(\le m)}_{\lambda}/E_\lambda^{(\le m)}.$$ 
The following lemma will be crucial in our proof of Theorem~\ref{thm:main}.

\begin{lemma}\label{lem:commute}
	Let $\lambda$ be a  partition with $\ell(\lambda) = m$. 
	Then $\widetilde{L}_{\lambda}$ is a $\Lambda^{(\le m)}$-module homomorphism. 
\end{lemma}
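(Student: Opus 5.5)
The plan is to reduce to a local combinatorial identity about horizontal strips, using Pieri's rule. Since $\Lambda^{(\le m)}$ is generated as a ring by the images of $h_1, h_2, \dotsc$, and $\widetilde{L}_\lambda$ is additive, it suffices to show that for every $j \ge 1$ and every $\mu \supseteq \lambda$ with $\ell(\mu) = m$,
$$L_\lambda(h_j \star_m s_\mu) \equiv h_j \star_m L_\lambda(s_\mu) \pmod{E_\lambda^{(\le m)}}.$$
Both sides are sums of $s_\rho$ with $\rho \supseteq \mu \supseteq \lambda$. Because $\ell(\lambda) = m$ and $\star_m$ discards partitions with more than $m$ rows, every such $\rho$ has $\ell(\rho) = m$. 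Working modulo $E_\lambda^{(\le m)}$, only those $\rho$ with $\rho \subseteq \lambda \cup R_1 \cup \dotsb \cup R_t$ matter. For such $\rho$ we also have $\mu \subseteq \lambda \cup R_1 \cup \dotsb \cup R_t$. Hence $\rho \smallsetminus \mu$ is contained in $R_1 \cup \dotsb \cup R_t$, and I will compare the coefficients of these $s_\rho$ directly.

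By Pieri's rule, the coefficient of $s_\rho$ on the left is $\sum_b w(b)$. This sum runs over boxes $b \in \rho\smallsetminus\mu$ such that $\rho \smallsetminus b$ is a partition and $(\rho\smallsetminus b)\smallsetminus\mu$ is a horizontal $j$-strip. Here $w(b)=i$ if $b \in R_i$. On the right, the coefficient is $\sum_b w(b)$ over boxes $b$ such that $\mu \cup b$ is a partition and $\rho \smallsetminus (\mu\cup b)$ is a horizontal $j$-strip. I will write each side as $\sum_i i\, N_i$, where $N_i$ (resp. $N_i'$) counts the contributing boxes lying in $R_i$. Then it suffices to prove $N_i = N_i'$ for each $i$.

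The key geometric input is the following. The rectangles $R_1, \dotsc, R_t$ occupy pairwise disjoint sets of rows and pairwise disjoint sets of columns; they meet only diagonally at corners, as in Figure~\ref{fig:rectangular-decomposition}. Consider the conditions "$\mu\cup b$ is a partition", "$\rho\smallsetminus b$ is a partition", and "a skew set is a horizontal strip". Each involves only boxes in the same row or the same column as a given box. Therefore the skew shape $\theta = \rho\smallsetminus\mu$ decomposes as a disjoint union of pieces $\theta_i = \theta \cap R_i$. These pieces do not interact, and the cells of $\lambda$ bordering each $R_i$ are all present in $\mu$. I will check this last point against the definition \eqref{eq:rectangledef}: the cells directly above and directly left of $R_i$ lie in $\lambda$. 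Consequently a horizontal strip in $\theta$ is the same as a choice of horizontal strips in each $\theta_i$, with sizes $j_i$ summing to the total. The count $N_i$ then factors as (number of admissible $b$ in $\theta_i$ together with a horizontal $j_i$-strip filling the rest of $\theta_i$) times (number of ways the other pieces $\theta_{i'}$ are entirely horizontal strips), summed over the compositions of $j$; the same factorization holds for $N_i'$.

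This reduces the problem to a single skew shape $\theta_i$ with $|\theta_i| = j_i + 1$. There I need: the number of removable corners $b$ of $\theta_i$ (outer corners) with $\theta_i \smallsetminus b$ a horizontal strip equals the number of inner corners $b$ with $\theta_i\smallsetminus b$ a horizontal strip. This is exactly the statement that $\langle s_{\theta_i}, h_{j_i} h_1\rangle = \langle s_{\theta_i}, h_1 h_{j_i}\rangle$, which holds because $\Lambda$ is commutative and the skew Pieri expansions compute these pairings. I expect the main obstacle to be the careful verification of the locality/factorization claim, in particular the boundary behaviour at the unbounded rectangle $R_t$ and at the corners where $R_i$ and $R_{i+1}$ touch diagonally. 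I will handle these by checking directly that no row or column condition links boxes in different $R_i$.
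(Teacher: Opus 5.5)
Your proposal is correct, but its key step differs from the paper's. Both arguments reduce to $h=h_j$ and compare coefficients of $s_\rho$ for $\rho\subseteq\lambda\cup R_1\cup\dots\cup R_t$ using Pieri's rule. The paper encodes the weights through the cumulative operators $L_{\lambda,i}$, which amounts to your per-rectangle counts $N_i$.

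The paper then does a direct case analysis. Both counts vanish unless every column of $\rho\smallsetminus\mu$ has length at most $2$, with at most one column of length $2$. If $\rho\smallsetminus\mu$ is a horizontal $(j+1)$-strip, both sides count its row segments lying in $R_i\cup\dots\cup R_t$. If it contains a single vertical domino, the left side detects the lower box and the right side the upper box. These two boxes lie in the same $R_{j'}$ because the domino is connected.

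You instead localize. Two facts drive this:
\begin{itemize}
\item The cells directly above and directly to the left of each $R_i$ lie in $\lambda$: row $m_{i+1}$ has length $\lambda_{m_{i+1}}$, and column $\lambda_{m_i}$ has height at least $m_i$.
\item The column ranges $(\lambda_{m_i},\lambda_{m_{i+1}}]$ are pairwise disjoint.
\end{itemize}
Consequently, the intermediate partitions between $\mu$ and $\rho$, and the horizontal-strip condition, both factor over the pieces $\theta_i$. This gives $N_i=K_{\theta_i,(j_i,1)}\cdot\epsilon$ and $N_i'=K_{\theta_i,(1,j_i)}\cdot\epsilon$. Here $j_i=|\theta_i|-1$, and $\epsilon\in\{0,1\}$ records whether every other piece $\theta_{i'}$ is a horizontal strip. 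The equality $N_i=N_i'$ is then the symmetry of skew Kostka numbers, i.e.\ $h_jh_1=h_1h_j$. The ``sum over compositions'' in your write-up is superfluous, since the sizes of the pieces are forced.

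Your route is more conceptual. It shows that, modulo $E_\lambda^{(\le m)}$, $L_\lambda$ acts on the piece in $R_i$ as $i$ times multiplication by $h_1$, so the commutation is inherited from the commutativity of $\Lambda$. The paper's case count is more elementary and needs only Pieri's rule. Both rely on the same separation property of the rectangles. In your argument that property carries all the weight, so you should write out the factorization lemma explicitly.
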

\begin{proof}
	For $1 \le i \le t$, define a map of abelian groups
	\[
	L_{\lambda,i} \colon \Lambda^{(\le m)}_{\lambda} \to \Lambda^{(\le m)}_{\lambda}, \quad
	L_{\lambda,i}(s_\mu) 
	=
	\sum_{ \substack{ \nu \supseteq \mu \\
			\nu \smallsetminus \mu \in R_i \cup \cdots \cup R_t	\\
			|\nu| = |\mu| + 1 
	}} s_\nu.
	\]
	Then $L_\lambda = \sum_{i = 1}^t L_{\lambda,i}$. It is enough to show that
	$L_{\lambda,i}(g \star_m h) -  L_{\lambda,i}(g) \star_m h$ lies in $E_\lambda^{(\le m)}$ for any $1 \le i \le t$, $g \in \Lambda^{(\le m)}_{\lambda}$, and $h \in \Lambda^{(\le m)}$. 
	By linearity, we may assume that $g = s_\mu$ for some $\lambda \subseteq \mu$ with $\ell(\mu) = m$. Recall the surjective ring map $\pi_m \colon \Lambda \to \Lambda^{(\le m)}$. 
	Since $\Lambda$ is generated as a ring by $\{ h_j : j \in \Z_{\ge 0} \}$, $\Lambda^{(\le m)}$ is also generated by $\{ h_j : j \in \Z_{\ge 0} \}$. Since the lemma holds for $h = s_0 = s_\emptyset = 1$, we reduce to the case when 
	$h = h_j$ for some $j \in \Z_{> 0}$. 
	That is, we need to show that
	$$L_{\lambda,i}(s_\mu \star_m h_j) -  L_{\lambda,i}(s_\mu) \star_m h_j \in E_\lambda^{(\le m)}.$$
	On the one hand, 
	\[
	L_{\lambda,i}(s_\mu \star_m h_j)
	= \sum_{\ell(\nu) = m } c_{\mu,(j)}^{\nu} L_{\lambda,i}(s_\nu) 
	= \sum_{ \substack{\nu' \supseteq \mu \\ \ell(\nu') = m }}   \, \, 
	\sum_{ \substack{ \mu \subseteq \nu \subseteq \nu' \\
			\nu' \smallsetminus \nu \in R_i \cup \cdots \cup R_t
			\\
			|\nu'| = |\nu| + 1 
	}}
	c_{\mu,(j)}^{\nu}  s_{\nu'}.
	\]
	On the other hand,
	\[
	L_{\lambda,i}(s_\mu) \star_{m} h_j = \sum_{ \substack{ \nu \supseteq \mu \\
			\nu \smallsetminus \mu \in R_i \cup \cdots \cup R_t
			\\
			|\nu| = |\mu| + 1 
	}}
	s_\nu \star_{m}  s_{j} 
	=  \sum_{ \substack{\nu' \supseteq \mu \\ \ell(\nu') = m }}  \, \, 
	\sum_{ \substack{ \mu \subseteq \nu \subseteq \nu' \\
			\nu \smallsetminus \mu \in R_i \cup \cdots \cup R_t	\\
			|\nu| = |\mu| + 1 
	}}
	c_{\nu,(j)}^{\nu'}  s_{\nu'}.
	\]
	Consider a partition $\nu'$ with $\nu' \supseteq \mu$ and $\ell(\nu') = m$. 
	By definition, $s_{\nu'} \in E_\lambda^{(\le m)}$ unless  
	$\nu' \subseteq \lambda \cup R_1 \cup \cdots \cup R_t$. 
	It is therefore enough to show that if $\nu' \subseteq \lambda \cup R_1 \cup \cdots \cup R_t$, then 
	\begin{equation*}
		\sum_{ \substack{ \mu \subseteq \nu \subseteq \nu' \\
				\nu' \smallsetminus \nu \in R_i \cup \cdots \cup R_t \\
				|\nu'| = |\nu| + 1 
		}}
		c_{\mu,(j)}^{\nu} = \sum_{ \substack{ \mu \subseteq \nu \subseteq \nu' \\
				\nu \smallsetminus \mu \in R_i \cup \cdots \cup R_t
				\\
				|\nu| = |\mu| + 1 
		}}
		c_{\nu,(j)}^{\nu'}
	\end{equation*}
	Equivalently, by Pieri's rule,
	\begin{align*}
		&|\{ \mu \subseteq \nu \subseteq \nu' : \nu' \smallsetminus \nu \in R_i \cup \cdots \cup R_t, \, |\nu'| = |\nu| + 1, \, \nu \smallsetminus \mu \textrm{ is a horizontal } j\textrm{-strip}  \}| = 
		\\
		&|\{ \mu \subseteq \nu \subseteq \nu' :  \nu \smallsetminus \mu \in R_i \cup \cdots \cup R_t, \, |\nu| = |\mu| + 1, \, \nu' \smallsetminus \nu \textrm{ is a horizontal } j\textrm{-strip}  \}|.
	\end{align*}
	Note that both sides are zero unless the skew Young diagram $\nu' \smallsetminus \mu$ contains at most one vertical column of size at most $2$. 
		Recall that the connected components of a skew Young diagram are the maximal subsets of boxes that are connected via edge adjacency. Then $R_1,\ldots,R_t$ are the connected components of $R_1 \cup \cdots \cup R_t$.
	If $\nu' \smallsetminus \mu$ is a horizontal $(j + 1)$-strip, then both sides equal the number of connected components of $\nu' \smallsetminus \mu$ in $R_i \cup \cdots \cup R_t$. Otherwise, we may assume that $\nu' \smallsetminus \mu$ contains a unique vertical column $C$ of length $2$. Let $\alpha$ be the upper box of $C$, and let $\beta$ be the lower box of $C$. Then the left-hand side of the above equation is equal to $1$ if $\beta \in R_i \cup \cdots \cup R_t$ and is $0$ otherwise, while the right-hand side is equal to $1$ if $\alpha \in R_i \cup \cdots \cup R_t$ and is $0$ otherwise.  
	Since $C \subseteq \nu' \smallsetminus \mu \subseteq R_1 \cup \cdots \cup R_t$ is connected, we must have $C \subseteq R_{j'}$ for some $j'$, and the result follows.
\end{proof}

\section{Projective bundle rings}\label{sec:projbund}

In this section, we develop some properties of (multi)projective bundle rings. Let $A = A^0 \oplus \dotsb \oplus A^n$ be a commutative graded $\mathbb{R}$-algebra with an $\R$-linear map $\deg_A \colon A^n \to \mathbb{R}$ that satisfies Poincar\'{e} duality. It is convenient to allow $A$ to be the zero ring (even with $n > 0$); the zero ring vacuously satisfies Poincar\'{e} duality. The first few results that we prove will not require $A$ to satisfy the K\"{a}hler package. 

Choose some $r$ and classes $c_1, \dotsc, c_r$, with $c_i \in A^i$. Set $c_0 = 1$, and set $c_i = 0$ for $i > r$ or $i < 0$. Set $B = A[\zeta]/(\zeta^r - c_1 \zeta^{r-1} + \dotsb + (-1)^r c_r)$, and let $\pi^* \colon A \to B$ be the obvious inclusion. We quickly recall some standard properties of $B$; see \cite[Section 3.1]{LarsonPartida} or \cite[Section 3]{IMMSW}. It follows from the relation defining $B$ that there is a decomposition 
$$B \cong A \oplus \zeta A \oplus \dotsb \oplus \zeta^{r-1} A$$
as $A$-modules, giving \eqref{eq:Bdirectsum}. In particular, $B^{n + r - 1}$ is naturally isomorphic to $A^n$. Define a map $\deg_B \colon B^{n + r -1} \to \mathbb{R}$ via the formula $\deg_B(\zeta^{r-1} x) = \deg_A(x)$ for all $x \in A^n$. Extend $\deg_A$ and $\deg_B$ to be $0$ on the other graded pieces. 
There is an $A$-module homomorphism $\pi_* \colon B \to A$ which, in terms of the decomposition \eqref{eq:Bdirectsum}, maps $\zeta^{r-1} A$ to $A$ via the identity and sends the other summands to $0$. We call this the \emph{pushforward} map. 
The pushforward map is the adjoint of $\pi^*$: for any $x \in A$ and $y \in B$, we have
$$\deg_B(\pi^* (x) y) = \deg_A(x \pi_*(y)).$$
By \eqref{eq:Bdirectsum}, an element $y \in B$ is zero if and only if $\pi_*(y\zeta^i) = 0$ for all nonnegative integers $i$. 

We will need analogues of Segre classes. Define classes $s_i \in A^i$ for $i \in \{0, \dotsc, n\}$ via the relation
\begin{equation}\label{eq:segre}
(1 - c_1 + c_2 - \dotsb + (-1)^r c_r)(s_0 + s_1 + \dotsb + s_n) = 1 \text{ in }A.
\end{equation}
For example, $s_0 = 1$, $s_1 = c_1$, and $s_2 = c_1^2 - c_2$.  Set $s_i = 0$ for $i > n$ or $i < 0$.
With the notation of Section~\ref{sec:symmetric}, we have a graded ring homomorphism 
 $$\Psi \colon \Lambda = \Z[e_1,e_2,\ldots] \to A, \quad \Psi(e_i) = c_i.$$
 For example, $\Psi(h_i) = \Psi(s_{(i)}) = s_i$ is the $i$th Segre class. 
In general, for a partition $\lambda$,  analogously to \eqref{eq:schurdef}, 
 $$\Psi(s_\lambda) = \det(s_{\lambda_i + j -i})_{i,j=1}^{\ell(\lambda)}.$$  
 If $\lambda'$ is the transpose of $\lambda$, then we have the alternative expression $\Psi(s_\lambda) = \det(c_{\lambda_i' + j -i})_{i,j=1}^{\ell(\lambda')}$. It follows that $\Psi(s_{\lambda}) = 0$ if $\ell(\lambda) > r$.  
 For a nonnegative integer $m$, $\Psi$ restricts to a map $\Lambda^{(\le m)} \to A$ of abelian groups. Importantly, if $m < r$, then this is \emph{not} a ring homomorphism when we consider $\Lambda^{(\le m)}$ as a ring with the product $\star_m$ that we discussed in Section~\ref{sec:symmetric}.
 
In what follows,  we will often implicitly view elements of $\Lambda$ as elements of $A$ via $\Psi$. In particular, we will often
write $s_\lambda$ for $\Psi(s_\lambda) \in A^{|\lambda|}$, and, 
if $\ell(\lambda), \ell(\mu) \le m$, then we often write $s_\lambda \star_m s_\mu$ for $\Psi(s_\lambda \star_m s_\mu) \in A$.

\begin{lemma}\label{lem:powers}
For $k \ge 1$, we have that
$$\zeta^{r - 1 + k} = \sum_{i=0}^{r-1} (-1)^{i} s_{(k, 1^{i})} \zeta^{r - 1 - i}.$$
\end{lemma}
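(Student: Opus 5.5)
Our approach is induction on $k \ge 1$, reducing powers of $\zeta$ by the defining relation $\zeta^r = c_1\zeta^{r-1} - c_2\zeta^{r-2} + \dotsb - (-1)^r c_r$ in $B$. Equivalently, $\zeta^r = \sum_{j=1}^{r} (-1)^{j-1} c_j \zeta^{r-j}$. We first record the identity we expect to need. The Segre classes defined in \eqref{eq:segre} satisfy $\sum_{j=0}^{r} (-1)^j c_j s_{p-j} = 0$ for $p \ge 1$. Equivalently, under $\Psi$, the hook Schur functions satisfy the standard relation coming from $\sum_j (-1)^j e_j h_{p-j} = 0$, refined to hooks:
$$ s_{(k,1^i)} = h_k e_i - s_{(k+1,1^{i-1})}, $$
valid in $\Lambda$ by Pieri's rule. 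Here $h_k e_i$ is the sum of the two hooks $(k+1,1^{i-1})$ and $(k,1^i)$, with the convention $s_{(k+1,1^{-1})} = 0$. Applying $\Psi$ gives the same relation in $A$, with $e_i \mapsto c_i$ and $h_k \mapsto s_k$. In particular, $s_{(k,1^i)} = 0$ whenever $i \ge r$, because then $\ell > r$.

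For the base case $k=1$, the relation gives $\zeta^r = \sum_{i=0}^{r-1} (-1)^i c_{i+1}\zeta^{r-1-i}$. We have $s_{(1,1^i)} = s_{(1^{i+1})} = e_{i+1} \mapsto c_{i+1}$, so the claim holds.

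For the inductive step, we multiply the formula for $k$ by $\zeta$:
$$ \zeta^{r+k} = \sum_{i=0}^{r-1} (-1)^i s_{(k,1^i)} \zeta^{r-i}. $$
The term $i=0$ contains $\zeta^r$, which we expand using the base case. It contributes $s_k \sum_{i=0}^{r-1} (-1)^i c_{i+1}\zeta^{r-1-i}$. In the remaining terms we reindex by $i \mapsto i+1$, which gives $\sum_{i=0}^{r-2} (-1)^{i+1} s_{(k,1^{i+1})}\zeta^{r-1-i}$. The coefficient of $\zeta^{r-1-i}$ is therefore
$$ (-1)^i\bigl(s_k c_{i+1} - s_{(k,1^{i+1})}\bigr), $$
where the second term is absent when $i = r-1$. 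By the hook Pieri relation, $h_k e_{i+1} - s_{(k,1^{i+1})} = s_{(k+1,1^i)}$. For $i = r-1$ the missing term is $s_{(k,1^r)}$, which vanishes in $A$ since its length exceeds $r$. So the coefficient is $(-1)^i s_{(k+1,1^i)}$, which closes the induction.

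The main point requiring care is the hook identity $h_k e_i = s_{(k+1,1^{i-1})} + s_{(k,1^i)}$ and the vanishing $\Psi(s_\lambda) = 0$ for $\ell(\lambda) > r$. The identity follows directly from Pieri's rule, applied dually by adding a vertical strip to a row, since $e_i s_{(k)}$ consists of exactly these two hooks. The vanishing is already noted in the paper.
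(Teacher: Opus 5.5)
Your proof is correct and follows the same route as the paper. It runs by induction on $k$, with the base case being the defining relation of $B$, and the inductive step multiplies by $\zeta$, substitutes for $\zeta^r$, and applies the Pieri identity $s_k c_i = s_{(k,1^i)} + s_{(k+1,1^{i-1})}$. You also spell out that $s_{(k,1^r)}=0$ because its length exceeds $r$, which the $i=r-1$ coefficient needs and which the paper leaves implicit in its ``expanding implies the result.''
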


\begin{proof}
We prove this by induction on $k$. As $s_{(1^i)} = c_i$, the case when $k=1$ says that
$$\zeta^r = c_1 \zeta^{r-1} - c_2 \zeta^{r-2} + \dotsb + (-1)^{r - 1} c_r,$$
which is exactly the relation defining $B$. For the induction step, we have
$$\zeta \cdot \zeta^{r - 1 + k} = s_{(k)} \zeta^r + \sum_{i=1}^{r-1} (-1)^{i} s_{(k, 1^{i})} \zeta^{r - i} = s_{k} \cdot (c_1 \zeta^{r-1} - c_2 \zeta^{r-2} + \dotsb + (-1)^{r - 1} c_r) + \sum_{i=1}^{r-1} (-1)^{i} s_{(k, 1^{i})} \zeta^{r - i}.$$
By Pieri's rule, $s_k \cdot c_i = s_{(k, 1^i)} + s_{(k + 1, 1^{i - 1})}$. Expanding implies the result. 
\end{proof}

In particular, we have the following formula for the pushforward map: for all $i \in \Z$,

\begin{equation}\label{eq:pushforward}
\pi_*(\zeta^{r - 1 + i}) = s_i.
\end{equation}

Finally, we observe that projective bundle rings always satisfy Poincar\'{e} duality if the base ring does; see \cite[Proposition 3.1]{LarsonPartida} or \cite[Lemma 3.2]{IMMSW}. 

\begin{proposition}\label{prop:PDprojbun}
The ring $B$ satisfies Poincar\'{e} duality. 
\end{proposition}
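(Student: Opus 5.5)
Writing $B = \bigoplus_{i=0}^{r-1}\zeta^i A$ as in \eqref{eq:Bdirectsum}, I will show directly that the pairing $B^j \times B^{n+r-1-j} \to \mathbb{R}$, $(x,y)\mapsto \deg_B(xy)$, is nondegenerate. By the adjunction $\deg_B(\pi^*(x)y) = \deg_A(x\pi_*(y))$, the pairing of $\zeta^i a$ with $\zeta^{i'} b$ (for $a,b\in A$) equals $\deg_A(ab\,\pi_*(\zeta^{i+i'}))$. By \eqref{eq:pushforward}, $\pi_*(\zeta^{i+i'}) = s_{i+i'-r+1}$, which vanishes if $i+i' < r-1$ and equals $1$ if $i+i' = r-1$.

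First I filter $B^j$ by $F_p = \bigoplus_{i\le p}\zeta^i A^{j-i}$, and filter $B^{n+r-1-j}$ by the complementary powers of $\zeta$. Relative to the decompositions $\bigoplus_i \zeta^i A^{j-i}$ and $\bigoplus_{i'} \zeta^{i'} A^{n+r-1-j-i'}$, the matrix of the pairing is block triangular when the blocks are ordered by $i$ and $r-1-i'$. The block $(i, r-1-i)$ is the pairing $A^{j-i}\times A^{n-j+i}\to\mathbb{R}$, $(a,b)\mapsto\deg_A(ab)$, which is nondegenerate by Poincar\'e duality for $A$. All blocks with $i+i' < r-1$ vanish.

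Concretely, suppose $x = \sum_i \zeta^i a_i \neq 0$, and let $p$ be the largest index with $a_p \neq 0$. Choose $b \in A^{n-j+p}$ with $\deg_A(a_p b) \neq 0$, and set $y = \zeta^{r-1-p} b$. Then
$$\deg_B(xy) = \sum_{i\le p} \deg_A\big(a_i b\, s_{i-p}\big) = \deg_A(a_p b) \neq 0,$$
since $s_{i-p} = 0$ for $i < p$ and $s_0 = 1$. So the left kernel is zero.

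Since $B$ is finite-dimensional, I also need the two graded pieces to have equal dimension. Both dimensions equal $\sum_{i=0}^{r-1} \dim A^{j-i}$: for the second piece this uses $\dim A^{n+r-1-j-i'} = \dim A^{j-(r-1-i')}$, which is Poincar\'e duality for $A$. Hence injectivity on the left gives nondegeneracy.

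The only real care point is the triangularity, which rests on $s_m = 0$ for $m<0$ and $s_0=1$ in \eqref{eq:pushforward}. The degenerate cases, such as $A$ being the zero ring or indices outside the range $[0,n]$, are trivially consistent with the argument.
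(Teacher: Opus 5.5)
Your proof is correct and is essentially the paper's argument: the paper also observes that, in a basis adapted to the decomposition \eqref{eq:Bdirectsum}, the Poincar\'e pairing matrix on $B$ is block triangular with diagonal blocks given by the Poincar\'e pairing on $A$. You make the triangularity explicit via the adjunction and \eqref{eq:pushforward} ($s_{<0}=0$, $s_0=1$) and replace the determinant argument with a leading-term injectivity argument plus a dimension count.
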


\begin{proof}
Choosing a basis for each $A^i$ and using the decomposition \eqref{eq:Bdirectsum}, the matrix representing the Poincar\'{e} pairing is block triangular, and the blocks are nondegenerate by Poincar\'{e} duality for $A$. 
\end{proof}

\subsection{Multiprojective bundle rings}
We now discuss a generalization of projective bundle rings. For the rest of this section, we fix some  $m \ge 1$ and set 
$$C = \frac{A[\zeta_1, \dotsc, \zeta_m]}{(\zeta_1^{r} - c_1 \zeta_1^{r-1} + \dotsb + (-1)^r c_r, \dotsc, \zeta_m^r - c_1 \zeta_{m}^{r-1} + \dotsb + (-1)^r c_r)}.$$
In particular, $C$ is an iterated projective bundle ring over $A$. Let $p^* \colon A \to C$ be the natural map. We have a decomposition
\begin{equation*}
C = \bigoplus_{0 \le i_1, \dotsc, i_m \le r-1} \zeta_1^{i_1} \dotsb \zeta_m^{i_m} A.
\end{equation*}

From the structure of $C$ as an iterated projective bundle ring, we obtain a map $\deg_C \colon C^{n + m(r-1)} \to \mathbb{R}$. 
Repeatedly applying Proposition~\ref{prop:PDprojbun}, we see that $C$ satisfies Poincar\'{e} duality. Let $p_* \colon C \to A$ be the pushforward map, obtained by pushing forward along the iterated projective bundle ring structure (in any order). As before, $p_*$ is the adjoint to $p^*$ under the pairings on $A$ and $C$ induced by Poincar\'{e} duality. Applying \eqref{eq:pushforward}, we have
\begin{equation}\label{eq:pushforwardmulti}
p_*(\zeta_1^{r - 1 + i_1} \dotsb \zeta_m^{r - 1 + i_m}) = s_{i_1} \dotsb s_{i_m}.	
\end{equation}
An element $y \in C$ is zero if and only if $p_*(y\zeta_1^{i_1}\cdots \zeta_m^{i_m}) = 0$ for all $(i_1,\ldots,i_m) \in \N^m$. 

There is an action of the symmetric group $S_m$ on $C$ by permuting $\zeta_1, \dotsc, \zeta_m$. The map $p_*$ is $S_m$-equivariant with respect to the trivial action of $S_m$ on $A$. 
Recall that
$$\Delta = \prod_{1 \le i < j \le m} (\zeta_i - \zeta_j) \in C^{\binom{m}{2}}.$$
Let $C^{S_m}$ denote the subring of $S_m$-invariant elements of $C$. 
For a permutation $\sigma \in S_m$, we use $\sgn(\sigma) \in \{ \pm 1 \}$ to denote its sign. 
Say that an element $y \in C$ is \emph{antisymmetric} if $\sigma \cdot y = \sgn(\sigma) y$ for all $\sigma \in S_m$. Then the antisymmetric elements of $C$ are precisely $C^{S_m} \cdot \Delta$. 

\begin{lemma}\label{lem:antisymmetriczero}
	Consider an element $y \in C^{S_m} \cdot \Delta$. Then $y = 0$ in $C$ if and only if $p_*(yz \Delta) = 0$ in $A$ for all $z \in C^{S_m}$. 
\end{lemma}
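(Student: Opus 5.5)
One direction is immediate: if $y = 0$ then $p_*(yz\Delta) = 0$ for every $z$. For the converse, the plan is to combine the criterion already stated for vanishing in $C$ (an element $y$ is zero if and only if $p_*(y\zeta_1^{i_1}\cdots\zeta_m^{i_m}) = 0$ for all exponents) with the $S_m$-equivariance of $p_*$. The key point is that antisymmetric elements pair trivially against anything that is not in the antisymmetric isotypic part.

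Suppose $p_*(yz\Delta) = 0$ for all $z \in C^{S_m}$. By the criterion, it suffices to show $p_*(yw) = 0$ for every $w \in C$.

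First, we reduce to antisymmetric $w$. Since $p_*$ is equivariant for the trivial action on $A$, we have $p_*(yw) = p_*(\sigma(yw)) = \sgn(\sigma)\, p_*(y\,\sigma w)$ for every $\sigma \in S_m$. Averaging over $S_m$ (we work over $\mathbb{R}$, so division by $m!$ is allowed) gives
$$p_*(yw) = p_*\Bigl(y \cdot \tfrac{1}{m!}\textstyle\sum_{\sigma} \sgn(\sigma)\, \sigma w\Bigr).$$
The element $w' = \frac{1}{m!}\sum_\sigma \sgn(\sigma)\,\sigma w$ is antisymmetric.

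Second, we apply the fact stated just before the lemma: the antisymmetric elements of $C$ are exactly $C^{S_m}\cdot\Delta$. So $w' = z\Delta$ for some $z \in C^{S_m}$. Hence $p_*(yw) = p_*(yz\Delta) = 0$ by hypothesis, and therefore $y = 0$.

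Note that antisymmetry of $y$ itself is never used; it only explains why the test elements can be taken of the form $z\Delta$. The only possibly nontrivial input is the identification of the antisymmetric elements with $C^{S_m}\cdot\Delta$. The paper asserts this, but if it needed justification I would argue as follows. Use the decomposition $C \cong A \otimes_{\mathbb{R}} \bigl(\mathbb{R}[\zeta]/(f)\bigr)^{\otimes m}$, which is compatible with the $S_m$-action permuting the tensor factors. Then, over a splitting of $f$ or by a generic deformation argument, show that every antisymmetric element is divisible by $\Delta$ with symmetric quotient. This is the step I expect to be the main obstacle, but since it is stated earlier in the paper, I will simply invoke it.
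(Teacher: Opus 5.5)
Your argument is correct and is essentially the paper's proof: use the $S_m$-equivariance of $p_*$ to replace the test element by its antisymmetrization, then invoke that the antisymmetric elements are exactly $C^{S_m}\cdot\Delta$. One correction is needed. Your remark that the antisymmetry of $y$ is never used is false: it is exactly what gives $p_*(\sigma(yw)) = \sgn(\sigma)\,p_*(y\,\sigma w)$. The lemma genuinely fails without it. For $m \ge 2$, the element $y = 1$ satisfies the hypothesis, because $p_*$ kills every antisymmetric element.
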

That is, an antisymmetric class in $C$ is nonzero if and only if there is another antisymmetric class such that the pushforward of their product is nonzero. 
\begin{proof}[Proof of Lemma~\ref{lem:antisymmetriczero}]
	Assume that $p_*(yz  \Delta) = 0$ in $A$ for all $z \in C^{S_m}$. We have seen that $y = 0$ if and only if $p_*(xy) = 0$ for all $x \in C$. Since $p_*$ is $S_m$-equivariant and $y$ is antisymmetric, we have
	\begin{align*}
		p_*(xy) = \frac{1}{m!}\sum_{\sigma \in S_m} p_*(\sigma \cdot (xy)) 
		= \frac{1}{m!}\sum_{\sigma \in S_m} \sgn(\sigma)p_*((\sigma \cdot x)y) 
		= p_*(x'y), 
	\end{align*}
	where $x' = \frac{1}{m!}\sum_{\sigma \in S_m} \sgn(\sigma) \sigma \cdot x$. Since $x'$ is antisymmetric, we have $p_*(x'y) = 0$ by assumption. We conclude that $y = 0$. The converse is clear. 
\end{proof}

For a partition $\lambda$, 
recall that the Schur polynomial $s_\lambda(x_1,\ldots,x_m) \in \Z[x_1,\ldots,x_m]^{S_m}$ is a symmetric polynomial that is homogeneous of degree $|\lambda|$. 
Let $s_{\lambda}(\zeta_1, \dotsc, \zeta_m) \in C^{S_m}$ be the Schur polynomial of $\lambda$ evaluated at $\zeta_1, \dotsc, \zeta_m$. 
Equivalently, $s_{\lambda}(\zeta_1, \dotsc, \zeta_m)$ is the Schur class associated to $\lambda$ for the projective bundle ring 
$C[\zeta]/((\zeta - \zeta_1)\cdots(\zeta - \zeta_m))$. 
We now prove the key formula for Schur classes in terms of multiprojective bundle rings that was stated in the introduction. 
Recall that Proposition~\ref{prop:pushforward} states that if 
$\lambda$ is a partition, and $\ell(\lambda) \le m \le r$, then
$$(-1)^{\binom{m}{2}} m! s_{\lambda} = p_* \left(\Delta^2 (\zeta_1 \dotsb \zeta_m)^{r-m} s_{\lambda}(\zeta_1, \dotsc, \zeta_m)\right ).$$

\begin{proof}[Proof of Proposition~\ref{prop:pushforward}]
We claim that 
\begin{equation}\label{eq:monomialpush}
p_*(\Delta s_{\lambda}(\zeta_1, \dotsc, \zeta_m) (\zeta_1 \dotsb \zeta_m)^{r-m} \zeta_2 \zeta_3^2 \dotsb \zeta_m^{m-1}) = s_{\lambda}.
\end{equation}
Assume this claim. 
For any $\sigma \in S_m$, since $p_*$ is $S_m$-equivariant and $\Delta$ is antisymmetric,  we have
\begin{align*}
s_\lambda &=	p_*(\sigma \cdot (\Delta s_{\lambda}(\zeta_1, \dotsc, \zeta_m) (\zeta_1 \dotsb \zeta_m)^{r-m} \zeta_2 \zeta_3^2 \dotsb \zeta_m^{m-1})) 
\\
	&= \sgn(\sigma) p_*(\Delta s_{\lambda}(\zeta_1, \dotsc, \zeta_m) (\zeta_1 \dotsb \zeta_m)^{r-m} \zeta_{\sigma(1)}^0 \zeta_{\sigma(2)}^1 \zeta_{\sigma(3)}^2 \dotsb \zeta_{\sigma(m)}^{m-1}) 
\end{align*}
Summing the above equation over all $\sigma$ in $S_m$ gives
\[
m! s_\lambda = p_*(\Delta s_{\lambda}(\zeta_1, \dotsc, \zeta_m) (\zeta_1 \dotsb \zeta_m)^{r-m}  \det(\zeta_i^{j - 1})_{i, j=1}^m). 
\]	
Observe that $\Delta = \det(\zeta_i^{m - j})_{i, j=1}^m = (-1)^{\binom{m}{2}}\det(\zeta_i^{j - 1})_{i, j=1}^m$. Substituting this expression into the above equation gives the result. It remains to prove the claim \eqref{eq:monomialpush}. 

By \cite[Theorem 7.15.1]{stanley-ec2}, we see that
$$\Delta s_{\lambda}(\zeta_1, \dotsc, \zeta_m) = \det(\zeta_i^{\lambda_j + m - j})_{i, j=1}^m.$$
Expanding the determinant and pushing forward using \eqref{eq:pushforwardmulti}, we see that
\begin{align*}
	p_*(\Delta s_{\lambda}(\zeta_1, \dotsc, \zeta_m) (\zeta_1 \dotsb \zeta_m)^{r-m} \zeta_2 \zeta_3^2 \dotsb \zeta_m^{m-1}) &= p_*\left(\sum_{\sigma \in S_m} \sgn(\sigma) \prod_{j=1}^{m} \zeta_{\sigma(j)}^{\lambda_{j} + m - j + (r - m) + (\sigma(j) - 1)}\right)  \\
	&= \sum_{\sigma \in S_m} \sgn(\sigma) p_*\left(\prod_{j=1}^{m} \zeta_{\sigma(j)}^{r - 1 + \lambda_{j}  - j  + \sigma(j)}\right)  \\
	&= \sum_{\sigma \in S_m} \sgn(\sigma) \prod_{j=1}^{m} s_{\lambda_{j}  - j  + \sigma(j)}  \\
	&= \det(s_{\lambda_{j} - j + i})_{i, j=1}^m
	\\
	&= \det(s_{\lambda_{i} - i + j})_{i, j=1}^m
	\\
	&= s_{\lambda}. \qedhere
\end{align*}
\end{proof}
\begin{remark}
There are a number of formulas describing Schur classes as pushforwards; see, e.g., \cite[Chapter 14]{FultonIntersection} and \cite{DarondeauPragacz}. As far as the authors are aware, neither Proposition~\ref{prop:pushforward} nor \eqref{eq:monomialpush} has appeared in the literature before. It would be interesting to connect these formulas with existing formulas. 
\end{remark}

We will also need the following lemma. 

\begin{lemma}\label{lem:detectzeroupstairs}
	Fix $a \in A$ and a partition $\mu$ with $\ell(\mu) \le m \le r$. Then
	$a \Delta (\zeta_1 \cdots \zeta_m)^{r - m} s_\mu(\zeta_1,\ldots,\zeta_m) = 0$ in $C$ if and only if 
	$a (s_\mu \star_m s_\nu) = 0$ in $A$ for all partitions $\nu$  with $\ell(\nu) \le m$. 
\end{lemma}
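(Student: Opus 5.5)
Set $y = a \Delta (\zeta_1 \cdots \zeta_m)^{r - m} s_\mu(\zeta_1,\ldots,\zeta_m)$. Our plan is to test $y$ against antisymmetric classes, push forward with Proposition~\ref{prop:pushforward}, and then use Poincar\'{e} duality on $A$. Since $a \in A$ and $(\zeta_1\cdots\zeta_m)^{r-m}s_\mu(\zeta_1,\ldots,\zeta_m)$ is symmetric, $y$ lies in $C^{S_m}\cdot\Delta$. Lemma~\ref{lem:antisymmetriczero} therefore says that $y = 0$ if and only if $p_*(y z \Delta) = 0$ for all $z \in C^{S_m}$.

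The first step is to describe $C^{S_m}$ well enough. I will use that $C^{S_m}$ is spanned over $A$ (that is, over $p^*A$) by the Schur polynomials $s_\nu(\zeta_1,\ldots,\zeta_m)$ with $\ell(\nu)\le m$. To see this, start from the monomial basis $\zeta_1^{i_1}\cdots\zeta_m^{i_m}A$ of $C$. Since $\mathbb{R}$-coefficients make averaging available, symmetrizing sends this basis onto a spanning set of $C^{S_m}$ consisting of images of monomial symmetric polynomials. Those are $\mathbb{Z}$-combinations of Schur polynomials in $m$ variables. By $A$-linearity of $p_*$, the condition becomes
\[
p_*\bigl(a\,\Delta^2 (\zeta_1\cdots\zeta_m)^{r-m} s_\mu(\zeta)s_\nu(\zeta)\,b\bigr) = 0 \quad \text{for all } b \in A \text{ and all } \nu \text{ with } \ell(\nu)\le m.
\]

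The second step is to expand $s_\mu(\zeta)s_\nu(\zeta) = \sum_{\ell(\kappa)\le m} c^\kappa_{\mu,\nu}s_\kappa(\zeta)$; this is exactly the product $\star_m$, because the ring of symmetric polynomials in $m$ variables is $(\Lambda^{(\le m)},\star_m)$. Applying Proposition~\ref{prop:pushforward} to each $\kappa$ (allowed since $\ell(\kappa)\le m\le r$) and using $A$-linearity gives
\[
p_*\bigl(a b\, \Delta^2 (\zeta_1\cdots\zeta_m)^{r-m} s_\mu(\zeta)s_\nu(\zeta)\bigr) = (-1)^{\binom m2} m!\, a b\, \Psi(s_\mu\star_m s_\nu).
\]

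So $y=0$ if and only if $ab\,(s_\mu\star_m s_\nu)=0$ for all $b\in A$ and all $\nu$ with $\ell(\nu)\le m$. Taking $b=1$ shows this is equivalent to $a(s_\mu\star_m s_\nu)=0$ for all such $\nu$, which is the statement. For the converse direction no duality argument is needed: if $a(s_\mu\star_m s_\nu)=0$ for all $\nu$, then multiplying by any $b$ keeps it zero.

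The step that needs the most care is the spanning claim for $C^{S_m}$. $C$ is not a polynomial ring, but the symmetrization argument only uses the $A$-basis of monomials and the $S_m$-equivariance of that decomposition up to rewriting, and the relations defining $C$ are symmetric. Since $C$ is the quotient of $A[\zeta_1,\dots,\zeta_m]$ by an $S_m$-stable ideal and averaging is exact over $\mathbb{R}$, $C^{S_m}$ is the image of $A[\zeta_1,\ldots,\zeta_m]^{S_m} = A\otimes \mathbb{R}[\zeta]^{S_m}$. That is spanned over $A$ by the Schur polynomials in $m$ variables, which settles the claim.
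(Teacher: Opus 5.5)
Your proof is correct and follows the paper's argument: view $y$ as an antisymmetric class, apply Lemma~\ref{lem:antisymmetriczero}, reduce to testing against $s_\nu(\zeta_1,\ldots,\zeta_m)$ with $\ell(\nu)\le m$, and evaluate the pushforward via the $\star_m$ expansion and Proposition~\ref{prop:pushforward}. You also justify a step the paper leaves implicit, namely that $C^{S_m}$ is spanned over $A$ by these Schur polynomials (via averaging over $S_m$), and you rightly observe that the announced Poincar\'e duality on $A$ is never actually needed.
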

\begin{proof}
	Let $y = a \Delta (\zeta_1 \cdots \zeta_m)^{r - m} s_\mu(\zeta_1,\ldots,\zeta_m) \in C$. Then $y$ is antisymmetric, so by Lemma~\ref{lem:antisymmetriczero}, $y = 0$ if and only if
	$p_*(y  z \Delta) = 0$ in  $A$ for all $z \in C^{S_m}$. This holds if and only if $p_*(y  s_\nu(\zeta_1,\ldots,\zeta_m) \Delta) = 0$ in $A$ for all partitions $\nu$ with $\ell(\nu) \le m$. Substituting in the definition of $y$ and then using Proposition~\ref{prop:pushforward}, we compute
	\begin{align*}
		p_*(y  s_\nu(\zeta_1,\ldots,\zeta_m) \Delta) &= a p_*(\Delta^2 (\zeta_1 \cdots \zeta_m)^{r - m} s_\mu(\zeta_1,\ldots,\zeta_m) s_\nu(\zeta_1,\ldots,\zeta_m)) \\
		&= a \sum_{\ell(\nu') \le m } c_{\mu,\nu}^{\nu'} 
		p_*(\Delta^2 (\zeta_1 \cdots \zeta_m)^{r - m}  s_{\nu'}(\zeta_1,\ldots,\zeta_m)) 
		\\
		&= ((-1)^{\binom{m}{2}} m!) a \sum_{\ell(\nu') \le m } c_{\mu,\nu}^{\nu'} s_{\nu'}
		\\
		&= ((-1)^{\binom{m}{2}} m!) a (s_\mu \star_m s_\nu).
		\\
	\end{align*}
The result follows.
\end{proof}

\subsection{Positivity of multiprojective bundle rings}
For the remainder of this section, we assume that $A$ has the K\"{a}hler package with respect to a nonempty open convex cone $\mathcal{K}_A \subseteq A^1$. Recall that $C$ is the $m$-fold multiprojective bundle ring and that 
 $\mathcal{K}_C = p^* (\mathcal{K}_A) + \mathbb{R}_{>0} \zeta_1 + \dotsb + \mathbb{R}_{>0} \zeta_m$.

We first recall a general result about the K\"{a}hler package.
Let $D = D^0 \oplus \dotsb \oplus D^s$ be a commutative graded $\mathbb{R}$-algebra with an $\R$-linear map $\deg_D: D^s \to \R$. Let $U$ be the subset of $D^1$ consisting of elements $h$ such that, for all $i < s/2$, multiplication by $h^{s - 2i}$ is an isomorphism from $D^{i}$ to $D^{s - i}$, i.e., the locus where the hard Lefschetz property holds. Then $U$ is a (possibly empty) open cone. An element $h \in U$ satisfies the Hodge--Riemann relations if and only if, for all $i \le s/2$, the symmetric bilinear form on $D^i$ given by $(x, y) \mapsto \deg_D(h^{s - 2i}xy)$ has signature $\sum_{j \le i} (-1)^j (\dim D^j - \dim D^{j-1})$. In particular, given a connected component $U_0$ of $U$, either the Hodge--Riemann relations hold for every $h \in U_0$, or the Hodge--Riemann relations fail for every $h \in U_0$. 
Let $U_{D,\HR}$ be the locus of elements of $D^1$ satisfying the Hodge--Riemann relations. 
We conclude that $U_{D,\HR}$
is an open cone which is a union of some of the connected components of $U$. 
The following result will be crucial in the proof of Theorem~\ref{thm:multiproj}. 

\begin{proposition}\cite[Proposition 1.2.2]{KK87}\label{prop:convex}
	Let $D = D^0 \oplus \dotsb \oplus D^s$ be a commutative graded $\mathbb{R}$-algebra  with an $\R$-linear map $\deg_D: D^s \to \R$. 
	Then every connected component of $U_{D,\HR}$ is convex.
\end{proposition}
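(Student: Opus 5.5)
The plan is to prove the mixed Hodge--Riemann relations for tuples drawn from a single component $U_0$ of $U_{D,\HR}$, and then obtain convexity by expanding powers of convex combinations. Concretely, the intermediate statement I would aim for is: for every $i \le s/2$ and every tuple $h_1, \dotsc, h_{s-2i} \in U_0$, the mixed element $H = h_1 \dotsb h_{s-2i}$ satisfies two conditions. First, multiplication by $H$ is an isomorphism $D^i \to D^{s-i}$. Second, the form $(x,y) \mapsto \deg_D(Hxy)$ on $D^i$ has the prescribed signature $\sum_{j\le i}(-1)^j(\dim D^j - \dim D^{j-1})$. This is equivalent to $(-1)^i\deg_D(Hxy)$ being positive definite on the kernel of $H \cdot h_{s-2i+1}$, for suitable auxiliary $h_{s-2i+1} \in U_0$.

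Granting this, convexity is formal. For $h = \sum_j t_j g_j$ with $t_j > 0$ and $g_j \in U_0$, write $h^{s-2i} = \sum \binom{s-2i}{\alpha} t^\alpha g^\alpha$. Each monomial form $\deg_D(g^\alpha xy)$ is a mixed form of the above type. The first step is to show that a positive combination of such forms, all with the same primitive decomposition, remains nondegenerate with the same signature. This yields hard Lefschetz and Hodge--Riemann for $h$ simultaneously, by induction on $i$ as in the usual $\mathfrak{sl}_2$-type argument. Thus the open segment between any two points of $U_0$ lies in $U_{D,\HR}$, and it is connected, so it lies in $U_0$.

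To prove the mixed statement I would use a deformation argument in the style of Cattani and Kashiwara--Kawai. Since $U_0$ is connected and open, any tuple in $U_0^{s-2i}$ can be joined by a path in $U_0^{s-2i}$ to a diagonal tuple $(h,\dotsc,h)$, where the statement is just the Hodge--Riemann relations for $h$. The signature of a nondegenerate form is locally constant along the path. So it suffices to show that nondegeneracy (mixed hard Lefschetz) never fails along the path.

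I would do this by induction on $s - 2i$. Suppose multiplication by $H = h_1 \dotsb h_{s-2i}$ had a kernel vector $x \in D^i$. Apply the inductive mixed Hodge--Riemann relations to $h_2 \dotsb h_{s-2i}$ acting between $D^i$ and $D^{s-i-1}$, and to the class $h_1 x$ in the quotient algebra $D/\ann(h_1)$. Combining the two should show that $x$ is primitive with respect to a Hodge--Riemann-definite form for which $\deg_D(Hx\,x) = 0$, forcing $x = 0$.

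I expect this last step to be the main obstacle. The inductive hypothesis has to be formulated so that it applies to quotients $D/\ann(h)$, and those are not a priori known to satisfy the Hodge--Riemann relations for elements of $U_0$. If the direct induction stalls, my first fallback is to localize. An open connected set is convex if and only if it is locally convex (Tietze--Nakajima). Near any point $h \in U_0$ the mixed statement is then automatic by continuity, and only the positive-combination argument of the second paragraph is needed to get local convexity.
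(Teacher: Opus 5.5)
The paper gives no proof of this statement. It quotes it from Kashiwara--Kawai and calls it a deep result. Your proposal leaves the essential content unproved.

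\textbf{The crux is not established.} Everything rests on your claim that mixed hard Lefschetz cannot fail along a path in $U_0^{s-2i}$. Your induction for this needs the Hodge--Riemann relations on $D/\operatorname{ann}(h_1)$ for elements of $U_0$, which is a descent statement of the type of Proposition~\ref{prop:descent}. That descent lemma is stated and proved for an \emph{open convex} cone on which the K\"{a}hler package holds. Invoking it for a component not yet known to be convex is circular. As it stands, nothing you write excludes a nonzero $x \in D^i$ with $h_1\cdots h_{s-2i}x = 0$ appearing somewhere along the path.

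\textbf{The Tietze--Nakajima fallback does not help.} Local convexity of an open set at its own points is automatic, since small balls are convex, and it implies nothing: an open annulus is locally convex at each of its points but is not convex. The criterion needs local convexity at points of $\partial U_0$. There the Hodge--Riemann forms degenerate, so continuity gives no information. The boundary behaviour is exactly the hard part.

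\textbf{The step you call formal is not formal.} Expanding $h^{s-2i}=\sum_\alpha \binom{s-2i}{\alpha} t^\alpha g^\alpha$ writes the Hodge--Riemann form of $h$ as a positive combination of the forms $(x,y)\mapsto\deg_D(g^\alpha xy)$. These forms do not share a primitive decomposition. Each is definite only on its own primitive space $\ker(g^\alpha g')\cap D^i$, and that space varies with $\alpha$. So for an $h$-primitive class $x$, the individual values $(-1)^i\deg_D(g^\alpha x^2)$ can have either sign.

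A positive combination of nondegenerate forms of equal signature can be degenerate or change signature. For example, $\operatorname{diag}(2,-1)$ and $\operatorname{diag}(-1,2)$ each have exactly one positive eigenvalue and are both positive at $(1,1)$, yet their sum is positive definite. So even if you had the mixed Hodge--Riemann relations for all tuples from $U_0$, you would still need an argument that hard Lefschetz persists on the segment joining two points of $U_0$.

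A mechanism ruling out degeneration on that segment, or at $\partial U_0$, is precisely what the Kashiwara--Kawai result supplies, and the proposal does not provide one.
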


We now show that it is automatic that $C$ has the K\"{a}hler package with respect to some inexplicit cone.

\begin{proposition}\label{prop:somecone}
There is a nonempty open convex cone $\mathcal{K} \subseteq \mathcal{K}_C$ such that $C$ has the K\"{a}hler package with respect to $\mathcal{K}$. 
\end{proposition}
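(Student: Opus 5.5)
The plan is to deform $C$ to a ring in which the $c_i$ vanish, check the Hodge--Riemann relations there, and transport them back by openness. Together with Proposition~\ref{prop:convex} this should give the cone. Poincar\'{e} duality for $C$ is already known from repeated application of Proposition~\ref{prop:PDprojbun}. So we only need a single class $\ell \in \mathcal{K}_C$ satisfying hard Lefschetz and the Hodge--Riemann relations in every degree.

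\textbf{Step 1: the deformation.} For $\varepsilon \in \mathbb{R}$, let $C_\varepsilon$ be the multiprojective bundle ring built from the classes $\varepsilon^i c_i$ in place of $c_i$. Via the decomposition $C_\varepsilon = \bigoplus \zeta_1^{i_1}\cdots\zeta_m^{i_m} A$, all the $C_\varepsilon$ share one graded vector space. Their structure constants and degree maps depend polynomially on $\varepsilon$, and we have $\deg_{C_\varepsilon} = \deg_C$ on the top piece.

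For $\varepsilon \neq 0$, the substitution $\zeta_i \mapsto \varepsilon \zeta_i'$ gives
\[
\zeta_i^r - \varepsilon c_1 \zeta_i^{r-1} + \dotsb = \varepsilon^r\bigl(\zeta_i'^r - c_1 \zeta_i'^{r-1} + \dotsb\bigr).
\]
Hence there is a graded isomorphism $C_\varepsilon \cong C$ that is the identity on $A$ and sends $\zeta_i \mapsto \varepsilon\zeta_i$. It preserves the degree maps up to the positive scalar $\varepsilon^{m(r-1)}$ when $\varepsilon>0$. Under this isomorphism, $h + \sum_i t_i\zeta_i \in C_\varepsilon$ corresponds to $h + \varepsilon\sum_i t_i \zeta_i \in \mathcal{K}_C$ whenever $h\in\mathcal{K}_A$ and $t_i > 0$.

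\textbf{Step 2: the limit $\varepsilon = 0$.} The ring $C_0$ is the tensor product $A \otimes \bigotimes_{i=1}^m \mathbb{R}[\zeta_i]/(\zeta_i^r)$, and its degree map is the product of the degree maps of the factors. Each truncated polynomial ring satisfies the Hodge--Riemann relations for $\zeta_i$. By the standard fact that the Hodge--Riemann relations are preserved under tensor products for sums of Lefschetz classes (via $\mathfrak{sl}_2$-representation theory and the primitive decomposition), the class $\ell_0 = h_0 + \zeta_1 + \dotsb + \zeta_m$ satisfies hard Lefschetz and Hodge--Riemann in $C_0$ for any fixed $h_0 \in \mathcal{K}_A$.

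\textbf{Step 3: openness and convexity.} Hard Lefschetz for $(\varepsilon,\ell)$ is an open condition: it is the nonvanishing of finitely many determinants that are polynomial in $\varepsilon$ and $\ell$. On the hard Lefschetz locus, the signatures of the forms $(x,y)\mapsto \deg(\ell^{s-2i}xy)$ are locally constant. By the reformulation of the Hodge--Riemann relations in terms of signatures given before Proposition~\ref{prop:convex}, Hodge--Riemann is therefore also an open condition in $(\varepsilon,\ell)$. So for small $\varepsilon>0$ the class $h_0 + \sum \zeta_i$ satisfies Hodge--Riemann in $C_\varepsilon$. By Step 1, $h_0 + \varepsilon\sum\zeta_i \in \mathcal{K}_C$ then lies in $U_{C,\HR}$.

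Let $U_0$ be the connected component of $U_{C,\HR}$ containing this class. It is open, and it is convex by Proposition~\ref{prop:convex}. Set $\mathcal{K} = U_0 \cap \mathcal{K}_C$. This is a nonempty open convex cone contained in $\mathcal{K}_C$, and $C$ has the K\"{a}hler package with respect to it.

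The main obstacle is Step 2, namely a clean justification that Hodge--Riemann passes to tensor products with the sum of Lefschetz classes. I would either cite it or prove it by decomposing each factor into $\mathfrak{sl}_2$-strings and checking the sign on primitive vectors of tensor products of strings via Clebsch--Gordan. A lesser point is making the signature-openness in Step 3 precise uniformly in $\varepsilon$ near $0$.
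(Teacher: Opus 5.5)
Your proposal is correct and is essentially the paper's argument. The paper applies Lemma~\ref{lem:deformationKahler} with $I=\emptyset$, which performs the same rescaling deformation (with independent parameters $\mu_i$ rather than your single $\varepsilon$) down to the ring with all $c_i=0$; it handles that ring by Proposition~\ref{prop:civanishexample}, which is exactly your Step 2 via \cite{AHK18}, transports back using openness of the Hodge--Riemann locus, and then takes any small open convex subcone of $U_{C,\HR}\cap\mathcal{K}_C$, so your appeal to Proposition~\ref{prop:convex} is harmless but unnecessary.
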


In order to prove this, we first 
recall the case when all $c_i$ vanish. 

\begin{proposition}\label{prop:civanishexample}\cite{AHK18}*{Proposition~7.7}
	If $c_i = 0$ for $1 \le i \le r$, then $C = A[\zeta_1,\ldots,\zeta_m]/(\zeta_1^r,\ldots,\zeta_m^r)$ has the K\"{a}hler package with respect to $\mathcal{K}_C$. 
\end{proposition}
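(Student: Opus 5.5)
The statement to prove is Proposition~\ref{prop:civanishexample}: when all $c_i=0$, the ring $C=A[\zeta_1,\ldots,\zeta_m]/(\zeta_1^r,\ldots,\zeta_m^r)$ has the K\"{a}hler package with respect to $\mathcal{K}_C$. The plan is to induct on $m$, so that it suffices to treat a single tensor factor $B=A[\zeta]/(\zeta^r)=A\otimes_{\mathbb{R}}\mathbb{R}[\zeta]/(\zeta^r)$ with the cone $\pi^*(\mathcal{K}_A)+\mathbb{R}_{>0}\zeta$. Then $C=B\otimes \mathbb{R}[\zeta_m]/(\zeta_m^r)$ recursively, and the cone $\mathcal{K}_C$ matches, since $p^*\mathcal{K}_A+\sum\mathbb{R}_{>0}\zeta_i$ is built one factor at a time.

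Poincar\'{e} duality for $C$ is already known from Proposition~\ref{prop:PDprojbun}. The core input is the standard fact that the K\"{a}hler package is preserved under tensor products, with the cone equal to the sum of the two cones. The second factor $\mathbb{R}[\zeta]/(\zeta^r)$ is the cohomology of $\mathbb{P}^{r-1}$ and has the K\"{a}hler package for $\zeta$.

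To prove the tensor product statement for $\ell=h\otimes 1+1\otimes t\zeta$ with $h\in\mathcal{K}_A$ and $t>0$, I use $\mathfrak{sl}_2$ representation theory. By hard Lefschetz, $h$ makes $A$ a graded $\mathfrak{sl}_2$-representation centered at degree $n/2$, and $\zeta$ does the same for $\mathbb{R}[\zeta]/(\zeta^r)$ centered at $(r-1)/2$. Since $\ell$ is the sum of the two raising operators, it is the raising operator of the tensor product representation, which is centered at $(n+r-1)/2$; this gives hard Lefschetz. For Hodge--Riemann, decompose $A$ and $\mathbb{R}[\zeta]/(\zeta^r)$ into Lefschetz strings generated by primitive vectors. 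The Hodge--Riemann relations in each factor say that the forms $(-1)^k\deg(h^{n-2k}xy)$ are definite on primitives and the decomposition is orthogonal, so each factor is a direct sum of polarized irreducible Hodge--Lefschetz modules. The tensor product of two polarized strings decomposes by Clebsch--Gordan, and the primitive vectors of the product are explicit combinations $\sum_j c_j\, \ell_1^{j}x\otimes \ell_2^{q-j}y$. A direct computation, the classical polarization of tensor products used in the Hodge theory of products or in \cite{AHK18}, then checks the signs on these primitives.

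The sign verification for the tensor product of polarized $\mathfrak{sl}_2$-strings is the main obstacle. I would try first to avoid the explicit computation with a continuity argument. By Proposition~\ref{prop:convex}, $U_{C,\HR}$ has convex components, and hard Lefschetz holds along the whole connected cone $\mathcal{K}_C$ by the $\mathfrak{sl}_2$ argument. The Hodge--Riemann signature is therefore constant on $\mathcal{K}_C$, so it suffices to verify it at one convenient point. Alternatively, one checks the signature numerically, since the expected signature $\sum_{j\le i}(-1)^j(\dim C^j-\dim C^{j-1})$ is determined by the dimensions of $A$ and the Clebsch--Gordan multiplicities, and then reads it off from the orthogonal string decomposition.
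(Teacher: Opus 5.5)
Your architecture is right, and it is in substance what the paper relies on. The paper gives no argument of its own; it cites \cite{AHK18}*{Proposition~7.7}, whose content is that if $\ell_1$ satisfies the Hodge--Riemann relations on $A_1$ and $\ell_2$ does on $A_2$, then $\ell_1\otimes 1+1\otimes\ell_2$ does on $A_1\otimes A_2$. Iterating this with $A_2=\mathbb{R}[\zeta]/(\zeta^r)$ and $\ell_2=t\zeta$ is exactly your induction on $m$, and your $\mathfrak{sl}_2$ proof of hard Lefschetz is fine. Simply invoking that result finishes the proof.

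As a self-contained argument, however, you leave the Hodge--Riemann step open, and neither workaround closes it. The continuity argument is correct but does no work. Hard Lefschetz and Poincar\'e duality on the connected cone do make the signature of $(x,y)\mapsto\deg_C(\ell^{N-2i}xy)$ constant on $\mathcal{K}_C$; Proposition~\ref{prop:convex} is not needed for this. But you then need one point of $\mathcal{K}_C$ where the relations are already known, and none is available:
\begin{itemize}
\item The automorphism $\zeta_i\mapsto t_i\zeta_i$ of $C$ rescales $\deg_C$ by $\prod_i t_i^{r-1}>0$, so no choice of the $t_i$ is easier than another.
\item The only other source of a good point in the paper is Proposition~\ref{prop:somecone}. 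Its proof goes through Lemma~\ref{lem:deformationKahler}, which uses the proposition you are proving, so this would be circular.
\end{itemize}
Likewise, reading the signature off the string decomposition presupposes the signs on the Clebsch--Gordan primitives, which is the very computation you wanted to avoid.

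The missing step is short. Work with one factor $P=\mathbb{R}[\zeta]/(\zeta^r)$; for the inductive step replace $A$ by the $(m-1)$-fold ring and $h$ by an element of its cone. Set $N=n+r-1$.

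\begin{enumerate}
\item Using hard Lefschetz and Hodge--Riemann on $A$, write $A=\bigoplus_\alpha S_\alpha$ with $S_\alpha=\operatorname{span}\{h^jx_\alpha : 0\le j\le n-2a_\alpha\}$. Here the $x_\alpha\in A^{a_\alpha}$ are primitive and chosen orthogonal for the Hodge--Riemann form in each degree.
\item The strings are pairwise orthogonal under $\deg_A$; when $a_\alpha<a_\beta$ this holds because $h^{n-2a_\beta+1}x_\beta=0$. Moreover $\deg_A(h^{n-2a_\alpha}x_\alpha^2)=(-1)^{a_\alpha}c_\alpha$ with $c_\alpha>0$.
\item Hence $A\otimes P=\bigoplus_\alpha S_\alpha\otimes P$ is an orthogonal decomposition into blocks preserved by $\ell=h+t\zeta$. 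It therefore suffices to check the relations block by block.
\item The map $h^jx_\alpha\otimes\zeta^b\mapsto u^j\zeta^b$ identifies $S_\alpha\otimes P$ with $H^*(\mathbb{P}^{p}\times\mathbb{P}^{r-1};\mathbb{R})=\mathbb{R}[u,\zeta]/(u^{p+1},\zeta^r)$, where $p=n-2a_\alpha$. This holds up to a degree shift by $a_\alpha$ and rescaling the pairing by $(-1)^{a_\alpha}c_\alpha$, and it carries $\ell$ to $u+t\zeta$.
\item Since $N-2i=(p+r-1)-2(i-a_\alpha)$ and $(-1)^{i}(-1)^{a_\alpha}=(-1)^{i-a_\alpha}$, the Hodge--Riemann form of $C$ in degree $i$ on this block is $c_\alpha$ times that of $\mathbb{P}^p\times\mathbb{P}^{r-1}$ in degree $i-a_\alpha$. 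The primitive subspaces match as well.
\end{enumerate}
So the claim follows from the classical Hodge--Riemann relations for the ample class $u+t\zeta$ on $\mathbb{P}^p\times\mathbb{P}^{r-1}$.

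If you prefer a purely algebraic route, use the Weil element $w=e^{\ell}e^{-f}e^{\ell}$, where $f$ is the lowering operator. Given hard Lefschetz, the Hodge--Riemann relations for $\ell$ are equivalent to positive definiteness of $(x,y)\mapsto(-1)^{\deg x}\deg(x\,w(y))$ on the whole algebra. Since $e^{\ell_1\otimes 1+1\otimes\ell_2}=e^{\ell_1}\otimes e^{\ell_2}$ and likewise for $f$, this form on $A_1\otimes A_2$ is the tensor product of two positive definite forms, hence positive definite.
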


Next we introduce some temporary notation. For $0 \le i \le m$, let $C_i$ denote the $i$-fold multiprojective bundle with degree map $\deg_{C_i}$ and cone $\K_{C_i}$. For example, $C_0 = A$ and $C_m = C$.

\begin{lemma}\label{lem:deformationKahler}
	Let $U_{C,\HR}$ be the open cone consisting of elements of $C^1$  that satisfy the Hodge--Riemann relations. 
	Consider a subset $I \subseteq \{ 1,\ldots, m \}$ and suppose that 
	$C_{|I|}$ has the  K\"{a}hler package with respect to $\mathcal{K}_{C_{|I|}}$. 
	Consider an element $h \in \K_A$ and the corresponding affine space $h + \sum_{i = 1}^m \R \zeta_i \subseteq C^1$.  
	Then there is an open set $U \subseteq h + \sum_{i = 1}^m \R \zeta_i$ satisfying the following properties:
	\begin{enumerate}
		\item $U \cap (h + \sum_{i = 1}^m \R_{> 0} \zeta_i) \subseteq U_{C,\HR}$,
		\item $U \cap (h + \sum_{i = 1}^m \R_{> 0} \zeta_i)$ is path connected,
		\item $h + \sum_{i \in I} \R_{> 0} \zeta_i \subseteq U$. 
	\end{enumerate}
	\end{lemma}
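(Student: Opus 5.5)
We will prove the lemma by a deformation argument, scaling the $\zeta_i$ with $i \notin I$ to be very small. The point is that on the slice where those coefficients are tiny, $C$ looks like a ring to which the hypothesis on $C_{|I|}$ applies.

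\textbf{Reduction.} After relabelling, assume $I = \{1, \dotsc, q\}$ with $q = |I|$. View $C$ as an $(m-q)$-fold multiprojective bundle ring over $C_q$, where $C_q$ is generated over $A$ by $\zeta_1, \dotsc, \zeta_q$. By hypothesis, $C_q$ has the K\"{a}hler package with respect to $\mathcal{K}_{C_q}$. Fix a point $g = h + \sum_{i \le q} t_i \zeta_i$ with all $t_i > 0$; this lies in $\mathcal{K}_{C_q}$.

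\textbf{Step 1: small deformation in the remaining directions.} We want to show that $g + \sum_{j > q} \epsilon_j \zeta_j$ satisfies the Hodge--Riemann relations in $C$ when all $\epsilon_j > 0$ are sufficiently small. To do this, introduce a one-parameter rescaling. Replace $c_i$ by $u^i c_i$ and $\zeta_j$ by $u\zeta_j$ for $j > q$. Equivalently, consider the family of rings $C(u) = C_q[\zeta_{q+1}, \dotsc]/(\zeta_j^r - u c_1 \zeta_j^{r-1} + \dotsb)$, in which the $c_i$ in the new relations are multiplied by $u^i$ (the $c_i$ of the base stay fixed). The family has constant underlying graded vector space and varying product. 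At $u = 0$, the relations become $\zeta_j^r = 0$, and Proposition~\ref{prop:civanishexample} (applied over the base $C_q$) shows that $g + \sum_{j>q} \zeta_j$ satisfies the Hodge--Riemann relations there.

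The Hodge--Riemann relations are an open condition on the pair (structure constants, class). They amount to hard Lefschetz isomorphisms together with a fixed signature, and the signature is locally constant wherever hard Lefschetz holds. Hence they persist for small $u$. Rescaling identifies $C(u)$ with $C$ via $\zeta_j \mapsto u^{-1}\zeta_j$, up to a positive scaling of $\deg$, which sends the class to $g + u\sum_{j>q} \zeta_j$. The same argument works with the weights $\epsilon_j$ varying in a compact set of positive vectors, using the uniformity of openness over compacta. We conclude that a neighbourhood of $g$ in $h + \sum_{i=1}^m \R\zeta_i$, intersected with the open positive orthant, lies in $U_{C,\HR}$.

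\textbf{Step 2: build $U$.} For each $g$ as above, choose a small open box $V_g$ around $g$ in the affine space such that $V_g \cap (h + \sum_i \R_{>0}\zeta_i) \subseteq U_{C,\HR}$. Take convex boxes, so that each intersection with the positive orthant is convex, and in particular path connected. Let $U = \bigcup_g V_g$. Properties (1) and (3) are then immediate.

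Property (2) needs care. The region $h + \sum_{i \in I}\R_{>0}\zeta_i$ is path connected, and each piece $V_g \cap (\text{orthant})$ is path connected. We would like every point of such a piece to connect to a nearby point of the form $g' + \sum_{j>q}\epsilon_j\zeta_j$, and then through the union of pieces along the face $h + \sum_{i\in I}\R_{>0}\zeta_i$. To make this work, choose the boxes so that each one meets the positive orthant near $g$ in a box of the form (neighbourhood of $g$ in the face) $\times\, (0,\delta)^{m-q}$. Overlapping such boxes along the connected face then gives path connectivity of the union.

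\textbf{Main obstacle.} The hardest step is Step 1: making the degeneration argument rigorous. One has to check that the rescaled family really is a continuous family of graded algebras with fixed Poincar\'{e} pairing, and that the Hodge--Riemann property is open in this family. The latter follows from the signature characterization recalled before Proposition~\ref{prop:convex}: the signature is constant on hard-Lefschetz loci, and hard Lefschetz is an open determinant condition. One must also check that $\deg$ rescales by a positive factor under the identification of $C(u)$ with $C$, so that signs are preserved.
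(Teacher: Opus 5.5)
Your overall plan is the paper's. You degenerate the relations for $\zeta_j$, $j\notin I$, to $\zeta_j^r=0$ by rescaling, apply Proposition~\ref{prop:civanishexample} over $C_{|I|}$, and use that the Hodge--Riemann relations are open (hard Lefschetz plus a locally constant signature, with $\deg$ rescaled by a positive factor). You then glue convex neighbourhoods along the connected face, and your chaining argument in Step 2 is fine.

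\textbf{The gap.} A single parameter $u$ in Step 1 does not prove what you conclude there. Make openness at $u=0$ uniform over a compact set $K$ of strictly positive weight vectors $w$ and over $g'$ near $g$ in the face. This only certifies the classes $g'+u\sum_{j>q}w_j\zeta_j$ with $w\in K$ and $0<u<u_0(K)$. That set is a truncated cone on which the ratios of the coefficients of $\zeta_{q+1},\dots,\zeta_m$ are bounded above and below. When $m-|I|\ge 2$ it contains no set $V\cap(h+\sum_{i=1}^m\R_{>0}\zeta_i)$ with $V$ a neighbourhood of $g$; for instance, the points $g+\delta\zeta_{q+1}+\delta^2\zeta_{q+2}$ escape it as $\delta\to 0$.

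Nor can you let $K$ approach the boundary of the orthant with a uniform $u_0$. At $u=0$, a class $g+\sum_{j>q}w_j\zeta_j$ with some $w_{j_0}=0$ lies outside the K\"ahler cone of $C_q[\zeta_{q+1},\dots,\zeta_m]/(\zeta_{q+1}^r,\dots,\zeta_m^r)$. For $r\ge 2$ it lies in a subring whose top degree is smaller than that of the fibre, so its top power vanishes and hard Lefschetz already fails in degree $0$. Openness at such points therefore gives nothing.

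This matters because $U$ must be open and contain the face $h+\sum_{i\in I}\R_{>0}\zeta_i$, so property (1) requires exactly the full-neighbourhood statement. The boxes $(\text{neighbourhood in the face})\times(0,\delta)^{m-q}$ used in Step 2 are therefore not justified by Step 1. For $m-|I|\le 1$ your one-parameter argument does suffice.

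\textbf{The repair.} Give each $\zeta_j$, $j\notin I$, its own scaling parameter. Consider the family over $u=(u_j)_{j\notin I}\in\R^{m-|I|}$ whose $j$th relation is $\sum_{k=0}^r(-1)^k u_j^k c_k\zeta_j^{r-k}$. Its fibre at $u=0$ is $C_{|I|}[\zeta_j : j\notin I]/(\zeta_j^r : j\notin I)$. Openness of the Hodge--Riemann relations for the class $g+\sum_{j\notin I}\zeta_j$, jointly in $g$ and $u$, then gives a full neighbourhood of $(g,0)$. For $u$ in this neighbourhood with all $u_j>0$, the isomorphism $\zeta_j\mapsto u_j\zeta_j$ onto $C$ multiplies $\deg$ by $\prod_{j\notin I}u_j^{r-1}>0$ and carries the class to $g+\sum_{j\notin I}u_j\zeta_j$.

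This is precisely the paper's family $D_\mu$, with the isomorphisms $\phi_\mu$ and the neighbourhoods $W_\rho$ of points $\rho$ with $\Supp(\rho)=I$. With that change, your Step 2 goes through and matches the paper's chaining argument.
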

\begin{proof}
	Let $\mu = (\mu_1,\ldots,\mu_m) \in \R_{\ge 0}^m$. The support of $\mu$, denoted $\Supp(\mu)$, is $\{ i : \mu_i \neq 0 \} \subseteq \{ 1, \ldots, m \}$.
	The following construction is inspired by  \cite{IMMSW}*{Theorem~8.3}. 
	Consider the iterated projective bundle
	\[
	D_\mu := \frac{A[\zeta_1, \dotsc, \zeta_m]}{(\sum_{j = 0}^r (-1)^j c_j \mu_i^j \zeta_i^{r - j} : 1 \le i \le m)},
	\]
with corresponding degree map $\deg_{D_\mu} \colon D_\mu^{n + m(r - 1)} \to \R$.
Consider the iterated projective bundle
\[
D_\mu' := \frac{A[\zeta_1, \dotsc, \zeta_m]}{(\sum_{j = 0}^r (-1)^j c_j \zeta_i^{r - j} : i \in \Supp(\mu)) + (\zeta_i^r : i \notin \Supp(\mu))},
\]
with corresponding degree map $\deg_{D_\mu'} \colon (D_\mu')^{n + m(r - 1)} \to \R$. Then we have an isomorphism of graded $\R$-algebras
\[
\phi_\mu \colon D_\mu \to D_\mu', \quad 
\phi_\mu(\zeta_i) = \begin{cases}
	\mu_i \zeta_i &\textrm{if } i \in \Supp(\mu), \\
	\zeta_i &\textrm{otherwise. }  \\
\end{cases}
\]
Moreover, $\deg_{D_\mu}$ is equal to the composition of  $\deg_{D_\mu'}$ and  $\phi_\mu$ up to multiplication by a positive scalar, and $\phi_\mu(\K_{D_\mu}) = \K_{D_\mu'}$. 

Consider $\rho = (\rho_1,\ldots,\rho_m) \in \mathbb{R}^m_{\ge 0}$ with support $I$. By definition, $D_\rho' \cong C_{|I|}[z_1,\ldots,z_{m - |I|}]/(z_1^r,\ldots,z_{m - |I|}^r)$. By assumption, $C_{|I|}$ has the  K\"{a}hler package with respect to $\mathcal{K}_{C_{|I|}}$. Proposition~\ref{prop:civanishexample} then implies that $D_{\rho}'$ has the K\"{a}hler package with respect to $\mathcal{K}_{D_{\rho}'}$, and we deduce that $D_{\rho}$ has the K\"{a}hler package with respect to $\mathcal{K}_{D_{\rho}}$. 
In particular, $D_\rho$ satisfies the Hodge--Riemann relations with respect to $h + \zeta_1 + \cdots + \zeta_m \in \mathcal{K}_{D_{\rho}}$. 

Fix a homogeneous $\R$-basis $\{ a_{i,j} : 1 \le j \le \dim A^i \}$ for each $A^i$. 
By repeatedly applying \eqref{eq:Bdirectsum}, $\{ a_{i,j} \zeta_1^{k_1}\cdots \zeta_m^{k_m} : 0 \le i \le n, \, 1 \le j \le \dim A^i, \, 0 \le k_1,\ldots, \, k_m \le r - 1 \}$ is an $\R$-basis for $D_\mu$. With respect to this basis, in any given degree, the Hodge--Riemann form associated to $h + \zeta_1 + \cdots + \zeta_m$ is a matrix whose entries are polynomials in $\mu_1,\ldots,\mu_m$. It follows that there is an open neighborhood $W_\rho$ of $\rho$ in $\R^m$ such that 
$D_\mu$ satisfies the Hodge--Riemann relations with respect to $h + \zeta_1 + \cdots + \zeta_m$ for all 
$\mu \in W_\rho \cap \R_{\ge 0}^m$. 

Consider some 
$\mu \in W_\rho \cap \R_{> 0}^m$. 
Since $\Supp(\mu) = \{ 1,\ldots, m\}$, we have $D_\mu' = C$. Applying the isomorphism $\phi_\mu \colon D_\mu \to C$, we see that $h + \mu_1\zeta_1 + \cdots + \mu_m \zeta_m \in U_{C,\HR}$. Let 
$$U_\rho = \{ h + \mu_1\zeta_1 + \cdots + \mu_m \zeta_m : (\mu_1,\ldots,\mu_m) \in W_\rho \}, \quad \text{and} \quad U = \bigcup_{\substack{\rho \in \R^m_{\ge 0} \\ \Supp(\rho) = I}} U_\rho.$$ 
Then  $U$ is an open subset of $h + \sum_{i = 1}^m \R \zeta_i$ such that
$U \cap (h + \sum_{i = 1}^m \R_{> 0} \zeta_i) \subseteq U_{C,\HR}$. Also, by construction, $h + \sum_{i \in I} \R_{> 0} \zeta_i \subseteq U$. It remains to show that
$U \cap (h + \sum_{i = 1}^m \R_{> 0} \zeta_i)$ is path connected. 
Consider $\rho, \rho' \in \R^m_{\ge 0}$ with $\Supp(\rho) = \Supp(\rho') = I$. Consider the compact interval $[\rho,\rho'] \subseteq \{ \mu \in \R^m_{\ge 0} : \Supp(\mu) = I \}$. Then there is a finite sequence $\rho = \mu^1, \mu^2, \ldots, \mu^s = \rho'$ of elements in $[\rho,\rho']$ such that 
$U_{\mu^{i - 1}} \cap U_{\mu^{i}} \cap (h + \sum_{i = 1}^m \R_{> 0} \zeta_i) \neq \emptyset$ for $1 < i \le s$.  Since each $U_{\mu^i} \cap (h + \sum_{i = 1}^m \R_{> 0} \zeta_i)$ is path connected, it follows that there is a path in $U$ from any point in $U_{\rho} \cap (h + \sum_{i = 1}^m \R_{> 0} \zeta_i)$ to any point in $U_{\rho'} \cap (h + \sum_{i = 1}^m \R_{> 0} \zeta_i)$. 
\end{proof}

We now prove Proposition~\ref{prop:somecone} and Theorem~\ref{thm:multiproj}. We continue with the notation from Lemma~\ref{lem:deformationKahler}. 

\begin{proof}[Proof of Proposition~\ref{prop:somecone}]
	Applying Lemma~\ref{lem:deformationKahler} with $I = \emptyset$ implies that $U_{C,\HR} \cap \K_C \neq \emptyset$. Let $\K$ be any nonempty open convex subcone of $U_{C,\HR} \cap \K_C$; then $C$ has the K\"{a}hler package with respect to $\mathcal{K}$.
\end{proof}

\begin{proof}[Proof of Theorem~\ref{thm:multiproj}]
	
Suppose that $B$ satisfies the K\"{a}hler package with respect to $\mathcal{K}_B$. We need to show that $C$ has the K\"{a}hler package with respect to $\mathcal{K}_C$.  
Fix an element $h \in \K_A$.

Consider a subset $I \subseteq \{ 1, \ldots, m \}$ with $|I| \le 1$. Then Lemma~\ref{lem:deformationKahler} implies that 
 there is an open set $U_I \subseteq h + \sum_{i = 1}^m \R \zeta_i$ satisfying the following properties:
\begin{enumerate}
	\item $U_I \cap (h + \sum_{i = 1}^m \R_{> 0} \zeta_i) \subseteq U_{C,\HR}$,
	\item $U_I \cap (h + \sum_{i = 1}^m \R_{> 0} \zeta_i)$ is path connected,
	\item $h + \sum_{i \in I} \R_{> 0} \zeta_i \subseteq U_I$. 
\end{enumerate}
Let 
$$U = \bigcup_{\substack{I \subseteq \{ 1, \ldots, m \} \\ |I| \le 1}} U_I.$$
Then $U \cap (h + \sum_{i = 1}^m \R_{> 0} \zeta_i) \subseteq U_{C,\HR}$. Since $U_\emptyset \cap U_{\{ j \}} \cap (h + \sum_{i = 1}^m \R_{> 0} \zeta_i)$ is nonempty for $1 \le j \le m$, 
we see that $U \cap (h + \sum_{i = 1}^m \R_{> 0} \zeta_i)$ is path connected. Hence $U \cap (h + \sum_{i = 1}^m \R_{> 0} \zeta_i)$ is contained in a connected component of $U_{C,\HR}$. 
Then Proposition~\ref{prop:convex} implies that the convex hull 
$\Conv{U \cap (h + \sum_{i = 1}^m \R_{> 0} \zeta_i)}$ of $U \cap (h + \sum_{i = 1}^m \R_{> 0} \zeta_i)$ is contained in $U_{C,\HR}$. 

It remains to show that $\Conv{U \cap (h + \sum_{i = 1}^m \R_{> 0} \zeta_i)} = h + \sum_{i = 1}^m \R_{> 0} \zeta_i$. Since $h + \sum_{i = 1}^m \R_{> 0} \zeta_i$ is convex, we have $\Conv{U \cap (h + \sum_{i = 1}^m \R_{> 0} \zeta_i)} \subseteq h + \sum_{i = 1}^m \R_{> 0} \zeta_i$. Conversely,
consider an element $h + \mu_1 \zeta_1 + \cdots + \mu_m \zeta_m \in h + \sum_{i = 1}^m \R_{> 0} \zeta_i$. 
Consider $0 < \varepsilon \ll 1$. 
Let $$\rho_j = h + m (\mu_j - \varepsilon) \zeta_j + \varepsilon (\zeta_1 + \cdots + \zeta_m) \in U_{\{ j \}} \cap (h + \sum_{i = 1}^m \R_{> 0} \zeta_i)$$ for $1 \le j \le m$. 
Then $h + \mu_1 \zeta_1 + \cdots + \mu_m \zeta_m = \frac{1}{m}(\rho_1 + \cdots + \rho_m)$ lies in $\Conv{U \cap (h + \sum_{i = 1}^m \R_{> 0} \zeta_i)}$.
\end{proof}

\begin{remark}\label{rem:multiple}
The proof of Theorem~\ref{thm:multiproj} shows the following stronger statement. Suppose that, for $1 \le j \le m$, we have integers $r_j$ and classes $c_{j, 1}, c_{j,2}, \dotsc, c_{j, r_j}$ such that, for each $j$, the ring
$$B_j = \frac{A[\zeta]}{(\zeta^{r_j} - c_{j,1} \zeta^{r_j - 1} + \dotsb + (-1)^{r_j} c_{j, r_j})},$$
with corresponding inclusion $\pi_j^*: A \to B_j$
satisfies the K\"{a}hler package with respect to 
$K_{B_j} = \pi_j^* (\mathcal{K}_A) + \mathbb{R}_{>0} \zeta$.
Then the ring
$$\frac{A[\zeta_1, \dotsc, \zeta_m]}{(\zeta_1^{r_1} - c_{1,1} \zeta_1^{r_1 - 1} + \dotsb + (-1)^{r_1} c_{1, r_1}, \dotsc, \zeta_m^{r_m} - c_{m, 1} \zeta_m^{r_m - 1} + \dotsb + (-1)^{r_m} c_{m, r_m})}$$
satisfies the K\"{a}hler package with respect to the cone given by positive linear combinations of the images of elements of $\mathcal{K}_A$ and $\zeta_1, \dotsc, \zeta_m$. 
\end{remark}

We now recall another result about the K\"{a}hler package that will be crucial in what follows. It was first proved in \cite[Lemma 1.16]{CKS87}, where it is called the descent lemma, and in \cite[Theorem 2.1.5]{KK87}. See \cite[Theorem 3.2]{Cat08} and \cite[Lemma 4.5]{LefschetzModule} for the formulation which appears here. 

Let $D = D^0 \oplus \dotsb \oplus D^s$ be a commutative graded $\mathbb{R}$-algebra which is equipped with an $\R$-linear map $\deg_D \colon D^s \to \mathbb{R}$ that satisfies Poincar\'{e} duality. For $x \in D$, let $\operatorname{ann}(x)$ denote the annihilator of $x$. Assume that $x$ lies in $D^t$ for some $t$, and let $\varphi \colon D \to D/\operatorname{ann}(x)$ be the natural projection. There is an induced degree map $\deg_{D/\operatorname{ann}(x)}$ from the degree $s - t$ part of $D/\operatorname{ann}(x)$ to $\mathbb{R}$ which satisfies
$$\deg_{D/\operatorname{ann}(x)}(\varphi(y)) = \deg_D(xy)$$
for all $y \in D^{s - t}$. Then $D/\operatorname{ann}(x)$ satisfies Poincar\'{e} duality because Poincar\'{e} duality for $D$ implies that $\operatorname{ann}(x)$ is the kernel of the pairing $(a, b) \mapsto \deg_D(x ab)$. Note that this holds even if $x=0$. 

\begin{proposition}\label{prop:descent}
Suppose that $D$ has the K\"{a}hler package with respect to an open convex cone $\mathcal{K}_D$. Then for any $\ell \in \overline{\mathcal{K}}_D$, $D/\operatorname{ann}(\ell)$ satisfies the K\"{a}hler package with respect to the image of $\mathcal{K}_D$. 
\end{proposition}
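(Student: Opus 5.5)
The plan is a deformation argument in the pair $(\ell,h)$. First I settle the diagonal case $\ell=h$. Next I extend to all $\ell\in\mathcal{K}_D$ by a continuity argument. Finally I pass to the boundary $\overline{\mathcal{K}}_D\smallsetminus\mathcal{K}_D$ by a limiting argument.

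\emph{Notation.} Write $Q_\ell=D/\operatorname{ann}(\ell)$. It has top degree $s-1$ and degree map $\deg_{Q_\ell}(\varphi(y))=\deg_D(\ell y)$. By the discussion preceding the statement, $Q_\ell$ satisfies Poincar\'{e} duality. For $h\in\mathcal{K}_D$ and $k\le (s-1)/2$, the relevant objects are the form
$$(x,y)\mapsto(-1)^k\deg_D(\ell h^{s-1-2k}xy)$$
on $Q_\ell^k=D^k/\operatorname{ann}(\ell)^k$, and the Lefschetz map $h^{s-1-2k}\colon Q_\ell^k\to Q_\ell^{s-1-k}$. By Poincar\'{e} duality for $Q_\ell$, this map is an isomorphism exactly when the form is nondegenerate on $Q_\ell^k$. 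So hard Lefschetz reduces to a nondegeneracy statement. Once hard Lefschetz holds in all degrees, the Hodge--Riemann relations follow from the signature criterion recalled before Proposition~\ref{prop:convex}.

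\emph{Step 1: the case $\ell=h$.} For $k\le (s-1)/2$ we have $k<s/2$, so hard Lefschetz for $D$ makes $h$ injective on $D^k$. Hence $Q_h^k=D^k$. The form on $Q_h^k$ is $(-1)^k\deg_D(h^{s-2k}xy)$, which is the Hodge--Riemann form of $D$ in degree $k$. The primitive subspace of $Q_h^k$, namely $\ker(h^{s-1-2k+1})$ computed in $Q_h$, consists of those $x$ with $h^{s-2k}x\in\operatorname{ann}(h)$. These are exactly the $x$ with $h^{s-2k+1}x=0$, i.e.\ the primitive subspace of $D^k$. So the Hodge--Riemann relations for $Q_h$ follow directly from those of $D$, and hard Lefschetz for $Q_h$ follows as well.

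\emph{Step 2: $\ell\in\mathcal{K}_D$.} I would consider the connected set of pairs $(\ell,h)\in\mathcal{K}_D\times\mathcal{K}_D$, or more simply the segment $\ell_t=(1-t)h+t\ell$. The difficulty is that $Q_{\ell_t}$ varies with $t$. To get around this, I would work on the fixed space $D^k$ with the degenerate forms
$$(-1)^k\deg_D(\ell_t h^{s-1-2k}xy).$$
The radical of this form contains $\operatorname{ann}(\ell_t)^k$, with equality exactly when hard Lefschetz holds for $Q_{\ell_t}$. Hodge--Riemann for $Q_{\ell_t}$ becomes a statement about the signature of these forms and the rank of $\operatorname{ann}(\ell_t)$, which is constant on $\mathcal{K}_D$ by hard Lefschetz for $D$. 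Signatures are locally constant while nondegeneracy persists. It therefore suffices to prove that hard Lefschetz holds for $Q_{\ell'}$ with respect to $h$, for all $\ell',h\in\mathcal{K}_D$.

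For this I would argue by contradiction at the first failing parameter. Suppose $x\in D^k$ has $\ell' h^{s-1-2k}x\in\operatorname{ann}(\ell')$ but $\ell'x\neq0$. Using the Lefschetz decomposition of $D$ with respect to $h$, I would pair $\ell'x$ against suitable classes and use positivity from the Hodge--Riemann relations of $D$ to derive a contradiction. This is essentially a mixed Hodge--Riemann statement for the two classes $\ell'$ and $h$. Proposition~\ref{prop:convex} may shortcut part of this: it gives convexity of the components of the locus of valid $h$ for the fixed ring $Q_{\ell'}$, once that locus is known to contain one point.

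\emph{Step 3: $\ell\in\overline{\mathcal{K}}_D$.} Take $\ell_\varepsilon=\ell+\varepsilon h$. The forms $(-1)^k\deg_D(\ell_\varepsilon h^{s-1-2k}xy)$ restricted to primitive classes converge as $\varepsilon\to0$, which gives positive semidefiniteness in the limit. To upgrade this to definiteness, and to obtain hard Lefschetz for $Q_\ell$, I need to show the limiting form has no radical beyond $\operatorname{ann}(\ell)$. This is the main obstacle and the genuine content of the descent lemma of \cite{CKS87} and \cite{KK87}. I would first try an $\mathfrak{sl}_2$ or Lefschetz-decomposition argument with respect to $h$, tracking the leading order in $\varepsilon$ of the primitive components. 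The fallback is to follow \cite[Theorem 3.2]{Cat08} directly.
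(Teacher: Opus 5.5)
Your Step 1 is correct, and so is the reduction at the start of Step 2. For $\ell'\in\mathcal{K}_D$ and $k\le(s-1)/2$ one has $Q_{\ell'}^k=D^k$, and the dimensions of $\operatorname{ann}(\ell')$ are constant on $\mathcal{K}_D$. So once hard Lefschetz is known for every pair $(\ell',h)\in\mathcal{K}_D\times\mathcal{K}_D$, local constancy of signatures on this connected set, together with the diagonal base points from Step 1, gives Hodge--Riemann.

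The gap is the statement you reduce to: that $\ell' h^{s-1-2k}\colon D^k\to D^{s-k}$ is an isomorphism for all $\ell',h\in\mathcal{K}_D$. This is exactly the interior case of the proposition (mixed hard Lefschetz for two classes), and you give no proof of it. ``Pair $\ell'x$ against suitable classes and use positivity'' names the goal, not an argument. It is also not a routine consequence of single-class Hodge--Riemann; the standard derivations (e.g.\ \cite{Cat08}) go through the nilpotent-orbit results of \cite{CKS87}. Proposition~\ref{prop:convex} does not fill this. It says the component of $U_{Q_{\ell'},\HR}$ through $\ell'$ is convex, but you know no other points of that component. In the proof of Theorem~\ref{thm:multiproj}, by contrast, a deformation argument supplied a set whose convex hull is the whole cone. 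Convexity of a component known to contain one point says nothing about whether it contains $\mathcal{K}_D$.

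Step 3 has the same problem in a sharper form, as you acknowledge. Letting $\ell+\varepsilon h\to\ell$ gives only semicontinuity: the limiting form $(-1)^k\deg_D(\ell h^{s-1-2k}xy)$ on $D^k$ has at most as many positive (resp.\ negative) eigenvalues as the nearby forms. That can neither show its radical is exactly $\operatorname{ann}(\ell)^k$ nor fix the signature on $Q_\ell^k$. The reason is that for boundary $\ell$ the ideal $\operatorname{ann}(\ell)$ can be nonzero in degrees below $k$ (e.g.\ the pullback of the point class of $\mathbb{P}^1$ to $\mathbb{P}^1\times\mathbb{P}^4$, with $k=2$), so dimension counts do not determine how many eigenvalues are lost, or with which sign.

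For the same reason, your semidefiniteness claim holds only on limits of the nearby primitive subspaces, which have dimension $\dim D^k-\dim D^{k-1}$. If the proposition holds, the preimage in $D^k$ of the primitive subspace of $Q_\ell^k$ is larger by $\dim\operatorname{ann}(\ell)^{k-1}$. There is also no analogue of the Step 1 base point when $\ell\notin\mathcal{K}_D$. Controlling which eigenvalues degenerate, and with which sign, is precisely what the $\mathfrak{sl}_2$/nilpotent-orbit arguments of \cite{CKS87} and \cite{KK87} accomplish.

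The paper does not reprove this statement. It imports it from \cite[Lemma 1.16]{CKS87} and \cite[Theorem 2.1.5]{KK87}, in the formulation of \cite[Theorem 3.2]{Cat08} and \cite[Lemma 4.5]{LefschetzModule}. Your proposal defers to the same sources for both nontrivial steps, so as written it is an outline missing its essential content, not an independent proof.
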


We will often apply this result repeatedly to a product of several elements of $\overline{\mathcal{K}}_D$. We will frequently use that, for elements $\ell_1, \dotsc, \ell_t \in \overline{\mathcal{K}}_D$, $D/\operatorname{ann}(\ell_1 \dotsb \ell_t)$ satisfies the K\"{a}hler package with respect to the image of $\mathcal{K}_D$, and for any $y \in D^{s - t}$, we have
$$\deg_D(y \ell_1 \dotsb \ell_t) = \deg_{D/\operatorname{ann}(\ell_1 \dotsb \ell_t)}(y).$$
We now prove a variation on the fact that a direct sum of nef line bundles is a nef vector bundle. 

\begin{proposition}\label{prop:nefdirectsum}
Let $\ell_1, \dotsc, \ell_r$ be elements of $\overline{\mathcal{K}}_A$. Then $B = A[\zeta]/((\zeta - \ell_1)\dotsb (\zeta - \ell_r))$ satisfies the K\"{a}hler package with respect to $\K_B$. 
\end{proposition}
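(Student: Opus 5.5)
Here $B = A[\zeta]/((\zeta-\ell_1)\dotsb(\zeta-\ell_r))$. The plan is to realize $B$ as a quotient, by an annihilator, of a ring already known to have the Kähler package, and then apply the descent lemma (Proposition~\ref{prop:descent}).

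\emph{Step 1: the $r=1$ case.} Take $B_j = A[\zeta]/(\zeta - \ell_j)$. This is a rank one projective bundle ring, so $B_j \cong A$ with $\zeta \mapsto \ell_j$, and $\K_{B_j} = \pi^*\K_A + \R_{>0}\ell_j$. Since $\ell_j \in \overline{\K}_A$ and $\K_A$ is an open convex cone, $\K_A + \overline{\K}_A \subseteq \K_A$. Hence $\K_{B_j} = \K_A$ under this identification, and $B_j$ has the Kähler package with respect to $\K_{B_j}$.

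\emph{Step 2: the product ring.} Remark~\ref{rem:multiple} then applies with $r_j = 1$, and gives that
\[
C = A[\zeta_1,\dotsc,\zeta_r]/(\zeta_1 - \ell_1, \dotsc, \zeta_r - \ell_r) \cong A
\]
has the Kähler package. This is trivial and not yet what we need, since the extra variable $\zeta$ is missing.

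\emph{Step 3: the correct auxiliary ring.} Instead, use the ring
\[
E = A[\zeta]/(\zeta^r)\,[\xi]/(\xi - \zeta)\dotsb .
\]
A cleaner choice is to build
\[
\widetilde{B} = A[\zeta_1,\dots,\zeta_r]/(\zeta_i^{2} - \ell_i\zeta_i),
\]
which is an iterated projective bundle ring of rank $2$ with $c_1 = \ell_i$ and $c_2 = 0$. Its rank-one factors $A[\zeta]/(\zeta^2 - \ell_i\zeta)$ have the Kähler package, because $(\zeta - \ell_i)\zeta$ is the product of the two nef classes $\ell_i$ and $0$. More generally, I would prove Proposition~\ref{prop:nefdirectsum} by induction on $r$, following the deformation argument of Lemma~\ref{lem:deformationKahler}. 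For this, consider the family of rings
\[
B_t = A[\zeta]/\Bigl(\prod_i (\zeta - t\ell_i)\Bigr), \qquad t \in [0,1].
\]
Each $B_t$ is of the same form, since $t\ell_i \in \overline{\K}_A$. The ring $B_0 = A[\zeta]/(\zeta^r)$ has the Kähler package with respect to $\K_{B_0}$ by Proposition~\ref{prop:civanishexample}.

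\emph{Step 4: the main point, openness of the Hodge--Riemann locus along the family.} We must show that the Hodge--Riemann relations hold for every $h + c\zeta$ (with $h \in \K_A$, $c > 0$) in every $B_t$. By Proposition~\ref{prop:convex} and the signature argument, it suffices to show that hard Lefschetz never fails along the connected set of pairs $(t, h + c\zeta)$: signatures are then locally constant, and they start out correct at $t = 0$.

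To check hard Lefschetz, suppose $x \in B_t^k$ satisfies $(h + c\zeta)^{N-2k}x = 0$. Then I would pass to the ring $A[\zeta_1,\dots,\zeta_r]$ in which $\zeta$ is sent to each root $t\ell_i$. Using the injection
\[
B_t \hookrightarrow \prod_i A[\zeta]/(\zeta - t\ell_i)
\]
fails in general, so instead I would use a filtration of $B_t$ by the ideals generated by $\prod_{i \le j}(\zeta - t\ell_i)$. Its graded pieces are shifts of $A$ on which $\zeta$ acts by $t\ell_{j+1}$, and $h + ct\ell_{j+1} \in \K_A$. Hard Lefschetz on these graded pieces then comes from $A$, with shifted degrees arranged symmetrically.

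I expect this last step to be the main obstacle: deducing hard Lefschetz for the total ring from hard Lefschetz on the graded pieces requires the filtration to be compatible with the Poincaré pairing. The first thing I would try is the descent lemma, realizing $B_t$ as $D/\operatorname{ann}(\ell)$ for a suitable product ring $D$ (the multiprojective ring over the graded pieces) and a class $\ell$ that is a product of nef classes.
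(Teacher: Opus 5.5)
Your reduction to hard Lefschetz is sound. Deforming $\ell_i \mapsto t\ell_i$ from $B_0 = A[\zeta]/(\zeta^r)$ (Proposition~\ref{prop:civanishexample}) and using that Hodge--Riemann signatures are locally constant wherever hard Lefschetz holds does the same job as the paper's appeal to Proposition~\ref{prop:somecone}, whose proof is exactly such a deformation. Steps 1--3 are dead ends, and the claim that $A[\zeta]/(\zeta^2 - \ell_i\zeta)$ has the K\"ahler package is just the case $r=2$ of the statement.

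The genuine gap is that hard Lefschetz for $h + c\zeta$, which is the whole content of the proposition, is never proved, and the filtration route cannot give it. In the $A$-basis $P_j = \prod_{i \le j}(\zeta - t\ell_i)$, $0 \le j \le r-1$, of $B_t$ one has $\zeta P_j = t\ell_{j+1}P_j + P_{j+1}$. So $L = h + c\zeta$ acts on $\operatorname{gr}_F B_t = \bigoplus_j A[-j]$ by $h + ct\ell_{j+1}$ on the $j$th piece, and the shift $P_j \mapsto P_{j+1}$ is forgotten. The $j$th piece is centered in degree $j + n/2$, not $N/2$ with $N = n + r - 1$, so the pieces are not arranged symmetrically and hard Lefschetz on them uses the wrong exponents. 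Concretely, for $r \ge 2$ the graded map of $L^N \colon B^0 \to B^N$ is $A^0 \to A^N = 0$, yet $L^N$ must be an isomorphism. All of the Lefschetz information lives in the extension data that $\operatorname{gr}_F$ discards. Your fallback, writing $B_t = D/\operatorname{ann}(\ell)$ for an unspecified $D$, is not an argument.

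The missing idea is an induction on $n$ and $r$ that splits $L$ into two summands, each cutting $B$ down to a smaller ring of the same type. Write $h + \zeta = (\zeta - \ell_r) + (h + \ell_r)$.
\begin{itemize}
\item $B/\operatorname{ann}(\zeta - \ell_r) \cong A[\zeta]/((\zeta - \ell_1)\dotsb(\zeta - \ell_{r-1}))$ has the K\"ahler package by induction on $r$.
\item $B/\operatorname{ann}(h + \ell_r)$ is the analogous ring over $A/\operatorname{ann}(h + \ell_r)$. That base has the K\"ahler package by Proposition~\ref{prop:descent} and has top degree $n-1$, so this quotient is covered by induction on $n$.
\end{itemize}
Now let $b \in B^i$ with $i < N/2$ satisfy $(h+\zeta)^{N-2i} b = 0$. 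Then $b$ is primitive in both quotients, so
\[
(-1)^i\deg_B\bigl(b^2 (h+\zeta)^{N-1-2i}(\zeta - \ell_r)\bigr) \ge 0, \qquad (-1)^i\deg_B\bigl(b^2 (h+\zeta)^{N-1-2i}(h + \ell_r)\bigr) \ge 0,
\]
with equality if and only if $(\zeta - \ell_r)b = 0$, resp.\ $(h + \ell_r)b = 0$. The two quantities sum to $(-1)^i\deg_B(b^2(h+\zeta)^{N-2i}) = 0$, so both annihilations hold.

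Finally, $h + \ell_r$ acts componentwise on $B = \bigoplus_j \zeta^j A$ and is injective on $A^s$ for $s < n/2$. Hence every nonzero component $b_j \in A^{i-j}$ of $b$ has $j \le i - n/2 < (r-1)/2$. If $j_0$ is the largest such index, then $j_0 + 1 \le r-1$, and the $\zeta^{j_0+1}A$-component of $(\zeta - \ell_r)b$ is $b_{j_0} \neq 0$. This contradicts $(\zeta - \ell_r)b = 0$, so $b = 0$.
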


\begin{proof}
We prove the result by induction on $n$ and then by induction on $r$. The case when $n=0$ or when $r=0$ is trivial. By Proposition~\ref{prop:somecone}, we know that there is some open subcone $\mathcal{K}$ of $\mathcal{K}_B$ on which the K\"{a}hler package holds. It therefore suffices to show that the hard Lefschetz property holds for any element of the form $h + \zeta$, where $h \in \mathcal{K}_A$. 

Choose some $i < (n + r - 1)/2$ and some $b$ in the degree $i$ part of $B$ which is in the kernel of multiplication by $(\zeta + h)^{n + r - 1 - 2i}$. Note that $\zeta + h = (\zeta - \ell_r) + (h + \ell_r)$. We have
$$B/\operatorname{ann}(\zeta - \ell_r) \cong \frac{A[\zeta]}{((\zeta - \ell_1)\dotsb (\zeta - \ell_{r-1}))}.$$ 
By induction on $r$, the right-hand side satisfies the K\"{a}hler package with respect to 
the image of $\K_B$. 
Furthermore, the image of $b$ in this quotient is killed by $(\zeta + h)^{n + r - 1 - 2i}$. The Hodge--Riemann relations then imply that 
$$(-1)^i \deg_B(b^2 (\zeta + h)^{n + r - 2 - 2i}(\zeta - \ell_r)) \ge 0,$$
with equality if and only if $(\zeta - \ell_r) b = 0$. We also have
$$B/ \operatorname{ann}(h + \ell_r) \cong \frac{A/\operatorname{ann}(h + \ell_r)[\zeta]}{((\zeta - \ell_1)\dotsb (\zeta - \ell_{r}))}.$$
By induction on $n$ and Proposition~\ref{prop:descent}, the right-hand side satisfies the K\"{a}hler package with respect to the image of 
$\K_B$.
The Hodge--Riemann relations then imply that 
$$(-1)^i \deg_B(b^2 (\zeta + h)^{n + r - 2 - 2i}(h + \ell_r)) \ge 0,$$ 
with equality if and only if $(h + \ell_r) b = 0$. As 
$$\deg_B(b^2 (\zeta + h)^{n + r - 2 - 2i}(\zeta - \ell_r)) + \deg_B(b^2 (\zeta + h)^{n + r - 2 - 2i}(h + \ell_r)) = \deg_B(b^2 (\zeta + h)^{n + r - 1 - 2i}) = 0,$$
we see that $b (\zeta - \ell_r) = b(h + \ell_r) = 0$.  As $h + \ell_r$ satisfies the hard Lefschetz property on $A$, we see that multiplication by $h + \ell_r$ induces an injection from $A^s$ to $A^{s+1}$ for $s < n/2$. Writing $b$ in terms of the decomposition \eqref{eq:Bdirectsum}, the fact that $b(h + \ell_r) = 0$ implies that the $\zeta^j A^{i - j}$ component of $b$ is $0$ unless $j \le i - n/2$.  But if $b$ is nonzero, this contradicts that $b (\zeta - \ell_r) = 0$ as $i - n/2 < (r-1)/2$. This implies that $b = 0$, so $\zeta + h$ has the hard Lefschetz property, as desired. 
\end{proof}

\section{Positivity of Schur classes}\label{sec:positivity}

In this section, we prove Theorem~\ref{thm:main}. We continue with the notation from the previous sections.

\subsection{Proof of Theorem~\ref{thm:main}}

We assume the notation from Section~\ref{sec:symmetric}. 
Recall from Section~\ref{sec:projbund} that we have a ring homomorphism $$\Psi \colon \Lambda = \Z[e_1,e_2,\ldots] \to A$$ that we use to view Schur functions as elements in $A$. Recall that we often implicitly view elements of $\Lambda$ as elements of $A$ via $\Psi$. 
 We can then restate Theorem~\ref{thm:main} as follows. 

\begin{theorem}\label{thm:mainmod}
Let $A = A^0 \oplus \dotsb \oplus A^n$ be an algebra with the K\"{a}hler package with respect to a cone $\mathcal{K}_A$. For some $r \ge 1$, choose classes $c_i \in A^i$ for $i = 1, \dotsc, r$. Let $B = A[\zeta]/(\zeta^r - c_1 \zeta^{r-1} + \dotsb + (-1)^r c_r)$. Suppose that $B$ has the K\"{a}hler package with respect to $\mathcal{K}_B$. 
	Let $0 \le k \le n/2$ be a nonnegative integer. 
	Let $\lambda$ be a partition with $|\lambda| = n - 2k$, and 
	consider an element $a \in A^k$. Fix $h \in \K_A$ and consider  the rectangular decomposition of $\lambda$ with corresponding rectangles $R_1,\ldots,R_t$ as defined in \eqref{eq:rectangledef}.
	Suppose that 
	\begin{enumerate}
		\item\label{i:primitiveinsidemod} For any partition $\mu$ with $\lambda \subseteq \mu  \subseteq \lambda \cup R_1 \cup \cdots \cup R_t$ 
		\begin{equation*}
			a  \left( h s_\mu +   L_\lambda(s_\mu) \right) = 0 \text{ in }A, \textrm{ and},		
		\end{equation*}
		
		\item\label{i:primitiveoutsidemod} 
		 $a \cdot E^{(\le m)}_\lambda = 0 \text{ in }A$. 
	\end{enumerate}
	Then 
	\[
	(-1)^k\deg_A(a^2 s_\lambda) \ge 0,
	\]
	with equality if and only if 
	$a s_\mu = 0 \in A$ for all partitions $\mu$ with $\lambda \subseteq \mu$. 	
\end{theorem}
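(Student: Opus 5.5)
We argue by induction on the number $t$ of blocks in the rectangular decomposition of $\lambda$, following the sketch in the introduction. In the base case $t=0$ we have $\lambda=\emptyset$, $n=2k$ and $L_\lambda=0$. Hypothesis~\eqref{i:primitiveinsidemod} with $\mu=\emptyset$ then gives $ah=0$, so $a$ is primitive. The Hodge--Riemann relations for $A$ in degree $n/2$ give $(-1)^k\deg_A(a^2)\ge 0$, with equality iff $a=0$. Since $a\in A^k$ with $2k=n$, we have $as_\mu=0$ for all $\mu\ne\emptyset$ for degree reasons, so the equality case matches.

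For the inductive step, set $m=\ell(\lambda)=m_1$ and let $C$ be the $m$-fold multiprojective bundle ring. Write $\lambda=\widehat\lambda_1+\overline\lambda$, where $\widehat\lambda_1=(k_1^m)$. By Example~\ref{ex:rectangle}, $s_\lambda(\zeta)=(\zeta_1\cdots\zeta_m)^{k_1}s_{\overline\lambda}(\zeta)$. Proposition~\ref{prop:pushforward} together with adjointness then gives
\[
(-1)^{\binom m2}m!\,\deg_A(a^2s_\lambda)=\deg_C\big(a^2\Delta^2(\zeta_1\cdots\zeta_m)^{N}s_{\overline\lambda}(\zeta)\big),\qquad N=r-m+k_1 .
\]
Let $D=C/\operatorname{ann}((\zeta_1\cdots\zeta_m)^N)$. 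By Theorem~\ref{thm:multiproj} and Proposition~\ref{prop:descent} (each $\zeta_i\in\overline{\mathcal K}_C$), $D$ has the K\"ahler package with respect to the image of $\mathcal K_C$. Its top degree is $n'=n+m(r-1)-mN$, and $a\Delta$ has degree $k'=k+\binom m2$. Checking $|\overline\lambda|=n'-2k'$ reduces to $m(r-1)-mN-m(m-1)=-mk_1$, which holds. Define $B'=D[\xi]/((\xi-\zeta_1)\cdots(\xi-\zeta_m))$; by Proposition~\ref{prop:nefdirectsum} it has the K\"ahler package with respect to $\mathcal K_{B'}$. Its Schur classes are $s_\nu(\zeta_1,\dots,\zeta_m)$, which vanish for $\ell(\nu)>m$. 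The sign is $(-1)^{k'}=(-1)^k(-1)^{\binom m2}$, which matches the left side. So the inductive hypothesis, applied to $(D,B',\overline\lambda,a\Delta,h')$, gives the inequality once the hypotheses are verified. Here $h'=h+\epsilon(\zeta_1+\dots+\zeta_m)$ or simply the image of $h$; a point to settle is which element plays the role of $h$, and I expect the answer to be $h+\sum\zeta_i$ restricted suitably. The partition $\overline\lambda$ has $t-1$ blocks, because the removal affects only the first block.

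The main obstacle is transferring hypotheses~\eqref{i:primitiveinsidemod} and~\eqref{i:primitiveoutsidemod} from $(A,\lambda,a)$ to $(D,\overline\lambda,a\Delta)$. By Lemma~\ref{lem:detectzeroupstairs}, an element $a\Delta\,(\zeta_1\cdots\zeta_m)^N f(\zeta)$ with $f\in\Lambda^{(\le m)}$ vanishes in $C$, equivalently $a\Delta f$ vanishes in $D$, iff $a\,(s_{\widehat\lambda_1}\star_m f\star_m s_\nu)=0$ in $A$ for all $\nu$ with $\ell(\nu)\le m$. Multiplication by $s_{\widehat\lambda_1}$ is the shift $\mu\mapsto\mu+\widehat\lambda_1$. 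It carries partitions containing $\overline\lambda$ (of length at most $m$) onto partitions containing $\lambda$ of length $m$. It carries the rectangles $\overline R_i$ of $\overline\lambda$ onto the rectangles $R_{i+1}$ of $\lambda$, for $i\ge 1$, but only after a relabelling: the rectangles of $\overline\lambda$ include one with infinitely many rows below, which must be reconciled with $R_1$. One must also check that $L_{\overline\lambda}$ corresponds to $L_\lambda$ up to a shift of the weights $i\mapsto i+1$. That shift is exactly a multiple of $\sum_{\text{all corners}}$, which is what absorbs the $\zeta_1+\dots+\zeta_m$ term in $h'$ via Pieri.

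Then Lemma~\ref{lem:commute} says $\widetilde L_\lambda$ commutes with $\star_m s_\nu$ modulo $E^{(\le m)}_\lambda$, which $a$ kills by~\eqref{i:primitiveoutsidemod}. Since $\Lambda_\lambda^{(\le m)}$ is generated as an ideal by the relevant $s_\mu$ (Lemma~\ref{lem:idealgenerated}), the vanishing of $a(hs_\mu+L_\lambda s_\mu)$ for $\mu$ in the allowed range propagates to $a\,(h+L_\lambda)(s_\mu\star_m s_\nu)=0$. This yields both hypotheses upstairs. For the equality case, the inductive conclusion gives $a\Delta s_{\overline\mu}(\zeta)=0$ in $D$ for all $\overline\mu\supseteq\overline\lambda$. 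Translating back through Lemma~\ref{lem:detectzeroupstairs} gives $as_\mu=0$ for all $\mu\supseteq\lambda$ with $\ell(\mu)\le m$, and hence for all $\mu\supseteq\lambda$ by Lemma~\ref{lem:idealgenerated}, since $\Psi$ is a ring map on $\Lambda$. The converse direction of the equality case is immediate.
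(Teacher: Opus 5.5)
Your proposal is essentially the paper's proof. It has the same structure: induction on $t$, passage to $D=C/\operatorname{ann}((\zeta_1\cdots\zeta_m)^{r-m+k_1})$ and $D[\xi]/((\xi-\zeta_1)\cdots(\xi-\zeta_m))$, transfer of the hypotheses via Lemma~\ref{lem:detectzeroupstairs}, Pieri's rule and Lemma~\ref{lem:commute}, and the equality case via Lemma~\ref{lem:idealgenerated}.

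The remaining loose ends close as follows.

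\begin{itemize}
\item The class $h'$ must be exactly the image of $h+\zeta_1+\cdots+\zeta_m$. The image of $h$ alone need not lie in the cone, and a coefficient other than $1$ on $\zeta_1+\cdots+\zeta_m$ gives the wrong weights.
\item After shifting by $k_1$ columns, the rectangles of $\overline\lambda$ are exactly $R_2,\dots,R_t$; there is no rectangle with infinitely many rows to reconcile with $R_1$. So the weights $i-1$ of $L_{\overline\lambda}$ become the weights $i$ of $L_\lambda$. The boxes of $R_1$ acquire weight $1$ purely from $s_1(\zeta)s_\mu(\zeta)$, and the remaining Pieri terms land in $E^{(\le m)}_\lambda$.
\item You must first dispose of the case $\ell(\lambda)>r$, where $s_\mu=0$ for all $\mu\supseteq\lambda$, because Proposition~\ref{prop:pushforward} requires $m\le r$.
\end{itemize}
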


Let $m = \ell(\lambda)$. If $\ell(\lambda) > r$, then
$s_\mu = 0$ for all $\mu \supseteq \lambda$,
 so we may assume that $\ell(\lambda) \le r$.  Let $C$ be the $m$-fold multiprojective bundle ring. For example, if $\lambda = \emptyset$, then $m = 0$ and $C = A$. 
By Theorem~\ref{thm:multiproj}, $C$ has the K\"{a}hler package with respect to $\mathcal{K}_C$. 
Recall that  $\lambda$ has a rectangular decomposition
$\lambda=\widehat{\lambda}_1+ \cdots +\widehat{\lambda}_t$  with $\ell(\widehat{\lambda}_i) = m_i$.
Let $\overline{\lambda} = \widehat{\lambda}_2 + \cdots + \widehat{\lambda}_t$. For example, if $t = 1$, then $\overline{\lambda} =  \emptyset$. 
We will argue by induction on $t$, the number of parts in the rectangular decomposition of $\lambda$. Observe that when $t = 0$, $\lambda = \emptyset$, $k = n/2$, and Theorem~\ref{thm:mainmod} states that if $a h = 0$, then $(-1)^{n/2} \deg_A(a^2) \ge 0$, with equality if and only if $a = 0$. This follows because $A$ has the K\"ahler package with respect to $\mathcal{K}_A$. Assume that $t > 0$, i.e., we assume that $\lambda$ is nonempty. 

Recall that $\Delta = \prod_{1 \le i < j \le m} (\zeta_i - \zeta_j)$ is the Vandermonde determinant in $C^{\binom{m}{2}}$.
We may assume by induction that Theorem~\ref{thm:mainmod} holds for $\overline{\lambda}$. Let $k_1 = |\widehat{\lambda}_1|/m_1$ be the number of columns in $\widehat{\lambda}_1$. In what follows, we view the Young diagram of $\overline{\lambda}$ as the subdiagram of the Young diagram of $\lambda$ consisting of $\widehat{\lambda}_2,\ldots,\widehat{\lambda}_t$, or, equivalently, as the usual Young diagram of $\overline{\lambda}$ shifted to the right by $k_1$ boxes. In this way, $R_2,\ldots,R_t$ are the rectangles associated to the rectangular decomposition of $\overline{\lambda}$. If $\overline{\lambda} \subseteq \overline{\mu}$ for some partition $\overline{\mu}$, then we view the Young diagram of $\overline{\mu}$ to be the usual Young diagram shifted to the right by  $k_1$ boxes, so that it naturally contains the Young diagram of $\overline{\lambda}$. 
By Proposition~\ref{prop:pushforward} and Example~\ref{ex:rectangle}, for any $a, b \in A$, we have
\begin{align*}
 (-1)^{m(m-1)/2}m! \deg_A(ab s_\lambda) &= \deg_C( (a\Delta)(b \Delta) (\zeta_1 \cdots \zeta_m)^{r - m} s_\lambda(\zeta_1,\ldots,\zeta_m))	
\\
 &= \deg_C( (a\Delta)(b \Delta) (\zeta_1 \cdots \zeta_m)^{r - m}
 s_{\widehat{\lambda}_1}(\zeta_1,\ldots,\zeta_m) s_{\overline{\lambda}}(\zeta_1,\ldots,\zeta_m))	
\\
&= \deg_C( (a\Delta)(b \Delta) (\zeta_1 \cdots \zeta_m)^{r - m + k_1}s_{\overline{\lambda}}(\zeta_1,\ldots,\zeta_m))	
\\
&= \deg_{D}( (a\Delta)(b \Delta)s_{\overline{\lambda}}(\zeta_1,\ldots,\zeta_m)),	
\end{align*}
where $D = C/\ann((\zeta_1 \cdots \zeta_m)^{r - m + k_1})$. Since each $\zeta_i$ lies in $\overline{\mathcal{K}}_C$ and $C$ has the K\"{a}hler package with respect to $\mathcal{K}_C$, Proposition~\ref{prop:descent} implies that $D$ has the K\"ahler package with respect to the image $\K_{D}$ of $\K_C$. 
With a slight abuse of notation, let $a,h,\zeta_i$ denote the images of $a,h,\zeta_i$ respectively in $D$. Because each $\zeta_i$ is contained in $\overline{\mathcal{K}}_{D}$, Proposition~\ref{prop:nefdirectsum} implies that 
$D[y]/((y - \zeta_1) \cdots (y - \zeta_m))$ has the  K\"ahler package with respect to $\K_{D} + \R_{> 0} y$. 
We have $h + \zeta_1 + \cdots + \zeta_m = h + s_1(\zeta_1,\ldots,\zeta_m) \in \K_C$. By induction, we may apply Theorem~\ref{thm:mainmod}, replacing $A,B,$ and $\lambda$ with  $D$, $D[y]/((y - \zeta_1) \cdots (y - \zeta_m))$, and $\overline{\lambda}$, respectively. 

Suppose that 
 for any partition $\overline{\mu}$ with $\overline{\lambda} \subseteq \overline{\mu}$ and $m_2 = \ell(\overline{\lambda}) = \ell(\overline{\mu})$ either
\begin{enumerate}[label=(\Alph*),ref=\Alph*]
	
	\item\label{i:primitiveinsideproof} $\overline{\mu} \subseteq \overline{\lambda} \cup R_2 \cup \cdots \cup R_t$  and 
	\begin{equation*}
		a \Delta  \left( (h + s_1(\zeta_1,\ldots,\zeta_m)) s_{\overline{\mu}}(\zeta_1,\ldots,\zeta_m) +   \sum_{i = 2}^t (i - 1) \sum_{ \substack{ \overline{\nu} \supseteq \overline{\mu} \\ |\overline{\nu}| = |\overline{\mu}| + 1 \\ \overline{\nu} \smallsetminus \overline{\mu} \in R_i}} s_{\overline{\nu}}(\zeta_1,\ldots,\zeta_m) \right) = 0 \text{ in } D, \textrm{ or},
	\end{equation*}
	
	\item\label{i:primitiveoutsideproof} $\overline{\mu} \nsubseteq \overline{\lambda} \cup R_2 \cup \cdots \cup R_t$ and
	\[
	a \Delta s_{\overline{\mu}}(\zeta_1,\ldots,\zeta_m) = 0 \text{ in } D. 
	\] 
\end{enumerate}
For example,  when $t = 1$, we have $\overline{\lambda} = \overline{\mu} = \emptyset$,  \eqref{i:primitiveinsideproof} is the condition that
$a \Delta (h + s_1(\zeta_1,\ldots,\zeta_m)) = 0$ in $D$, and \eqref{i:primitiveoutsideproof} is vacuous. 
Then, by induction, 
\[
(-1)^{k + m(m - 1)/2}\deg_{D}((a \Delta)^2 s_{\overline{\lambda}}(\zeta_1,\ldots,\zeta_m)) \ge 0,
\]
with equality if and only if 
$a \Delta s_{\overline{\mu}} = 0$ in $D$ for all partitions $\overline{\mu}$ with $\overline{\lambda} \subseteq \overline{\mu}$.
In particular, we deduce that
\begin{equation*}
 (-1)^k\deg_A(a^2 s_\lambda) 
= 	(-1)^k (-1)^{m(m-1)/2}(1/m!)\deg_{D}( (a\Delta)^2 s_{\overline{\lambda}}(\zeta_1,\ldots,\zeta_m)) \ge 0.
\end{equation*}

Therefore, to prove that $(-1)^k\deg_A(a^2 s_\lambda) \ge 0$, we need to show that conditions \eqref{i:primitiveinsidemod} and \eqref{i:primitiveoutsidemod} in the statement of Theorem~\ref{thm:mainmod} imply that  conditions \eqref{i:primitiveinsideproof} and \eqref{i:primitiveoutsideproof} above hold.  

Consider a partition $\overline{\mu}$ with $\overline{\lambda} \subseteq \overline{\mu}$ and $m_2 = \ell(\overline{\lambda}) = \ell(\overline{\mu})$. 
Let $\mu = \widehat{\lambda}_1 + \overline{\mu}$. Then $\lambda = \widehat{\lambda}_1 + \overline{\lambda} \subseteq \widehat{\lambda}_1 + \overline{\mu} = \mu$ and $\ell(\mu) = \ell(\lambda) = m$. 
First assume that  $\overline{\mu} \subseteq \overline{\lambda} \cup R_2 \cup \cdots \cup R_t$. We need to show that condition \eqref{i:primitiveinsideproof} above holds. That is, we need the following equality in $C$:
\[
a \Delta (\zeta_1 \cdots \zeta_m)^{r - m + k_1} \left( (h + s_1(\zeta_1,\ldots,\zeta_m)) s_{\overline{\mu}}(\zeta_1,\ldots,\zeta_m) +   \sum_{i = 2}^t (i - 1) \sum_{ \substack{ \overline{\nu} \supseteq \overline{\mu}  \\ |\overline{\nu}| = |\overline{\mu}| + 1 \\ \overline{\nu} \smallsetminus \overline{\mu} \in R_i}} s_{\overline{\nu}}(\zeta_1,\ldots,\zeta_m) \right) = 0. 
\]
Equivalently, since $s_{\widehat{\lambda}_1}(\zeta_1,\ldots,\zeta_m) = (\zeta_1 \cdots \zeta_m)^{k_1}$ and using Example~\ref{ex:rectangle}, this simplifies to
\[
a \Delta (\zeta_1 \cdots \zeta_m)^{r - m} \left( (h + s_1(\zeta_1,\ldots,\zeta_m)) s_{\mu}(\zeta_1,\ldots,\zeta_m) +   \sum_{i = 2}^t (i - 1) \sum_{ \substack{ \nu \supseteq \mu \\ |\nu| = |\mu| + 1 \\ \nu \smallsetminus \mu \in R_i}} s_{\nu}(\zeta_1,\ldots,\zeta_m) \right) = 0.
\]
Using Pieri's rule, we compute $$s_1(\zeta_1,\ldots,\zeta_m)s_{\mu}(\zeta_1,\ldots,\zeta_m) = (s_1 \star_m s_{\mu})(\zeta_1,\ldots,\zeta_m) = \sum_{ \substack{ \nu \supseteq \mu \\ |\nu| = |\mu| + 1 \\ \ell(\nu) = m }} s_{\nu}(\zeta_1,\ldots,\zeta_m) \text{ in } C.$$ 
Substituting this into the above equation gives:
\begin{equation*}
a \Delta (\zeta_1 \cdots \zeta_m)^{r - m} \left( h s_{\mu}(\zeta_1,\ldots,\zeta_m) +   \sum_{i = 1}^t i \sum_{ \substack{ \nu \supseteq \mu \\ |\nu| = |\mu| + 1 \\ \nu \smallsetminus \mu \in R_i}} s_{\nu}(\zeta_1,\ldots,\zeta_m) + \sum_{ \substack{ \nu \supseteq \mu \\ |\nu| = |\mu| + 1 \\ \ell(\nu) = m \\ \nu \smallsetminus \mu \nsubseteq R_1 \cup \cdots \cup R_t }} s_\nu(\zeta_1,\ldots,\zeta_m) \right) = 0.
\end{equation*}
By Lemma~\ref{lem:detectzeroupstairs}, 
this is equivalent to 
\begin{equation*}
a  \left( h
s_{\mu} +   L_\lambda(s_\mu) + r_\mu \right) \star_m s_{\nu'}  = 0 \text{ in } A,
\end{equation*}
for all partitions $\nu'$  with $\ell(\nu') \le m$, where 
$$r_\mu = \sum_{ \substack{ \nu \supseteq \mu \\ |\nu| = |\mu| + 1 \\ \ell(\nu) = m \\ \nu \smallsetminus \mu \nsubseteq R_1 \cup \cdots \cup R_t }} s_\nu \in E^{(\le m)}_\lambda.$$
Recall that condition \eqref{i:primitiveoutsidemod} of Theorem~\ref{thm:mainmod} states that $a$ kills the ideal $E^{(\le m)}_\lambda$ in $\Lambda^{(\le m)}$. By Lemma~\ref{lem:commute}, we need to show that 
$$
a  \left( h
(s_{\mu} \star_m s_{\nu'}) +   L_\lambda(s_\mu \star_m s_{\nu'}) \right)   = 0 \text{ in } A,
$$ 
for all partitions $\nu'$  with $\ell(\nu') \le m$. By \eqref{i:primitiveinsidemod} and \eqref{i:primitiveoutsidemod} of Theorem~\ref{thm:mainmod}, for any partition $\mu'$ with $\lambda \subseteq \mu'$  and $\ell(\mu') = m$, $a  \left( h s_{\mu'} +   L_\lambda(s_{\mu'}) \right) = 0 \text{ in } A$, and the above equality follows.

Next assume that  $\overline{\mu} \nsubseteq \overline{\lambda} \cup R_2 \cup \cdots \cup R_t$. Recall that $\mu = \widehat{\lambda}_1 + \overline{\mu}$. Then $\mu \nsubseteq \lambda \cup R_1 \cup R_2 \cup \cdots \cup R_t$. We need to show that condition \eqref{i:primitiveoutsideproof} above holds. That is,
\[
a \Delta (\zeta_1 \cdots \zeta_m)^{r - m + k_1} s_{\overline{\mu}}(\zeta_1,\ldots,\zeta_m) = 0 \text{ in } C. 
\] 
Equivalently, since $s_{\widehat{\lambda}_1}(\zeta_1,\ldots,\zeta_m) = (\zeta_1 \cdots \zeta_m)^{k_1}$ and using Example~\ref{ex:rectangle}, this simplifies to
\begin{equation*}
	a \Delta (\zeta_1 \cdots \zeta_m)^{r - m} s_{\mu}(\zeta_1,\ldots,\zeta_m) = 0 \text{ in } C. 
\end{equation*} 
By Lemma~\ref{lem:detectzeroupstairs}, 
this is equivalent to 
\begin{equation*}
	a  (s_{\mu} \star_m s_{\nu}) = 0  \text{ in } A,
\end{equation*}
for all partitions $\nu$  with $\ell(\nu) \le m$. Then $s_{\mu}$ lies in the ideal $E_\lambda^{(\le m)}$ and the equality follows from condition \eqref{i:primitiveoutsidemod} of Theorem~\ref{thm:mainmod}.

To summarize, we have shown that conditions \eqref{i:primitiveinsidemod} and \eqref{i:primitiveoutsidemod} in Theorem~\ref{thm:mainmod} imply that conditions \eqref{i:primitiveinsideproof} and \eqref{i:primitiveoutsideproof} above hold, and the latter conditions imply that $(-1)^k\deg_A(a^2 s_\lambda) \ge 0$ with 
equality if and only if 
$a \Delta s_{\overline{\mu}} = 0$ in $D$ for all partitions $\overline{\mu}$ with $\overline{\lambda} \subseteq \overline{\mu}$.

It remains to consider the equality condition. 
Suppose that 	$a s_\mu = 0$ in $A$ for all partitions $\mu$ with $\lambda \subseteq \mu$. Then conditions \eqref{i:primitiveinsidemod} and \eqref{i:primitiveoutsidemod} in Theorem~\ref{thm:mainmod} are satisfied. Setting $\mu = \lambda$ implies that $a s_\lambda = 0$, and so $\deg_A(a^2 s_\lambda) = 0$. Conversely, suppose that conditions \eqref{i:primitiveinsidemod} and \eqref{i:primitiveoutsidemod} in Theorem~\ref{thm:mainmod} are satisfied and $\deg_A(a^2 s_\lambda) = 0$. 
Then 
 $a \Delta s_{\overline{\mu}} = 0 \text{ in } D$ for all partitions $\overline{\mu}$ with $\overline{\lambda} \subseteq \overline{\mu}$. 
Equivalently, 
\[
	a \Delta (\zeta_1 \cdots \zeta_m)^{r - m} s_{\widehat{\lambda}_1 + \overline{\mu}}(\zeta_1,\ldots,\zeta_m) = 0 \text{ in } C. 
\]
for all $\overline{\mu}$ with $\overline{\lambda} \subseteq \overline{\mu}$  and  $\ell(\overline{\mu}) = m_2$. By Lemma~\ref{lem:detectzeroupstairs}, 
this is equivalent to 
\begin{equation}\label{eq:equalityconditionpusheddown}
	a  (s_{\widehat{\lambda}_1 + \overline{\mu}} \star_m s_{\nu}) 
	= a (s_{\widehat{\lambda}_1} \star_m (s_{\overline{\mu}} \star_m s_{\nu})) 
	= 0 \text{ in } A,
\end{equation}
for all $\overline{\mu}$ with $\overline{\lambda} \subseteq \overline{\mu}$  and  $\ell(\overline{\mu}) = m_2$, and for all partitions $\nu$  with $\ell(\nu) \le m$.

By Lemma~\ref{lem:idealgenerated}, the ideal $\Lambda_{\overline{\lambda}}^{(\le m)} \subseteq \Lambda^{(\le m)}$ is generated by $\Lambda_{\overline{\lambda}}^{(\le m_2)}$. Hence $\Lambda_{\overline{\lambda}}^{(\le m)}$ is generated as an abelian group by
$\{ s_{\overline{\mu}} \star_m s_{\nu} : \overline{\lambda} \subseteq \overline{\mu}, \, \ell(\overline{\mu}) = m_2, \, \ell(\nu) \le m \}$. 
Then \eqref{eq:equalityconditionpusheddown} implies that $a \cdot \Lambda_{\lambda}^{(\le m)} = 0$ in $A$. 
By Lemma~\ref{lem:idealgenerated},  the ideal 	$\Lambda_\lambda \subseteq \Lambda$ is generated by $\Lambda_\lambda^{(\le m)}$. It follows that 
$a \cdot \Lambda_\lambda = 0$. This completes the proof.

\subsection{Strict positivity}\label{sec:strict}

In this section, we prove a version of Theorem~\ref{thm:main} which holds when $B$ has the K\"{a}hler package with respect to $\mathcal{K}_B$ and $\zeta$ satisfies the hard Lefschetz property in $B$. For example, this holds when $A$ is the ring of real $(p, p)$ classes on a smooth complex projective variety and $B$ is the ring of real $(p, p)$ classes on the projectivization of an ample vector bundle. 

We can construct more examples by ``twisting.'' Given a vector bundle ${E}$ and a line bundle $\mathcal{L}$ on a variety $X$, the projectivization $\mathbb{P}_X({E} \otimes \mathcal{L})$ is isomorphic to the projectivization $\mathbb{P}_X({E})$, but the relative $\mathcal{O}(1)$ is replaced by the tensor product of the original relative $\mathcal{O}(1)$ with the pullback of $\mathcal{L}$. It will be convenient to consider an analogue of this operation for projective bundle rings. 
Given $\delta \in A^1$, for each $i$ set $c_i\langle \delta \rangle = \sum_{j=0}^{i} \binom{r - i + j}{j} c_{i-j} \delta^j$. There is an isomorphism of $A$-algebras
$$\frac{A[\zeta]}{(\zeta^r - c_1\langle \delta \rangle \zeta^{r-1} + \dotsb + (-1)^r c_r\langle \delta \rangle)} \to B, \quad \zeta \mapsto \zeta + \delta.$$
 Suppose that $B$ satisfies the K\"{a}hler package with respect to $\mathcal{K}_B$ and choose some $h \in \mathcal{K}_A$. Then $\zeta + h$ satisfies the hard Lefschetz property in $B$. 
In particular, the projective bundle ring defined using $c_1\langle h \rangle, \dotsc, c_r\langle h \rangle$ has the K\"{a}hler package with respect to
 positive linear combinations of the images of elements of $\mathcal{K}_A$ and $\zeta$,  
 and $\zeta$ satisfies the hard Lefschetz property. 
Write $s_{\lambda}\langle \delta \rangle$ for the Schur classes defined using $c_1\langle \delta \rangle, \dotsc, c_r \langle \delta \rangle$. These Schur classes can be related to the Schur classes defined using $c_1, \dotsc, c_r$, as follows. For each partition $\lambda$ with at most $r$ parts, we can write
$$s_{\lambda}(x_1 + y, x_2+ y, \dotsc, x_r + y) = s_{\lambda}(x_1, \dotsc, x_r) + y s_{\lambda}^{(1)}(x_1, \dotsc, x_r) + y^2 s_{\lambda}^{(2)}(x_1, \dotsc, x_r) + \dotsb.$$
See \cite[Section 2.5]{RT1}. Each $s_{\lambda}^{(i)}$ is a symmetric polynomial, so we can write $s_{\lambda}^{(i)} = \sum_{|\mu| = |\lambda| - i} a_{\mu} s_{\mu}(x_1, \dotsc, x_r)$ for some integers $a_{\mu}$. Then we have
\begin{equation}\label{eq:twist}
s_{\lambda} \langle \delta \rangle = s_{\lambda} +  \delta\sum_{|\mu| = |\lambda| - 1 } a_{\mu} s_{\mu} +  \delta^2\sum_{|\mu| = |\lambda| - 2} a_{\mu} s_{\mu} + \dotsb.
\end{equation}
Note that the numbers $a_{\mu}$ depend on $r$ and not only on $\lambda$. Note also that $a_{\emptyset} > 0$; it is equal to $s_{\lambda}(1, \dotsc, 1)$. 
In \cite{CorteelKimBounded}, Corteel and Kim showed that each $a_{\mu}$ is nonnegative. This can also be seen using the converse to the fact that degrees of Schur classes of nef vector bundles are nonnegative: if $E$ is a nef vector bundle on $X$, then the vector bundle $E \boxtimes \mathcal{O}(1)$ on $X \times \mathbb{P}^i$ is nef. For a partition $\lambda$ with $|\lambda| = \dim X + i$, we have
$$\deg_{X \times \mathbb{P}^i}(s_{\lambda}(E \boxtimes \mathcal{O}(1))) = \deg_X(s_{\lambda}^{(i)}(E)).$$
As the left-hand side of the above equation is nonnegative for all nef vector bundles $E$, \cite[Theorem 1]{FultonLaz} implies that the expansion of $s_{\lambda}^{(i)}$ in terms of Schur classes is nonnegative.

\begin{proof}[Proof of Theorem~\ref{thm:strict}]
Choose some $h \in \mathcal{K}_A$. 
Because $\zeta$ satisfies the hard Lefschetz property, so does $\zeta - \varepsilon h$ for sufficiently small $\varepsilon > 0$. Proposition~\ref{prop:convex} implies that there is an open convex cone $\tilde{\mathcal{K}} \subseteq B^1$ containing $\mathcal{K}_B$ and $\zeta$ on which $B$ has the K\"{a}hler package. This implies that the ring 
$$B_{\varepsilon} = \frac{A[\zeta]}{(\zeta^r - c_1\langle - \varepsilon h \rangle \zeta^{r-1} + \dotsb + (-1)^r c_r\langle - \varepsilon h \rangle)}$$
has the K\"{a}hler package with respect to $\pi^*(\mathcal{K}_A) + \mathbb{R}_{>0} \zeta$. By Theorem~\ref{thm:main}, $\deg_A(s_{\lambda} \langle - \varepsilon h \rangle) \ge 0$. Also, Proposition~\ref{prop:descent} and Theorem~\ref{thm:main} imply that for any partition $\mu$ with $|\mu| \le n$, we have $\deg_A(s_{\mu}\langle -\varepsilon h \rangle h^{n - |\mu|}) \ge 0$. By \eqref{eq:twist} we have
$$s_{\lambda} = s_{\lambda} \langle - \varepsilon h \rangle + \varepsilon h \sum_{|\mu| = n-1} a_{\mu} s_{\mu} \langle - \varepsilon h \rangle + \dotsb + a_{\emptyset} \varepsilon^n h^n. $$
Applying $\deg_A$ to this equation and using that $\deg_A(h^n) > 0$ implies the result. 
\end{proof}

\section{Variants}\label{sec:variants}

In this section, we give several variants and consequences of the main results. As mentioned in the introduction, Theorem~\ref{thm:main} generalizes the results of Ross and Toma \cites{RT1,RT2,RT3} to a purely algebraic setting. In \cite[Section 10]{RT2}, Ross and Toma give a number of inequalities which follow from their results.  The proofs of these inequalities can all be adapted to the purely algebraic setting of Theorem~\ref{thm:main}. In particular, they hold in the setting of \cite{LarsonPartida}.

\subsection{Schur positivity for Lefschetz modules}\label{ss:Lefschetz}

We begin by describing a version of Theorem~\ref{thm:main} for modules. The motivation for considering this version is that the intersection cohomology of a complex projective variety satisfies a version of the K\"{a}hler package \cite{Saito88}. The intersection cohomology of a complex projective variety does not have a natural ring structure, but rather it is a module over the cohomology ring. Let $A$ be a finite-dimensional commutative graded $\mathbb{R}$-algebra, and let $M = M^0 \oplus \dotsb \oplus M^n$ be a finite-dimensional graded $A$-module. Let $\mathcal{K}_A$ be a nonempty open convex cone in $A^1$, and let $\mathcal{Q} \colon M \times M \to \mathbb{R}$ be a symmetric bilinear form on $M$ which respects the $A$-action, i.e.,
$$\mathcal{Q}(zx, y) = \mathcal{Q}(x, zy) \text{ for all }z \in A \text{ and }x, y \in M.$$
We say that $(M, \mathcal{Q})$ is a \emph{Lefschetz module} of degree $n$ over $(A, \mathcal{K}_A)$ if the following properties hold:
\begin{enumerate}
\item The pairing $M^i \times M^j \to \mathbb{R}$ induced by $\mathcal{Q}$ is nondegenerate if $i + j = n$, and it is $0$ otherwise. 
\item For all $h \in \mathcal{K}_A$ and $i < n/2$, multiplication by $h^{n - 2i}$ induces an isomorphism from $M^i$ to $M^{n - i}$. 
\item For all $h \in \mathcal{K}_A$ and $i \le n/2$, the symmetric bilinear form on $M^i$ given by $(x, y) \mapsto (-1)^i\mathcal{Q}(x, h^{n - 2i}y)$ is positive definite when restricted to the kernel of multiplication by $h^{n - 2i + 1}$. 
\end{enumerate}

See \cite{LefschetzModule}. Given classes $c_1, \dotsc, c_r$, with $c_i \in A^i$, define classes $s_1, s_2, \dotsc$ via the formula \eqref{eq:segre}. Set $s_0 = 1$ and $s_{j} = 0$ for $j < 0$. 
Let $B = A[\zeta]/(\zeta^r - c_1 \zeta^{r-1} + \dotsb + (-1)^r c_r)$ with natural inclusion $\pi^* \colon A \to B$, and consider the $B$-module $\tilde{M}$ which is given as a graded $A$-module by $M \oplus M[-1] \oplus \dotsb \oplus M[-r+1]$, where the action of $\zeta$ is given as follows. For $0 \le i \le r-2$, multiplication by $\zeta$ sends $M[-i]$ onto $M[-i - 1]$ via the natural map. We can therefore write each element of $M[-i]$ as $\zeta^i x$ for a unique choice of $x \in M$. The action of $\zeta$ on $M[-r+1]$ is given by 
$$\zeta \cdot (\zeta^{r-1} x) = \zeta^{r-1} c_1 x - \zeta^{r-2} c_2 x + \dotsb + (-1)^{r-1} c_r x.$$
When $M = A$, this construction recovers the projective bundle ring. Set $\mathcal{K}_B = \pi^*(\mathcal{K}_A) + \mathbb{R}_{>0} \zeta$. 
There is a bilinear form $\tilde{\mathcal{Q}}$ on $\tilde{M}$ given by 
$$\tilde{\mathcal{Q}}(\zeta^i x, \zeta^j y) = \mathcal{Q}(s_{i + j - r + 1}x, y)$$
for $x, y \in M$. The natural inclusion $\pi^* \colon M \to \tilde{M}$ is adjoint to the pushforward map $\pi_* \colon \tilde{M} \to M$, which sends $\zeta^{r-1} x$ to $x$ and $\zeta^j x$ to $0$ for all $x \in M$ and $j < r-1$. 
All of the arguments in the proof of Theorem~\ref{thm:main} can be adapted to prove the following theorem.

\begin{theorem}\label{thm:moduleversion}
Let $(M, \mathcal{Q})$ be a Lefschetz module of degree $n$ over $(A, \mathcal{K}_A)$. 
For some $r \ge 1$, choose classes $c_i \in A^i$ for $i = 1, \dotsc, r$. Let $B = A[\zeta]/(\zeta^r - c_1 \zeta^{r-1} + \dotsb + (-1)^r c_r)$, and let $\tilde{M}$ be the graded $B$-module described above. Suppose that $(\tilde{M}, \tilde{\mathcal{Q}})$ is a Lefschetz module over $(B, \mathcal{K}_B)$.  
Let $k \le n/2$ be a nonnegative integer. 
Let $\lambda$ be a partition with $|\lambda| = n - 2k$, and 
consider an element $m \in M^k$. Fix $h \in \K_A$ and consider  the rectangular decomposition of $\lambda$ with corresponding rectangles $R_1,\ldots,R_t$ as defined in \eqref{eq:rectangledef}.
Suppose that for any partition $\mu$ with $\lambda \subseteq \mu$ and $\ell(\lambda) = \ell(\mu)$, either
 \begin{enumerate}
 	
 	\item$\mu \subseteq \lambda \cup R_1 \cup \cdots \cup R_t$  and 
 	\begin{equation*}\label{eq:primitive}
 		 	 \left( h s_\mu +   \sum_{i = 1}^t i \sum_{ \substack{ \mu \subseteq \nu \\ |\nu| = |\mu| + 1 \\ \nu \smallsetminus \mu \in R_i}} s_\nu \right) m = 0, \textrm{ or},
 	\end{equation*}

 	\item$\mu \nsubseteq \lambda \cup R_1 \cup \cdots \cup R_t$ and  $s_\mu m = 0$. 
 \end{enumerate}
 Then 
 \[
 (-1)^k\mathcal{Q}(m, s_{\lambda} m) \ge 0,
 \]
 with equality if and only if $ s_\mu m = 0$ in $M$ for all partitions $\mu$ with $\lambda \subseteq \mu$. 
\end{theorem}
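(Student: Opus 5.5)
The plan is to run the proof of Theorem~\ref{thm:mainmod} verbatim, replacing every ring-theoretic ingredient by its module analogue. First, for the $m$-fold multiprojective bundle I build the module $\tilde{M}_m = M \otimes_A C$ by iterating the construction of $\tilde{M}$, with form $\tilde{\mathcal{Q}}_m(x,y) = \mathcal{Q}(x, p_*(y))$ extended via \eqref{eq:pushforwardmulti}. The purely algebraic statements transfer without change, since they only use the $C$-module structure and the adjunction of $p^*$ and $p_*$:
\begin{itemize}
\item Proposition~\ref{prop:pushforward}, read as an identity of operators on $M$;
\item Lemma~\ref{lem:antisymmetriczero} and Lemma~\ref{lem:detectzeroupstairs}, where an antisymmetric element $m\Delta(\zeta_1\cdots\zeta_m)^{r-m}s_\mu(\zeta)$ of $\tilde{M}_m$ vanishes iff $(s_\mu\star_m s_\nu)m=0$ in $M$ for all $\nu$ with $\ell(\nu)\le m$;
\item the symmetric function lemmas, Lemma~\ref{lem:idealgenerated} and Lemma~\ref{lem:commute}, which do not involve $A$ at all.
\end{itemize}

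Second, I need the positivity inputs for modules.

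Theorem~\ref{thm:multiproj} should go through for Lefschetz modules. Proposition~\ref{prop:civanishexample} has a module version: tensoring a Lefschetz module with $\mathbb{R}[z]/(z^r)$ again gives a Lefschetz module, by the same $\mathfrak{sl}_2$/Hodge–Riemann argument. The deformation Lemma~\ref{lem:deformationKahler} is formal. The Kashiwara–Kawai convexity, Proposition~\ref{prop:convex}, is stated for an arbitrary space with a symmetric form and a family of commuting operators, so it applies to $\tilde{M}_m$ with operators from $C^1$.

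The descent lemma, Proposition~\ref{prop:descent}, is available for Lefschetz modules in the cited \cite[Lemma 4.5]{LefschetzModule}. There $D$ is replaced by $\tilde{M}_m/\ker(\ell)$, with form $\mathcal{Q}(x,\ell y)$ and $\ell=(\zeta_1\cdots\zeta_m)^{r-m+k_1}$. Proposition~\ref{prop:nefdirectsum} adapts as well: its proof only uses descent, induction on $n$ and $r$, and hard Lefschetz on the base to locate components of $b$, all of which hold for modules.

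Third, I run the induction on the number $t$ of rectangles exactly as before.
\begin{itemize}
\item In the base case $t=0$, the condition $hm=0$ with $m\in M^{n/2}$ gives $(-1)^{n/2}\mathcal{Q}(m,m)\ge 0$, with equality iff $m=0$. This is axiom (3).
\item In the inductive step, the element $m\Delta$ sits in the descended module over $D$ with the twisted bundle $D[y]/\prod(y-\zeta_i)$. The translation of hypotheses (1) and (2) into (A) and (B) uses Pieri, Lemma~\ref{lem:commute} and the module Lemma~\ref{lem:detectzeroupstairs} verbatim.
\item The equality case uses Lemma~\ref{lem:idealgenerated} as before.
\end{itemize}

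The main obstacle is the module version of Theorem~\ref{thm:multiproj}, specifically confirming that Kashiwara–Kawai convexity and the deformation argument need no ring structure on $M$. I would check that the Hodge–Riemann locus is defined purely in terms of the operators $h\in C^1$ acting on $\tilde{M}_m$ and the form $\tilde{\mathcal{Q}}_m$. I would also check that the signature-constancy argument on connected components of the hard Lefschetz locus uses only this data, so that both results hold for $\tilde{M}_m$ as stated.
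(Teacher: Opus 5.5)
Your proposal is correct and follows the same route as the paper. The paper's entire justification is that every step of the proof of Theorem~\ref{thm:main} adapts to Lefschetz modules, and your outline is that adaptation: module versions of Proposition~\ref{prop:pushforward} and Lemmas~\ref{lem:antisymmetriczero} and~\ref{lem:detectzeroupstairs}, the multiprojective Lefschetz property via deformation plus Kashiwara--Kawai convexity, descent, the nef direct sum statement, and then the same induction on the number of rectangles. One small fix: as written, $\tilde{\mathcal{Q}}_m(x,y)=\mathcal{Q}(x,p_*(y))$ does not typecheck; the form on $M\otimes_A C$ should be $\tilde{\mathcal{Q}}_m(c\,x, c'\,y) = \mathcal{Q}(x, p_*(cc')\,y)$ for $c,c' \in C$ and $x,y \in M$.
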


For example, Theorem~\ref{thm:moduleversion} can be applied in the following setting. Let $X$ be a complex projective variety, and let $H$ be the subalgebra of $H^*(X; \mathbb{R})$ generated by the Chern classes of vector bundles. The work of Saito \cite{Saito88} endows the intersection cohomology $IH(X)$ with a pure Hodge structure, and shows that $\bigoplus_p IH^{p,p}(X)$, the module of real $(p,p)$ classes, is a Lefschetz module over $H$ with respect to the ample cone. If $E$ is a nef vector bundle on $X$, then the ring $B$ considered in Theorem~\ref{thm:moduleversion} is equal to the subalgebra of $H^*(\mathbb{P}_X(E); \mathbb{R})$ generated by the Chern classes of vector bundles, and the module $\tilde{M}$ is equal to the module of real $(p, p)$ classes in $IH(\mathbb{P}_X(E))$.  Because $E$ is nef, $\tilde{M}$ is a Lefschetz module over $(B, \mathcal{K}_B)$. 

The deduction of Theorem~\ref{thm:strict} from Theorem~\ref{thm:main} in the setting of Lefschetz modules also adapts to give the following generalization of Theorem~\ref{thm:strict}. 

\begin{theorem}\label{thm:modulestrict}
Let $(M, \mathcal{Q})$ be a Lefschetz module of degree $n$ over $(A, \mathcal{K}_A)$. 
For some $r \ge 1$, choose classes $c_i \in A^i$ for $i = 1, \dotsc, r$. Let $B = A[\zeta]/(\zeta^r - c_1 \zeta^{r-1} + \dotsb + (-1)^r c_r)$, and let $\tilde{M}$ be the graded $B$-module described above. Suppose that $(\tilde{M}, \tilde{\mathcal{Q}})$ is a Lefschetz module over $(B, \mathcal{K}_B)$, and that $\zeta$ satisfies the hard Lefschetz property in $\tilde{M}$.  
Then for any partition $\lambda$ with $|\lambda| = n$ and $\ell(\lambda) \le r$, the symmetric bilinear form on $M^0$ given by $(m_1, m_2) \mapsto \mathcal{Q}(m_1, s_{\lambda} m_2)$ is positive definite. 
\end{theorem}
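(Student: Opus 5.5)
We follow the same perturbation argument used to deduce Theorem~\ref{thm:strict} from Theorem~\ref{thm:main}, now carried out in the module setting. Theorem~\ref{thm:moduleversion} is the module analogue of Theorem~\ref{thm:main}, and we take it as given.

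\textbf{Step 1: perturbing $\zeta$.} Fix $h \in \mathcal{K}_A$. Because $\zeta$ has the hard Lefschetz property on $\tilde{M}$ and hard Lefschetz is an open condition, $\zeta - \varepsilon h$ also has it for all sufficiently small $\varepsilon > 0$. The segment from $\zeta - \varepsilon h$ to $\zeta$ and then into $\mathcal{K}_B$ stays inside the hard Lefschetz locus. Hence these classes lie in one connected component of the Hodge--Riemann locus of $\tilde M$.

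\textbf{Step 2: convexity.} Proposition~\ref{prop:convex} is stated for algebras, so it must be adapted to modules. The Kashiwara--Kawai argument only uses the bilinear form $\tilde{\mathcal{Q}}$ and the operators given by multiplication by degree-one elements, so it carries over verbatim. It follows that $\tilde{M}$ is a Lefschetz module over the enlarged cone $\mathcal{K}_B + \mathbb{R}_{>0}(\zeta - \varepsilon h)$. The twisting isomorphism $\zeta \mapsto \zeta - \varepsilon h$ applies to modules as well. It identifies this with the statement that the module $\tilde M_\varepsilon$ built from the classes $c_i\langle -\varepsilon h\rangle$ is a Lefschetz module over $(B_\varepsilon, \mathcal{K}_{B_\varepsilon})$.

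\textbf{Step 3: nonnegativity for all Schur classes.} Apply Theorem~\ref{thm:moduleversion} to $\tilde M_\varepsilon$ with $k=0$. All hypotheses hold automatically, because $M^{s} = 0$ for $s > n$. This gives
\[
\mathcal{Q}(m, s_\lambda\langle -\varepsilon h\rangle m) \ge 0 \quad \text{for all } m \in M^0.
\]
For $|\mu| < n$ we need the same inequality after inserting a power of $h$. By the module version of the descent lemma, $M/\operatorname{ann}_M(h^{n-|\mu|})$ with the form $\mathcal{Q}(x, h^{n-|\mu|} y)$ is again a Lefschetz module, and the same holds upstairs for $\tilde M_\varepsilon$ with $h$ pulled back. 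This yields
\[
\mathcal{Q}(m, h^{n-|\mu|} s_\mu\langle -\varepsilon h\rangle m) \ge 0 .
\]

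\textbf{Step 4: expanding $s_\lambda$.} Use \eqref{eq:twist} with $\delta = \varepsilon h$ applied to the classes $c_i\langle -\varepsilon h \rangle$, which undoes the twist:
\[
s_\lambda = s_\lambda\langle -\varepsilon h\rangle + \sum_{i \ge 1} \varepsilon^i h^i \sum_{|\mu| = n-i} a_\mu\, s_\mu\langle -\varepsilon h\rangle .
\]
The coefficients $a_\mu$ are nonnegative and $a_\emptyset = s_\lambda(1,\dots,1) > 0$, since $\ell(\lambda) \le r$. Pair both sides with $m \in M^0$. Every term is then nonnegative, and the last term is $a_\emptyset \varepsilon^n \mathcal{Q}(m, h^n m)$. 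This term is strictly positive for $m \neq 0$: the Hodge--Riemann relation in degree $0$ applies because $M^0$ is primitive, as $h^{n+1}$ kills $M^0$ for degree reasons. Summing gives $\mathcal{Q}(m, s_\lambda m) > 0$ for all $m \neq 0$, which is the required positive definiteness.

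\textbf{Main obstacle.} The hardest part is Steps 2 and 3: extending Kashiwara--Kawai convexity and the descent lemma to Lefschetz modules, and checking that the twisted module $\tilde{M}_\varepsilon$ is the module attached to $c_i\langle -\varepsilon h \rangle$ with the correct form $\tilde{\mathcal{Q}}$. For the latter, we would verify that $\pi_*$ is compatible with the change of variables.
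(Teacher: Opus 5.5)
Your proposal is correct and is essentially the paper's own argument. The paper proves this statement by transporting the proof of Theorem~\ref{thm:strict} to Lefschetz modules, which is exactly what you do: perturb $\zeta$ to $\zeta-\varepsilon h$ using openness of hard Lefschetz and Kashiwara--Kawai convexity, apply the $k=0$ module main theorem together with descent to get $\mathcal{Q}(m, h^{n-|\mu|}s_\mu\langle-\varepsilon h\rangle m)\ge 0$, and expand via \eqref{eq:twist} with the strictly positive term $a_\emptyset\varepsilon^n h^n$.

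The compatibility you flag as the main obstacle is immediate. We have $\tilde M\cong B\otimes_A M$ with $\tilde{\mathcal Q}(bx,b'y)=\mathcal Q(\pi_*(bb')x,y)$. Since $(\zeta-\varepsilon h)^j=\zeta^j+(\text{lower powers of }\zeta)$, the twist preserves the coefficient of $\zeta^{r-1}$, so $\pi_*$ commutes with the twist.
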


\subsection{Schur positivity for complex Lefschetz modules}\label{ss:complexLefschetz}

There is also a version of the main theorems for \emph{complex Lefschetz modules}. This is useful to handle intersection cohomology rings of varieties which have nontrivial Hodge structures. Let $A$ be a finite-dimensional commutative graded $\mathbb{R}$-algebra with a nonempty open convex cone $\mathcal{K}_A$ in $A^1$, and set $A_{\mathbb{C}} = A \otimes_{\mathbb{R}} \mathbb{C}$.  Let $M = M^0 \oplus \dotsb \oplus M^n$ be a finite-dimensional graded $A_{\mathbb{C}}$-module, and let $\mathcal{Q} \colon M \times M \to \mathbb{C}$ be a Hermitian form that respects the action of $A_{\mathbb{C}}$, i.e., 
$$\mathcal{Q}(zx, y) = \mathcal{Q}(x, \overline{z}y) \text{ for all }z \in A_{\mathbb{C}} \text{ and }x, y \in M.$$
We say that $(M, \mathcal{Q})$ is a complex Lefschetz module of degree $n$ over $(A, \mathcal{K}_A)$ if the following properties hold:
\begin{enumerate}
\item The pairing $M^i \times M^j \to \mathbb{C}$ induced by $\mathcal{Q}$ is nondegenerate if $i + j = n$, and it is $0$ otherwise. 
\item For all $h \in \mathcal{K}_A$ and $i < n/2$, multiplication by $h^{n - 2i}$ induces an isomorphism from $M^i$ to $M^{n - i}$. 
\item For all $h \in \mathcal{K}_A$ and $i \le n/2$, the Hermitian form on $M^i$ given by $(x, y) \mapsto (-1)^i\mathcal{Q}(x, h^{n - 2i}y)$ is positive definite when restricted to the kernel of multiplication by $h^{n - 2i + 1}$. 
\end{enumerate}
See \cite[Section 3.4.1]{LefschetzModule}. Given classes $c_1, \dotsc, c_r$, with $c_i \in A^i$, let $B = A[\zeta]/(\zeta^r - c_1 \zeta^{r-1} + \dotsb + (-1)^r c_r)$. We can define a $B_{\mathbb{C}}$-module $\tilde{M}$ and a Hermitian form $\tilde{\mathcal{Q}}$ on it which respects the action of $A_{\mathbb{C}}$ using the same formulas as 
in Section~\ref{ss:Lefschetz}. 
The proof of Theorem~\ref{thm:main} can be adapted to prove the following result. 
\begin{theorem}\label{thm:complexmodule}
Let $(M, \mathcal{Q})$ be a complex Lefschetz module of degree $n$ over $(A, \mathcal{K}_A)$. 
For some $r \ge 1$, choose classes $c_i \in A^i$ for $i = 1, \dotsc, r$. Let $B = A[\zeta]/(\zeta^r - c_1 \zeta^{r-1} + \dotsb + (-1)^r c_r)$, and let $\tilde{M}$ be the graded $B_{\mathbb{C}}$-module described above. Suppose that $(\tilde{M}, \tilde{\mathcal{Q}})$ is a complex Lefschetz module over $(B, \mathcal{K}_B)$.  
Let $k \le n/2$ be a nonnegative integer. 
Let $\lambda$ be a partition with $|\lambda| = n - 2k$, and 
consider an element $m \in M^k$. Fix $h \in \K_A$ and consider  the rectangular decomposition of $\lambda$ with corresponding rectangles $R_1,\ldots,R_t$ as defined in \eqref{eq:rectangledef}.
Suppose that for any partition $\mu$ with $\lambda \subseteq \mu$ and $\ell(\lambda) = \ell(\mu)$, either
 \begin{enumerate}
 	
 	\item$\mu \subseteq \lambda \cup R_1 \cup \cdots \cup R_t$  and 
 	\begin{equation*}\label{eq:primitive}
 		 	 \left( h s_\mu +   \sum_{i = 1}^t i \sum_{ \substack{ \mu \subseteq \nu \\ |\nu| = |\mu| + 1 \\ \nu \smallsetminus \mu \in R_i}} s_\nu \right) m = 0, \textrm{ or},
 	\end{equation*}

 	\item$\mu \nsubseteq \lambda \cup R_1 \cup \cdots \cup R_t$ and  $s_\mu m = 0$. 
 \end{enumerate}
 Then 
 \[
 (-1)^k\mathcal{Q}(m, s_{\lambda} m) \ge 0,
 \]
 with equality if and only if $ s_\mu m = 0$ in $M$ for all partitions $\mu$ with $\lambda \subseteq \mu$. 
\end{theorem}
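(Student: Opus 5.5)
The plan is to transport the proof of Theorem~\ref{thm:mainmod} verbatim to the Hermitian setting. We replace every algebra-with-degree-map by a pair (module, Hermitian form) and every use of the Kähler package by the complex Lefschetz module axioms. Concretely, we define the multiprojective module $\tilde{M}_m = M \otimes_A C$, with $C$ the $m$-fold multiprojective bundle ring and $m = \ell(\lambda)$. As a graded $A_{\mathbb{C}}$-module it is $\bigoplus \zeta_1^{i_1}\cdots\zeta_m^{i_m} M$, and we equip it with the Hermitian form
$$\tilde{\mathcal{Q}}(x, y) = \mathcal{Q}(p_*(x\overline{y})),$$
interpreted via \eqref{eq:pushforwardmulti}. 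Since $\Psi$ has real coefficients and each $\zeta_i$ is real, this form is Hermitian and respects the $C_{\mathbb{C}}$-action. Poincaré duality for it follows from the block-triangular argument of Proposition~\ref{prop:PDprojbun}.

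The analytic inputs must then be checked for complex Lefschetz modules. These are the analogues of Proposition~\ref{prop:convex} (Kashiwara--Kawai convexity), Proposition~\ref{prop:civanishexample}, Proposition~\ref{prop:descent} (descent), and Proposition~\ref{prop:nefdirectsum}. The descent lemma and the convexity statement are already formulated for complex (polarized) Lefschetz modules in \cite{CKS87,KK87,LefschetzModule}. The deformation argument of Lemma~\ref{lem:deformationKahler} only uses that the Hermitian Hodge--Riemann forms depend continuously (polynomially) on the parameters $\mu_i$, and that the signature condition characterizes Hodge--Riemann on the hard Lefschetz locus. Both hold because Hermitian forms have real signatures that are locally constant where they are nondegenerate. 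The tensor-product statement (Proposition~\ref{prop:civanishexample} for modules, i.e.\ $\tilde M \otimes \mathbb{R}[z]/(z^r)$) follows from the Hermitian version of the Künneth/Hodge--Riemann tensor product lemma. Applying these gives Theorem~\ref{thm:multiproj} for $\tilde{M}_m$, with the descent to $\tilde{M}_m/\ann$ described next.

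For the algebraic part, replace $\deg_A(ab s_\lambda)$ by $\mathcal{Q}(m, s_\lambda m')$. Proposition~\ref{prop:pushforward} then gives
$$(-1)^{\binom{m}{2}} m!\, \mathcal{Q}(m, s_\lambda m') = \tilde{\mathcal{Q}}\bigl(\Delta m, (\zeta_1\cdots\zeta_m)^{r-m}s_\lambda(\zeta)\Delta m'\bigr).$$
This uses that $\Delta$ is real, so conjugation does not affect it. Lemma~\ref{lem:antisymmetriczero} and Lemma~\ref{lem:detectzeroupstairs} hold with $a$ replaced by a module element, since their proofs only use nondegeneracy and $S_m$-equivariance of $p_*$. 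The quotient $D$ is replaced by the module $\tilde{M}_m / \ker((\zeta_1\cdots\zeta_m)^{r-m+k_1})$ with its induced form. The symmetric-function inputs, Lemma~\ref{lem:idealgenerated} and Lemma~\ref{lem:commute}, are purely about $\Lambda$ and carry over unchanged. The induction on $t$ then proceeds exactly as in the proof of Theorem~\ref{thm:mainmod}. The base case $t=0$ is the Hermitian Hodge--Riemann relation in $M^{n/2}$. The equality case likewise follows by pushing down via Lemma~\ref{lem:detectzeroupstairs}.

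The main obstacle I expect is the module analogue of Proposition~\ref{prop:nefdirectsum} together with the descent step. The proof of Proposition~\ref{prop:nefdirectsum} uses hard Lefschetz injectivity of $h+\ell_r$ on $A$ and the identification $B/\ann(\zeta-\ell_r)$. For modules, these must be replaced by the corresponding quotient modules, and one must verify that the induced Hermitian form on each quotient is again a complex Lefschetz module. I would handle this by citing \cite[Lemma 4.5]{LefschetzModule} in its complex form, and then rerunning the two-sided sign argument with $\mathcal{Q}(b, (\cdot) b)$ in place of $\deg_B(b^2\cdot)$. That argument works because each term is real by Hermitian symmetry.
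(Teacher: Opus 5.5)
Your proposal is correct and follows the route the paper intends: the paper's proof consists only of the remark that the argument for Theorem~\ref{thm:main} adapts. Your outline is exactly that adaptation: the form on $M\otimes_A C$ defined through $p_*$, descent along kernels of multiplication in place of quotients by annihilators, module versions of Theorem~\ref{thm:multiproj} and Proposition~\ref{prop:nefdirectsum}, and then the same induction on $t$. As minor points, write the form as $\tilde{\mathcal{Q}}(\zeta_1^{i_1}\cdots\zeta_m^{i_m}x,\zeta_1^{j_1}\cdots\zeta_m^{j_m}y)=\mathcal{Q}(p_*(\zeta_1^{i_1+j_1}\cdots\zeta_m^{i_m+j_m})x,y)$ for $x,y\in M$, since $\mathcal{Q}(p_*(x\overline{y}))$ multiplies two module elements, and avoid using $m$ both for $\ell(\lambda)$ and for the element of $M^k$.
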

One can similarly prove an analogue of Theorem~\ref{thm:modulestrict} for complex Lefschetz modules; see \cite[Example 3.11]{LefschetzModule}. The main case of interest for Theorem~\ref{thm:complexmodule} is the following. Let $X$ be a smooth complex projective variety, and let $H$ be the ring of real $(p, p)$ classes on $X$. For some nonnegative integer $k \le \dim X$, set
$$M = \bigoplus_{p \ge 0} H^{p+k, p}(X;\mathbb{C}).$$
Then $M$ is a complex Lefschetz module of degree $\dim X - k$ over $H$ with respect to the ample cone. If $E$ is a nef vector bundle on $X$, then the ring $B$ considered in Theorem~\ref{thm:complexmodule} is equal to the ring of real $(p, p)$ classes on $\mathbb{P}_X(E)$, and the module $\tilde{M}$ is equal to $\bigoplus_{p \ge 0} H^{p + k, p}(\mathbb{P}_X(E); \mathbb{C})$. Because $E$ is nef, $\tilde{M}$ is a complex Lefschetz module over $(B, \mathcal{K}_B)$.

\subsection{Multiple projective bundle rings}

There is a version of our main results for multiple projective bundle rings. This extends results in \cite[Section~3.2]{FultonLaz} and \cite[Section~9.2]{RT1}. The following results can also be extended to Lefschetz modules and complex Lefschetz modules as in Section~\ref{ss:Lefschetz} and Section~\ref{ss:complexLefschetz} respectively.

\begin{theorem}\label{thm:mainmultiple}
	Let $A = A^0 \oplus \dotsb \oplus A^n$ be an algebra with the K\"{a}hler package with respect to a cone $\mathcal{K}_A$. 
	For some $r_1,\ldots,r_q \ge 1$, choose classes 
	$c_{j,i} \in A^i$ for $1 \le i \le r_j$ and $1 \le j \le q$. 
	For $1 \le j \le q$, let $B_j = A[\zeta]/(\zeta^{r_j} - c_{j,1} \zeta^{r_j - 1} + \dotsb + (-1)^{r_j} c_{j, r_j})$, and suppose that  $B_j$ has the K\"{a}hler package with respect to $\mathcal{K}_{B_j}$. 
	Let $k \le n/2$ be a nonnegative integer. 
	Let $\lambda_1,\ldots,\lambda_q$ be partitions with $|\lambda_1| + \cdots + |\lambda_q| = n - 2k$, and  
	consider an element $a \in A^k$.
	Fix $h_1,\ldots,h_q \in \K_A$ and, for $1 \le j \le q$, consider  the rectangular decomposition of $\lambda_j$ with corresponding rectangles $R_{j,1},\ldots,R_{j,t_j}$ as defined in \eqref{eq:rectangledef}.
	Suppose that for any partitions $\mu_1,\ldots,\mu_q$ with $\lambda_j \subseteq \mu_j$ and $\ell(\lambda_j) = \ell(\mu_j)$ for $1 \le j \le q$,
	\begin{equation}\label{eq:multiplecondition}
		a \hat{s}_{\mu_1} \cdots \hat{s}_{\mu_q} = 0,
	\end{equation}
	where
	\[
	\hat{s}_{\mu_j} = \begin{cases}
		h_j s_{\mu_j} +   \sum_{i = 1}^{t_j} i \sum\limits_{ \substack{ \nu \supseteq \mu_j \\ |\nu| = |\mu_j| + 1 \\ \nu \smallsetminus \mu_j \in R_{j,i}}} s_\nu
		&\textrm{if } \mu_j \subseteq \lambda_j \cup R_{j,1} \cup \cdots \cup R_{j,t_j}, 
		\\
		s_{\mu_j}
		&\textrm{otherwise, } 
	\end{cases}
	\]
	for $1 \le j \le q$.
	Then 
	\[
	(-1)^k\deg_A(a^2 s_{\lambda_1}\cdots s_{\lambda_q}) \ge 0,
	\]
	with equality if and only if $a s_{\mu_1} \cdots s_{\mu_q} = 0$ in $A$  
	for any partitions $\mu_1,\ldots,\mu_q$ with $\lambda_j \subseteq \mu_j$  for $1 \le j \le q$.
\end{theorem}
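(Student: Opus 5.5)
We reduce Theorem~\ref{thm:mainmultiple} to the single-bundle case, Theorem~\ref{thm:mainmod}, by induction on $q$, peeling off one bundle at a time via multiprojective bundle rings and the descent lemma. The case $q=1$ is exactly Theorem~\ref{thm:mainmod}.

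For the inductive step, set $m=\ell(\lambda_q)$; if $m>r_q$ everything vanishes and we are done. Otherwise let $C$ be the $m$-fold multiprojective bundle ring built from $c_{q,1},\dots,c_{q,r_q}$. By Theorem~\ref{thm:multiproj}, $C$ has the K\"ahler package with respect to $\mathcal{K}_C$. Proposition~\ref{prop:pushforward} gives
$$(-1)^{\binom m2}m!\,\deg_A(a^2 s_{\lambda_1}\cdots s_{\lambda_q}) = \deg_C\big((a\Delta)^2 s_{\lambda_1}\cdots s_{\lambda_{q-1}}(\zeta_1\cdots\zeta_m)^{r_q-m}s_{\lambda_q}(\zeta)\big).$$
Splitting $\lambda_q=\widehat\lambda_{q,1}+\overline{\lambda}_q$ and passing to $D=C/\operatorname{ann}((\zeta_1\cdots\zeta_m)^{r_q-m+k_1})$, this becomes $\deg_D$ of $(a\Delta)^2 s_{\lambda_1}\cdots s_{\lambda_{q-1}} s_{\overline\lambda_q}(\zeta)$. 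By Proposition~\ref{prop:descent}, $D$ has the K\"ahler package.

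We now have $q$ projective bundle rings over $D$:
\begin{itemize}
\item[(i)] the base changes of $B_1,\dots,B_{q-1}$ to $D$;
\item[(ii)] $D[y]/\prod(y-\zeta_i)$.
\end{itemize}
We check that each satisfies the K\"ahler package over $D$. For (ii) this is Proposition~\ref{prop:nefdirectsum}. For (i), the ring $B_j\otimes_A D$ is a quotient of the multiprojective ring over $A$ combining $B_j$ with the $m$ copies of $B_q$. By Remark~\ref{rem:multiple}, that ring has the K\"ahler package, and descent along $(\zeta_1\cdots\zeta_m)^{r_q-m+k_1}$ then transfers it to $B_j\otimes_A D$.

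This is a $q$-bundle situation over $D$ with $\lambda_q$ replaced by $\overline\lambda_q$. We induct on the total number of rectangular blocks $\sum_j t_j$ rather than on $q$; when $\overline\lambda_q=\emptyset$, the $q$th factor disappears. We use $h_q+s_1(\zeta)$ in place of $h_q$ for the new factor, and keep $h_1,\dots,h_{q-1}$ unchanged.

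The main obstacle is transferring the hypothesis \eqref{eq:multiplecondition} to $a\Delta$ in $D$. As in the proof of Theorem~\ref{thm:mainmod}, we write the required vanishing in $C$ and apply Lemma~\ref{lem:detectzeroupstairs}. We use it in a form twisted by the extra factors $\hat s_{\mu_1}\cdots\hat s_{\mu_{q-1}}$; these are pulled back from $A$, so the same proof applies. This turns the vanishing into statements of the form
$$a\,\hat s_{\mu_1}\cdots\hat s_{\mu_{q-1}}\big(h_qs_{\mu_q}+L_{\lambda_q}(s_{\mu_q})+r_{\mu_q}\big)\star_m s_{\nu'}=0.$$
The correction term $r_{\mu_q}$ and the $\star_m s_{\nu'}$ factor are then handled by Lemma~\ref{lem:commute}, applied in the $q$th slot with the other factors fixed. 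Here the $a$-annihilation of the ideal $E_{\lambda_q}^{(\le m)}$ must hold for each fixed tuple $\mu_1,\dots,\mu_{q-1}$. This comes from the ``otherwise'' case of \eqref{eq:multiplecondition} combined with the module structure in the $q$th slot.

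The delicate point is that the hypothesis is only a product condition, not a slotwise one. We must check that Lemma~\ref{lem:commute} applies with $a\hat s_{\mu_1}\cdots\hat s_{\mu_{q-1}}$ playing the role of $a$. This element kills $E^{(\le m)}_{\lambda_q}$, since each $s_\mu$ in that ideal falls under the second case of the definition of $\hat s_{\mu_q}$. So the argument goes through.

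Finally, the equality case follows exactly as in Theorem~\ref{thm:mainmod}. Induction gives $a\Delta\, s_{\mu_1}\cdots s_{\mu_{q-1}}s_{\overline\mu_q}(\zeta)=0$ in $D$. Pushing down via Lemma~\ref{lem:detectzeroupstairs} and then applying Lemma~\ref{lem:idealgenerated} yields $a s_{\mu_1}\cdots s_{\mu_q}=0$ for all $\mu_j\supseteq\lambda_j$. The sign bookkeeping is identical to the single-bundle case.
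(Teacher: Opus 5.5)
Your route is the paper's: peel the first rectangular block off one $\lambda_j$ using the $\ell(\lambda_j)$-fold multiprojective ring of $B_j$, pass to $D$ by Proposition~\ref{prop:descent}, and keep the other $B_{j'}$ K\"ahler over $D$ via Remark~\ref{rem:multiple} and descent. Then absorb the other slots into $a$ when applying Lemmas~\ref{lem:detectzeroupstairs} and~\ref{lem:commute}. Peeling $\lambda_q$ rather than $\lambda_1$ is immaterial, and those individual steps are fine.

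The induction breaks where you say that when $\overline{\lambda}_q=\emptyset$ ``the $q$th factor disappears.'' It does not. For $\overline{\lambda}_q=\emptyset$ the transferred hypothesis in $D$ is
\[
a\Delta\,\hat s_{\mu_1}\cdots\hat s_{\mu_{q-1}}\bigl(h_q+s_1(\zeta_1,\ldots,\zeta_m)\bigr)=0,
\]
whereas the $(q-1)$-bundle statement over $D$ needs $a\Delta\,\hat s_{\mu_1}\cdots\hat s_{\mu_{q-1}}=0$. The K\"ahler factor cannot be cancelled. If you keep the empty slot instead, the induction bottoms out with all partitions empty and hypothesis $a'h_1'\cdots h_q'=0$. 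That does not imply the Hodge--Riemann relations once $q\ge 2$.

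This is not a fixable detail. The product hypothesis \eqref{eq:multiplecondition} lives in degree $n-k+q$ rather than $n-k+1$, and the statement as written is already false for $q=2$, $k=1$. Take:
\begin{itemize}
\item $A=\R[h]/(h^5)$ with $\deg_A(h^4)=1$ and $\K_A=\R_{>0}h$;
\item $q=2$, $r_1=r_2=1$, $c_{1,1}=c_{2,1}=h$, so $B_j\cong A$ and $\K_{B_j}=\K_A$;
\item $h_1=h_2=h$, $k=1$, $\lambda_1=\lambda_2=(1)$.
\end{itemize}
Then $s_{(p)}=h^p$ and $\hat s_{(p)}=2h^{p+1}$ for both bundles. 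So every $a\hat s_{\mu_1}\hat s_{\mu_2}$ with $a\in A^1$ has degree at least $5$, and the hypothesis holds for all $a\in A^1$. But $a=h$ gives $(-1)^1\deg_A(h^2\cdot h\cdot h)=-1<0$. More generally, for $k=1$ and any $q\ge2$ the hypothesis is vacuous, so the statement would force $(x,y)\mapsto\deg_A(xy\,s_{\lambda_1}\cdots s_{\lambda_q})$ to be negative semidefinite on $A^1$. The paper's sketch, which proceeds exactly as you do, cannot establish it either; the case $k=0$ (Example~\ref{ex:FLmultiple}) is unaffected.

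What your peeling argument does prove is the version with a single $h\in\K_A$ and a Leibniz-type condition of degree $n-k+1$. For $\mu_j\supseteq\lambda_j$ with $\ell(\mu_j)=\ell(\lambda_j)$, require $a\,s_{\mu_1}\cdots s_{\mu_q}=0$ if some $\mu_j\not\subseteq\lambda_j\cup R_{j,1}\cup\cdots\cup R_{j,t_j}$, and otherwise
\[
a\Bigl(h\,s_{\mu_1}\cdots s_{\mu_q}+\sum_{j=1}^{q}\Bigl(\prod_{j'\neq j}s_{\mu_{j'}}\Bigr)L_{\lambda_j}(s_{\mu_j})\Bigr)=0,
\]
where slot $j$ uses the Schur classes of $B_j$. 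With this hypothesis your steps go through:
\begin{itemize}
\item peeling a block replaces $h$ by $h+s_1(\zeta)$;
\item an emptied slot contributes $s_\emptyset=1$ and $L_\emptyset=0$, so it really can be dropped;
\item the other slots enter Lemmas~\ref{lem:detectzeroupstairs} and~\ref{lem:commute} only as coefficients in $A$, killing $E^{(\le m)}_{\lambda_j}$ via the outside conditions;
\item the base case is exactly Hodge--Riemann for $a'h'=0$.
\end{itemize}
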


The proof is via induction and is a slight adaptation of the proof of Theorem~\ref{thm:main}. In more detail,  assume that $\lambda_1$ is nonempty, and set $m = \ell(\lambda_1)$, $k_1 = |\widehat{\lambda}_{1,1}|/m$, and consider the $m$-fold projective bundle 
$$C = \frac{A[\zeta_1, \dotsc, \zeta_m]}{(\zeta_1^{r_1} - c_{1,1} \zeta_1^{r_1-1} + \dotsb + (-1)^{r_1} c_{1,r_1}, \dotsc, \zeta_m^{r_1} - c_{1,1} \zeta_{m}^{r_1-1} + \dotsb + (-1)^{r_1} c_{1,r_1})},$$
and $D = C/\ann((\zeta_1 \cdots \zeta_m)^{r_1 - m + k_1})$. Then Remark~\ref{rem:multiple} and Proposition~\ref{prop:descent} imply that
$B_j' = D[\zeta]/(\zeta^{r_j} - c_{j,1} \zeta^{r_j - 1} + \dotsb + (-1)^{r_j} c_{j, r_j})$ has the K\"{a}hler package with respect to $\mathcal{K}_{B_j'}$ for $2 \le j \le q$. Then we follow the steps of the proof of Theorem~\ref{thm:main},  often absorbing conditions involving $B_2,\ldots,B_q$ into $a$.
We have the following analogue of Example~\ref{ex:FL}. 

\begin{example}\label{ex:FLmultiple}
	Suppose that $k=0$, so $|\lambda_1| + \cdots + |\lambda_q| = n$.
	Then \eqref{eq:multiplecondition} is automatic because $A^s = 0$ for $s > n$. So if each $B_j$ has the K\"{a}hler package with respect to $\mathcal{K}_{B_j}$, then $\deg_A(s_{\lambda_1}\cdots s_{\lambda_q}) \ge 0$. 
\end{example}

We also have the following analogue of Theorem~\ref{thm:strict}, which generalizes \cite[Corollary 3.10]{FultonLaz}. 

\begin{theorem}\label{thm:strictmultiple}
	Let $A = A^0 \oplus \dotsb \oplus A^n$ be an algebra with the K\"{a}hler package with respect to a cone $\mathcal{K}_A$. 
	For some $r_1,\ldots,r_q \ge 1$, choose classes 
	$c_{j,i} \in A^i$ for $1 \le i \le r_j$ and $1 \le j \le q$. 
	For $1 \le j \le q$, let $B_j = A[\zeta]/(\zeta^{r_j} - c_{j,1} \zeta^{r_j - 1} + \dotsb + (-1)^{r_j} c_{j, r_j})$, and suppose that  $B_j$ has the K\"{a}hler package with respect to $\mathcal{K}_{B_j}$, and that 
	$\zeta$ satisfies the hard Lefschetz property in $B_j$. 
	Then for any partitions $\lambda_1,\ldots,\lambda_q$ with 
	$|\lambda_1| + \cdots + |\lambda_q| = n$ and $\ell(\lambda_j) \le r_j$ for $1 \le j \le q$, we have $\deg_A(s_{\lambda_1} \cdots s_{\lambda_q}) > 0$. 
	\end{theorem}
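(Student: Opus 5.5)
We prove Theorem~\ref{thm:strictmultiple} by the perturbation argument used for Theorem~\ref{thm:strict}, with Example~\ref{ex:FLmultiple} (that is, Theorem~\ref{thm:mainmultiple} with $k=0$) in place of Example~\ref{ex:FL}. Fix $h \in \mathcal{K}_A$. For each $j$, $\zeta$ has the hard Lefschetz property in $B_j$, and this is an open condition, so $\zeta - \varepsilon h$ also has it for small $\varepsilon>0$. Proposition~\ref{prop:convex} then gives an open convex cone containing $\mathcal{K}_{B_j}$ and $\zeta - \varepsilon h$ on which $B_j$ has the K\"ahler package. We choose a single $\varepsilon$ that works for all $j = 1, \dotsc, q$. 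Twisting by $-\varepsilon h$ produces rings $B_{j,\varepsilon}$ built from the classes $c_{j,i}\langle -\varepsilon h\rangle$. Each $B_{j,\varepsilon}$ has the K\"ahler package with respect to $\pi^*(\mathcal{K}_A)+\mathbb{R}_{>0}\zeta$: under $\zeta\mapsto\zeta-\varepsilon h$ this cone maps into the enlarged cone.

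Next we show that for all partitions $\mu_1,\dotsc,\mu_q$ with $\sum|\mu_j| \le n$ and $\ell(\mu_j)\le r_j$,
\[
\deg_A\bigl(s_{\mu_1}\langle -\varepsilon h\rangle \cdots s_{\mu_q}\langle -\varepsilon h\rangle\, h^{n-\sum|\mu_j|}\bigr)\ge 0.
\]
Pass to $A' = A/\operatorname{ann}(h^{n-\sum|\mu_j|})$. By Proposition~\ref{prop:descent} it has the K\"ahler package with respect to the image of $\mathcal{K}_A$. The projective bundle rings over $A'$ built from the twisted classes are the quotients of the $B_{j,\varepsilon}$ by the annihilators of the pullback of $h^{n-\sum|\mu_j|}$, so they also have the K\"ahler package by Proposition~\ref{prop:descent}. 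Example~\ref{ex:FLmultiple}, applied over $A'$, now gives the inequality.

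Expand each factor using \eqref{eq:twist} for the rank $r_j$, with $\delta = -\varepsilon h$ inverted:
\[
s_{\lambda_j} = s_{\lambda_j}\langle -\varepsilon h\rangle + \sum_{i\ge 1}\varepsilon^i h^i\sum_{|\mu|=|\lambda_j|-i} a^{(j)}_\mu s_\mu\langle -\varepsilon h\rangle .
\]
Here all $a^{(j)}_\mu\ge0$ (Corcorteel--Kim) and $a^{(j)}_\emptyset = s_{\lambda_j}(1,\dotsc,1)>0$ because $\ell(\lambda_j)\le r_j$. Only $\mu$ with $\ell(\mu)\le r_j$ appear, since $s_\mu(x_1,\dotsc,x_{r_j})=0$ otherwise. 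Multiply the $q$ expansions and apply $\deg_A$. Each term has the form $\varepsilon^{N} h^{N}$ times a product of twisted Schur classes, with a nonnegative coefficient, so it has nonnegative degree by the previous step. The term taking the $h^{|\lambda_j|}$ part from every factor is $\varepsilon^n\prod_j a^{(j)}_\emptyset\, h^n$. Its degree is strictly positive, since $\deg_A(h^n)>0$ by the Hodge--Riemann relations in degree $0$. Hence $\deg_A(s_{\lambda_1}\cdots s_{\lambda_q})>0$.

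The main obstacle is the first step: checking that the twisted rings really have the K\"ahler package on the full cone $\pi^*(\mathcal{K}_A)+\mathbb{R}_{>0}\zeta$. This amounts to showing that $\zeta+g$ lies in the enlarged convex cone for all $g\in\mathcal{K}_A$ once $\zeta-\varepsilon h$ does. The cleanest route is to observe that $\zeta + g$ is a convex combination of $\zeta - \varepsilon h$ and $\zeta + g'$ with $g' \in \mathcal{K}_A$ large. The sharing of $\varepsilon$ across the $q$ bundles is harmless because there are finitely many.
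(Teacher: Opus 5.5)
Your proposal is correct and is essentially the paper's argument: the paper proves this theorem by repeating the proof of Theorem~\ref{thm:strict} with each $\zeta$ twisted by $-\varepsilon h$. As you do, it gets nonnegativity of the twisted terms from Example~\ref{ex:FLmultiple} after descending to $A/\operatorname{ann}(h^{N})$, and strict positivity from the $\varepsilon^n h^n$ term of the expansion \eqref{eq:twist}.

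The only slip is notational, in your last paragraph. The element that must lie in the enlarged cone of $B_j$ is $\zeta-\varepsilon h+g$ (the image of $\zeta+g\in B_{j,\varepsilon}$), not $\zeta+g$. It equals $s(\zeta-\varepsilon h)+(1-s)(\zeta+g')$ with $g'=g/(1-s)-\varepsilon h$, and $g'\in\mathcal{K}_A$ for $s$ close to $1$ because $\mathcal{K}_A$ is open.
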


The proof follows just as  the proof of Theorem~\ref{thm:strict}, by twisting each $\zeta$ by $-\epsilon h$ for some $h \in \K_A$.

\subsection{Lorentzian polynomials}

Given an algebra $A = A^0 \oplus \dotsb \oplus A^n$ which satisfies Poincar\'{e} duality and is equipped with a nonempty open convex cone $\mathcal{K}_A \subseteq A^1$, we say that it satisfies the mixed Hodge--Riemann relations in degree at most $1$ if
\begin{enumerate}
\item For any $\ell_1, \dotsc, \ell_n$ in $\mathcal{K}_A$, we have $\deg_A(\ell_1 \dotsb \ell_n) > 0$. 
\item For any $\ell_1, \dotsc, \ell_{n-2}$ in $\mathcal{K}_A$, the bilinear form $A^1 \times A^1 \to \mathbb{R}$ given by $(x, y) \mapsto \deg_A(\ell_1 \dotsb \ell_{n-2} xy)$ is nondegenerate and has exactly one positive eigenvalue. 
\end{enumerate}

If $A$ has the K\"{a}hler package with respect to $\mathcal{K}_A$, then it follows from Proposition~\ref{prop:descent} 
that $A$ has the mixed Hodge--Riemann relations in degree at most $1$; see \cite{Cat08}.  Whether $A$ has the mixed Hodge--Riemann relations in degree at most $1$ can be understood in terms of the theory of \emph{Lorentzian polynomials on cones}, developed in \cite{LorentzianCone} after earlier work in \cite{BH}. Choose a spanning set $x_1, \dotsc, x_d$ for $A^1$. Then $A$ has the mixed Hodge--Riemann relations in degree at most $1$ if and only if the polynomial
$$(y_1, \dotsc, y_d) \mapsto \deg_A \left( \left(\sum_{i=1}^{d} y_i x_i \right)^n \right)$$
is Lorentzian with respect to the preimage of $\mathcal{K}_A$ under the linear map $\mathbb{R}^d \to {A}^1$ induced by the choice of $x_1, \dotsc, x_d$; see \cite[Section 2]{LorentzianCone} and \cite[Theorem 3.8]{MNY}. Importantly, it follows from these results that the second condition in the definition of the mixed Hodge--Riemann relations in degree at most $1$ can be relaxed to 
\begin{enumerate}
\item[(2')]  For any $\ell_1, \dotsc, \ell_{n-2}$ in $\mathcal{K}_A$, the bilinear form $A^1 \times A^1 \to \mathbb{R}$ given by $(x, y) \mapsto \deg_A(\ell_1 \dotsb \ell_{n-2}x y )$ has at most one positive eigenvalue. 
\end{enumerate}

\begin{proposition}\label{prop:annschur}
Suppose that $B$ has the K\"{a}hler package with respect to $\mathcal{K}_B$. Then for any partition $\lambda$, $A/\operatorname{ann}(s_{\lambda})$ has the mixed Hodge--Riemann relations in degree at most $1$ with respect to the image of $\mathcal{K}_A$. 
\end{proposition}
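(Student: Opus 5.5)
The plan is to reduce both conditions in the definition of the mixed Hodge--Riemann relations in degree at most $1$ to the cases $k=0$ and $k=1$ of Theorem~\ref{thm:main}, applied after cutting down by products of ample classes. Set $N = n - |\lambda|$ and $A' = A/\operatorname{ann}(s_\lambda)$. By the discussion before Proposition~\ref{prop:descent}, $A'$ satisfies Poincar\'{e} duality in top degree $N$, with $\deg_{A'}(y) = \deg_A(s_\lambda y)$; if $s_\lambda = 0$ then $A'$ is the zero ring and there is nothing to prove. By the discussion preceding Proposition~\ref{prop:annschur}, it then suffices to check condition~(1) and the relaxed condition~(2').

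\textbf{Cutting down.} Fix $\ell_1, \dotsc, \ell_j \in \mathcal{K}_A$ and set $A_\ell = A/\operatorname{ann}(\ell_1 \dotsb \ell_j)$. By Proposition~\ref{prop:descent}, $A_\ell$ has the K\"{a}hler package with respect to the image of $\mathcal{K}_A$. Each $\pi^*\ell_i$ lies in $\overline{\mathcal{K}}_B$, so Proposition~\ref{prop:descent} also shows that $B/\operatorname{ann}(\pi^*(\ell_1\dotsb\ell_j))$ has the K\"{a}hler package with respect to the image of $\mathcal{K}_B$. Using the decomposition \eqref{eq:Bdirectsum} (an element is zero iff all its pushforwards $\pi_*(\cdot\,\zeta^i)$ vanish), I expect to identify this quotient with the projective bundle ring $A_\ell[\zeta]/(\zeta^r - c_1\zeta^{r-1} + \dotsb + (-1)^r c_r)$ built from the images of the $c_i$. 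Its Schur classes are the images of the $s_\mu$. Hence Theorem~\ref{thm:main} applies over $A_\ell$, and $\deg_{A_\ell}(y) = \deg_A(\ell_1\dotsb\ell_j y)$.

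\textbf{Condition (2').} Take $j = N-2$, so that $A_\ell$ has top degree $|\lambda| + 2$. Example~\ref{ex:rossToma} (the case $k=1$ of Theorem~\ref{thm:main}) shows that $(x,y) \mapsto \deg_{A_\ell}(xys_\lambda)$ on $A_\ell^1$ has at most one positive eigenvalue. Pulling back along $A^1 \to A_\ell^1$, and then pushing down to $A'^1$, cannot increase the number of positive eigenvalues. Therefore $(x,y) \mapsto \deg_A(\ell_1\dotsb\ell_{N-2}\,xy\,s_\lambda)$ has at most one positive eigenvalue on $A'^1$, which is (2').

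\textbf{Condition (1), the main obstacle.} Take $j = N$. Example~\ref{ex:FL} gives $\deg_A(\ell_1\dotsb\ell_N s_\lambda) \ge 0$, and the equality case of Theorem~\ref{thm:main} (with $a = 1$, $k = 0$) says that equality forces $\ell_1 \dotsb \ell_N s_\mu = 0$ for every $\mu \supseteq \lambda$. The hard part is upgrading this to strict positivity.

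My first attempt is to show that the polynomial $P(y) = \deg_A\big((\sum y_i x_i)^N s_\lambda\big)$ is not identically zero, and then invoke the theory of Lorentzian polynomials on cones \cite{LorentzianCone}. A nonzero polynomial that is nonnegative on the cone and satisfies the at-most-one-positive-eigenvalue condition on all its $(N-2)$-nd derivatives in cone directions should be Lorentzian, and hence strictly positive on the open cone.

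To see that $P$ is nonzero, fix $h \in \mathcal{K}_A$ and suppose $h^N s_\lambda = 0$ in $A$. I would then apply Theorem~\ref{thm:main} in $A$ with $k = (n-|\lambda|)/2$, or in the intermediate cut-downs $A_\ell$ where parity requires it. The test element would be $a = h^{N/2}$, or a primitive element built from such powers, and the hypotheses \eqref{eq:primitive} should be forced by the vanishing of $h^N s_\mu$. The aim is to deduce that $s_\lambda$ annihilates everything in complementary degree, i.e.\ $s_\lambda = 0$ by Poincar\'{e} duality, which contradicts $A' \neq 0$. If this route stalls, the fallback is to perturb as in the proof of Theorem~\ref{thm:strict}: twist the $c_i$ by $-\varepsilon h$ and extract a strictly positive leading term $a_\emptyset \varepsilon^N \deg_A(h^N s_\lambda)$-type contribution.
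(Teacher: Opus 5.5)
Your cut-down step and your proof of (2') are fine, and they agree with the paper. The paper applies Example~\ref{ex:rossToma} on $A/\operatorname{ann}(\ell_1\dotsb\ell_{N-2})$ to the twisted classes $s_\lambda\langle\varepsilon h\rangle$; applying it to $s_\lambda$ itself works just as well.

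The gap is condition (1): neither of your routes proves $\deg_A(\ell_1\dotsb\ell_N s_\lambda)>0$.

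\emph{First route.} The hypotheses of Theorem~\ref{thm:main} for $a=h^{N/2}$ are identities in $A$, typically in degrees below $n$: $h^{N/2}(hs_\mu+L_\lambda(s_\mu))=0$, and $h^{N/2}s_\mu=0$ for $\mu$ outside. They do not follow from the single top-degree vanishing $h^Ns_\lambda=0$, and you give no way to produce a test element satisfying them. Even granting them, the equality clause only gives $as_\mu=0$ for the particular $a$ you feed in, which is built from $A^1$. This does not in general force $s_\lambda=0$: Poincar\'e duality tests $s_\lambda$ against all of $A^N$, and $A$ need not be generated in degree $1$.

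\emph{Fallback.} This route is not available at all. The $-\varepsilon h$ twist in the proof of Theorem~\ref{thm:strict} keeps the K\"ahler package only because $\zeta$ is assumed Lefschetz there; that is what lets Proposition~\ref{prop:convex} push the cone past $\zeta$. Under the present hypothesis the same argument would give $\deg_A(h^Ns_\lambda)>0$ for every $\lambda$ with $\ell(\lambda)\le r$. That is false for $c_1=\dotsb=c_r=0$, where $B$ still has the K\"ahler package but $s_\lambda=0$. Also, the term it isolates from \eqref{eq:twist} is $a_\emptyset\varepsilon^{|\lambda|}\deg_A(h^n)$, which does not involve $s_\lambda$.

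The idea you are missing is to twist the other way. For $\varepsilon>0$, the ring built from $c_i\langle\varepsilon h\rangle$ is $B$ with $\zeta$ replaced by $\zeta+\varepsilon h\in\mathcal{K}_B$. So it still has the K\"ahler package with respect to the expected cone, and its $\zeta$ is now Lefschetz. Theorem~\ref{thm:strict} on the cut-downs $A/\operatorname{ann}(\ell_1\dotsb\ell_N)$ (via Proposition~\ref{prop:descent}) then gives $\deg_A(\ell_1\dotsb\ell_N s_\lambda\langle\varepsilon h\rangle)>0$ directly. With (2') for $s_\lambda\langle\varepsilon h\rangle$, the volume polynomial of $A/\operatorname{ann}(s_\lambda\langle\varepsilon h\rangle)$ is Lorentzian with respect to $\mathcal{K}_A$. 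The paper then lets $\varepsilon\to0$, using that this class of polynomials is closed \cite{LorentzianCone}.

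Be aware of what that limit preserves: nonnegativity on the cone and the eigenvalue bound. You already get both directly from Examples~\ref{ex:FL} and~\ref{ex:rossToma}. Strict positivity of the limit follows only once it is not identically zero, since a nonzero multilinear form that is nonnegative on an open cone is positive there. So the non-vanishing of $h^Ns_\lambda$ that you isolated is exactly what the closedness step has to cover. The paper does not argue it separately, and your proposed argument for it does not go through.
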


\begin{proof}
We may assume that $|\lambda| \le n$, else $A/\operatorname{ann}(s_{\lambda})$ is the zero ring. 
Choose some $h \in \mathcal{K}_A$. With the notation of Section~\ref{sec:strict}, we prove that, for any $\varepsilon > 0$, $A/\operatorname{ann}(s_{\lambda}\langle \varepsilon h \rangle)$ satisfies the mixed Hodge--Riemann relations in degree at most $1$. As the space of polynomials which are Lorentzian with respect to $\mathcal{K}_A$ is closed \cite[Remark 2.5]{LorentzianCone}, this implies the result. 

To check the first condition in the definition of the mixed Hodge--Riemann relations in degree at most $1$, we need to show that for any $\ell_1, \dotsc, \ell_{n - |\lambda|}$ in $\mathcal{K}_A$, we have 
$$\deg_A(\ell_1 \dotsb \ell_{n-|\lambda|} s_{\lambda} \langle \varepsilon h \rangle) = \deg_{A/\operatorname{ann}(\ell_1 \dotsb \ell_{n-|\lambda|})} (s_{\lambda} \langle \varepsilon h \rangle)> 0.$$
This follows from Proposition~\ref{prop:descent} and Theorem~\ref{thm:strict}. 
To check the second condition, note that Proposition~\ref{prop:descent} implies that Theorem~\ref{thm:main} can be applied to $A/\operatorname{ann}(\ell_1 \dotsb \ell_{n - |\lambda| - 2})$.
Example~\ref{ex:rossToma} then 
 gives that the form $A^1 \times A^1 \to \mathbb{R}$ given by $(a, b) \mapsto \deg_A(ab \ell_1 \dotsb \ell_{n - |\lambda| - 2} s_{\lambda} \langle \varepsilon h \rangle)$ has at most one positive eigenvalue. This form then descends to a form on the degree $1$ part of $A/\operatorname{ann}(s_{\lambda} \langle \varepsilon h \rangle)$ which has at most one positive eigenvalue. 
\end{proof}

\begin{example}\label{ex:denormalizedSchur}
Let $\lambda$ be a partition with at most $m$ parts whose largest part has size at most $n$. 
Let $A = \mathbb{R}[x_1, \dotsc, x_m]/(x_1^{n+1}, \dotsc, x_m^{n+1})$ and let $\K_A = \mathbb{R}_{>0} x_1 + \dotsb + \mathbb{R}_{>0} x_m$. Then $A$ has the K\"{a}hler package with respect to $\K_A$ (for example, by Proposition~\ref{prop:civanishexample}).
Let $c_1, \dotsc, c_m$ be classes defined by the property that
$$1 + c_1 + \dotsb + c_m = (1 + x_1) \dotsb (1 + x_m),$$
and $c_i \in A^i$. 
By Proposition~\ref{prop:nefdirectsum}, the corresponding projective bundle ring $B$ has the K\"{a}hler package with respect to $\mathcal{K}_B$. Proposition~\ref{prop:annschur} then implies that $A/\operatorname{ann}(s_{\lambda})$ has the mixed Hodge--Riemann relations in degree at most $1$, so the polynomial 
$$(y_1, \dotsc, y_m) \mapsto \deg_A\left( \left(y_1 x_1 + \dotsb + y_m x_m\right )^{nm - |\lambda|} s_{\lambda} \right)$$
is Lorentzian. As observed in \cite[Proof of Theorem 3]{HMMS}, this fact, applied to all partitions $\lambda$, implies that Schur polynomials are denormalized Lorentzian, in the sense of \cite{BH}. 
This result was originally proven in \cite{HMMS} and a second proof was given in \cite[Section 10.3]{RT2}. Unlike previous proofs of this fact, the above argument does not use algebraic geometry. However, all three arguments are based on establishing the mixed Hodge--Riemann relations in degree at most $1$ for $A/\operatorname{ann}(s_{\lambda})$. 
\end{example}

The following example, which is based on \cite[Example 9.2]{RT1}, shows that Proposition~\ref{prop:annschur} cannot be strengthened to the statement that $A/\operatorname{ann}(s_{\lambda})$ has the K\"{a}hler package. 

\begin{example}
Let $m = 3$ and $n = 2$ in 	Example~\ref{ex:denormalizedSchur}. Then 
$A = \mathbb{R}[x_1, x_2, x_3]/(x_1^3, x_2^3, x_3^3)$ 	has the K\"{a}hler package with respect to $\mathcal{K}_A = \R_{>0} x_1 + \R_{>0} x_2 + \R_{>0} x_3$.  Here $A$ is the cohomology ring of $\mathbb{P}^2 \times \mathbb{P}^2 \times \mathbb{P}^2$, and has Hilbert function $(1, 3, 6, 7, 6, 3, 1)$. 
We have $c_1 = x_1 + x_2 + x_3$, $c_2 = x_1x_2 + x_1x_3 + x_2x_3$, $c_3 = x_1x_2x_3$, and  $B = A[\zeta]/(\zeta^3 - c_1 \zeta^2 + c_2 \zeta - c_3)$ has the K\"{a}hler package with respect to $\mathcal{K}_B$.

Let $\lambda = (1, 1)$, so $s_{\lambda} = c_2$. Then Proposition~\ref{prop:annschur} implies that $\bar{A} = A/\operatorname{ann}(c_2)$ satisfies the mixed Hodge--Riemann relations in degree at most $1$ with respect to the image of $\mathcal{K}_A$. The Hilbert function of $\bar{A}$ is $(1, 3, 6, 3, 1)$. Using the basis $x_1x_2, x_1x_3, x_2x_3, x_1^2, x_2^2, x_3^2$ for $\bar{A}^2$, the Poincar\'{e} pairing on $\bar{A}^2$ is represented by the matrix
\[
\begin{pmatrix}
	0 & 1 & 1 & 0 & 0  & 1  \\ 
	1 & 0 & 1 & 0 & 1  & 0  \\ 
	1 & 1 & 0 & 1 & 0  & 0  \\ 
	0 & 0 & 1 & 0 & 0  & 0  \\ 
	0 & 1 & 0 & 0 & 0  & 0  \\ 
	1 & 0 & 0 & 0 & 0  & 0  \\ 
\end{pmatrix}.
\]
This matrix is nondegenerate and has signature $0$, i.e., it has $3$ positive eigenvalues and $3$ negative eigenvalues. If $\bar{A}$ satisfied the Hodge--Riemann relations, then this matrix would have 
signature $2$. 

Taking $h = x_1 + x_2 + x_3$, Theorem~\ref{thm:main} states that the form $(x, y) \mapsto \deg_A(xy c_2)$ is positive semidefinite on the subspace
$$\{a \in A^2 : a(h c_2 + s_{2,1}) = a(hs_{2,1} + s_{2,2} + s_{3,1}) = 0\}$$
of $A^2$. This subspace has dimension $2$, and the restriction of the form to this subspace is positive definite. 
\end{example}

\subsection{Strict positivity}

In this final section, we conjecture that a version of Theorem~\ref{thm:strict} holds in the setting of Theorem~\ref{thm:main}, giving a stronger version of the Hodge--Riemann property. We prove this conjecture in some special cases. We assume the setup of Theorem~\ref{thm:main} throughout.

\begin{conjecture}\label{q:strict}
	Assume the hypotheses of Theorem~\ref{thm:main}. 	Further assume that $\zeta$ satisfies the hard Lefschetz property in $B$ and that $\lambda$ has at most $r$ parts. 
	Then $a \neq 0$ if and only if $(-1)^k\deg_A(a^2 s_\lambda) > 0$.
\end{conjecture}

Using the equality statement in Theorem~\ref{thm:main}, this is 	equivalent to the following conjecture: if $a \neq 0$, then $a s_\mu \neq 0$ in $A$ for some partition $\mu$ with $\lambda \subseteq \mu$.
When $k = 0$, the conjecture follows from Theorem~\ref{thm:strict}. When $k = 1$, by Example~\ref{ex:rossToma}, the conjecture is that the bilinear form $A^1 \times A^1 \to \mathbb{R}$ given by $(x, y) \mapsto \deg_A(xys_{\lambda} )$ is nondegenerate when  $\zeta$ satisfies the hard Lefschetz property in $B$ and $\lambda$ has at most $r$ parts. Equivalently, multiplication by $s_{\lambda}$ defines an isomorphism from $A^1$ to $A^{n-1}$. 
In the geometric setting when $A, B$ arise from a smooth complex projective variety and an ample vector bundle $E$, this conjecture was proved in \cite{RT3}*{Theorem 1.1}. Furthermore, an extension of Conjecture~\ref{q:strict} to the setting of complex Lefschetz modules would imply \cite[Theorem 1.1]{HRLuZheng}. Below we give some further cases where Conjecture~\ref{q:strict} holds. 

\begin{example}
	Suppose that $\lambda = (n - 2k)$, so that $s_\lambda = s_{n - 2k}$. We claim that Conjecture~\ref{q:strict} holds in this case. 
	Indeed, suppose that $a$ is nonzero. Since $\zeta$ satisfies the hard Lefschetz property, $a \zeta^{n + (r - 1) - 2k} \in B^{n + (r - 1) - k}$ is nonzero.
	Recall from Section~\ref{sec:projbund} that  an element $y \in B$ is zero if and only if $\pi_*(y\zeta^i) = 0$ for all nonnegative integers $i$. Using \eqref{eq:pushforward}, we deduce that there is a nonnegative integer $i$ such that $\pi_*(a \zeta^{n + (r - 1) - 2k} \zeta^i)  = a s_{n - 2k + i} \neq 0$. 
\end{example}

\begin{example}\label{ex:strictci}
	Suppose that $\lambda = (1^{n - 2k})$, so $s_\lambda = c_{n - 2k}$. We claim that Conjecture~\ref{q:strict} holds in this case. 
	Indeed, suppose that $a s_\mu = 0$ for all $\lambda \subseteq\mu$. We want to show that $a = 0$. Recall that $n - 2k \le r$ by assumption. 
	We compute in $B$:
	\[
	0 = a (\zeta^r - c_1 \zeta^{r-1} + \dotsb + (-1)^r c_r) = 
	b \zeta^{r - (n - 2k - 1)} , 
	\]
	where $b = a (\zeta^{n - 2k - 1} - c_1 \zeta^{n - 2k - 2} + \dotsb + (-1)^{n - 2k - 1} c_{n - 2k - 1} ) \in B^{n - k - 1}$. 
	On the other hand, since $n - 2k - 1 \le r - 1$, it follows from \eqref{eq:Bdirectsum} that 
	$b$ is nonzero. 
	Since $\zeta$ satisfies the hard Lefschetz property, we have 
	\[
	0 \neq b \zeta^{n + (r - 1) - 2(n - k - 1)} = b \zeta^{r - (n - 2k - 1)},
	\]
	a contradiction. 
\end{example}

\begin{example}
	Suppose that $\lambda = (2,1^{n - 2k - 2})$. We claim that Conjecture~\ref{q:strict} holds in this case. 
	Indeed, let $m = \ell(\lambda) = n - 2k - 1 \le r$
	and let $C$ be the $m$-fold multiprojective bundle ring. 	Suppose that $a s_\mu = 0$ for all $\lambda \subseteq\mu$. We want to show that $a = 0$. 
	By Lemma~\ref{lem:detectzeroupstairs}, $a \Delta (\zeta_1 \cdots \zeta_m)^{r - m} s_\lambda(\zeta_1,\ldots,\zeta_m) = 0$ in $C$. 
		Let $\widehat{\lambda}_1 = (1^{n - 2k - 1})$. 
	By Example~\ref{ex:rectangle}, $s_\lambda(\zeta_1,\ldots,\zeta_m) =
	s_{\widehat{\lambda}_1}(\zeta_1,\ldots,\zeta_m)s_1(\zeta_1,\ldots,\zeta_m) = (\zeta_1 \cdots \zeta_m) (\zeta_1 + \cdots + \zeta_m)$. 
	Let $D = C/\ann((\zeta_1 \cdots \zeta_m)^{r - m + 1})$. Then 
	$a \Delta (\zeta_1 + \cdots + \zeta_m) = 0$ in $D$. Note that the top nonzero degree of $D$ is $2(k + m(m -1)/2) + 1$. 
	
	Recall that $\zeta$ satisfies the hard Lefschetz property in $B$ by assumption. After twisting by $-\epsilon h$ for some $0 < \epsilon \ll 1$ and $h \in \K_A$ (see Section~\ref{sec:strict}), Proposition~\ref{prop:convex} implies that $C$ has the K\"{a}hler package with respect to $p^*(\K_A) + \sum_{i = 1}^m \R_{> 0} (\zeta_i - \epsilon h)$. Observe that $\zeta_1 + \cdots + \zeta_m$ lies in the latter cone. By Proposition~\ref{prop:descent}, the image of $\zeta_1 + \cdots + \zeta_m$ satisfies the hard Lefschetz property in $D$. In particular, multiplication by $\zeta_1 + \cdots + \zeta_m$ is injective on $D^{k + m(m -1)/2}$. 
	Hence the fact that $a \Delta (\zeta_1 + \cdots + \zeta_m) = 0$ in $D$ implies that 
	$a \Delta = 0$ in $D$. Equivalently, $a \Delta (\zeta_1 \cdots \zeta_m)^{r - m} s_{\widehat{\lambda}_1}(\zeta_1,\ldots,\zeta_m) = 0$ in $C$. 
	 
	By Lemma~\ref{lem:detectzeroupstairs} and Example~\ref{ex:rectangle},	$a (s_{\widehat{\lambda}_1} \star_m s_\nu) = a s_{(1^m) + \nu} = 0$ in $A$ for all partitions $\nu$  with $\ell(\nu) \le m$. By Lemma~\ref{lem:idealgenerated},  $a s_\mu = 0$ for all partitions $\mu$ containing $(1^m)$, i.e., all partitions with at least $m$ parts. 
	We now argue as in Example~\ref{ex:strictci}. 	In $B$, we have
	\[
	0 = a (\zeta^r - c_1 \zeta^{r-1} + \dotsb + (-1)^r c_r) = 
	b \zeta^{r - (m - 1)} , 
	\]
	where $b = a (\zeta^{m - 1} - c_1 \zeta^{m - 2} + \dotsb + (-1)^{m - 1} c_{m - 1} ) \in B^{m + k - 1}$ is nonzero. In particular, $b \zeta^{r - m + 2} = 0$.	On the other hand, since $\zeta$ satisfies the hard Lefschetz property, we have 
	\[
	0 \neq b \zeta^{n + (r - 1) - 2(m + k - 1)} 
	= b \zeta^{r - m + 2},
	\]
	a contradiction.
\end{example}

\bibliography{matroid}
\bibliographystyle{amsalpha}

\end{document}